\newcommandx{\unsure}[2][1=]{\todo[linecolor=red,backgroundcolor=red!25,bordercolor=red,#1]{#2}}
\newcommandx{\change}[2][1=]{\todo[linecolor=blue,backgroundcolor=blue!25,bordercolor=blue,#1]{#2}}
\newcommandx{\info}[2][1=]{\todo[linecolor=OliveGreen,backgroundcolor=OliveGreen!25,bordercolor=OliveGreen,#1]{#2}}
\newcommandx{\improvement}[2][1=]{\todo[linecolor=Plum,backgroundcolor=Plum!25,bordercolor=Plum,#1]{#2}}
\newcommandx{\thiswillnotshow}[2][1=]{\todo[disable,#1]{#2}}
\theoremstyle{remark}
\newcommand{\re}[1]{(\ref{#1})}
\newcommand{\Sum }    {\displaystyle \sum}
\newcommand{\valabs}[1]{\left\vert#1\right\vert}
\newcommand{\abs}[1]{\left\Vert#1\right\Vert}
\newcommand{\bH}{\mathbf{H}}
\newcommand{\bV}{\mathbf{V}}
\newcommand{\ve}{\mathbf{V}}
\newcommand{\MO}{\mathcal{O}}
\theoremstyle{plain}
\newtheorem{theorem}{\textbf{Theorem}}[section]
\newtheorem{lemma}[theorem]{\textbf{Lemma}}
\newtheorem{claim}[theorem]{\textbf{Claim}}
\newtheorem{proposition}[theorem]{\textbf{Proposition}}
\theoremstyle{definition}
\newtheorem{assumption}[theorem]{\textbf{Assumption}}
\newtheorem{remark}[theorem]{\textbf{Remark}}
\newtheorem{definition}[theorem]{\textbf{Definition}}
\numberwithin{equation}{section}
\numberwithin{figure}{section}
\newenvironment{prev}[1][]{\noindent\textbf{#1.} }{\ \rule{0.6em}{0.6em}}
\newcommand{\del}[1]{}
\newcommand{\lve}{\lVert}
\newcommand{\rve}{\rVert}
\newcommand{\h}{\mathrm{H}}
\newcommand{\elm}{\mathrm{L}}
\newcommand{\Pl}{\mathrm{P}_L}
\newcommand{\sobm}{\mathrm{W}}
\newcommand{\sobb}{\mathbb{W}}
\newcommand{\Hb}{\mathbb{H}}
\newcommand{\el}{\mathbb{L}}
\newcommand{\bh}{\mathbf{H}}
\newcommand{\be}{\mathbb{E}}
\newcommand{\bu}{\mathbf{u}}
\newcommand{\bv}{\mathbf{v}}
\newcommand{\bw}{\mathbf{w}}
\newcommand{\bo}{\mathcal{O}}
\newcommand{\bk}{\mathcal{R}}
\newcommand{\br}{\mathcal{K}}
\newcommand{\bec}{\mathcal{E}}
\newcommand{\eps}{\varepsilon}
\title{ On the Stochastic chemotaxis-Navier-Stokes model in 2D/3D}
\begin{document}
	\selectlanguage{english}
	\title[Semi-discrete scheme of 3D stochastic  chemotaxis-fluid model driven by transport noise]{Time discretization of a semi-discrete scheme for  3D Chemotaxis-Navier-stokes system driven by transport noise}
\author{Erika Hausenblas$^{a}$, Boris Jidjou Moghomye$^{a}$  and Paul Andr\'e Razafimandimby$^{b}$}
\dedicatory{\vspace{-10pt}\normalsize{$^{a}$  Department of Mathematics and Information Technology, Montanuniversitaet Leoben, Leoben Franz Josef Strasse 18, 8700 Leoben, Austria \\$^{b}$ School of Mathematical Sciences, Dublin City University, Collins Avenue,  Dublin $9$, Ireland 
%E-mail addresses: erika.hausenblas@unileoben.ac.at (E. Hausenblas), \\ boris.jidjou-moghomye@unileoben.ac.at (B. Jidjou Moghomye), \\ paul.razafimandimby@dcu.ie (P. A. Razafimandimby)\\
%Corresponding author: B. Jidjou Moghomye
}}
%	\author{Erika Hausenblas, Boris Jidjou Moghomye and Paul Andr\'e Razafimandimby}
%	\address{$^1$ Department of Mathematics and Information Technology, Montanuniversitaet Leoben, Leoben Franz Josef Strasse 18, 8700 Leoben, Austria}
%	\email{erika.hausenblas@unileoben.ac.at}
%	\address{$^1$ Department of Mathematics and Information Technology, Montanuniversitaet Leoben, Leoben Franz Josef Strasse 18, 8700 Leoben, Austria}
%	\email{boris.jidjou-moghomye@unileoben.ac.at}
%	\address{$^2$ School of Mathematical Sciences, Dublin City University, Collins Avenue,  Dublin $9$, Ireland }
%	\email{paul.razafimandimby@dcu.ie}
	%\dedicatory{\vspace{-10pt}\absalsize{$^{a}$  \\$^{b}$  }}

	\keywords{Stochastic Navier-Stokes equations; Stochastic Chemotaxis system;  martingale solutions, weak solutions}
	\subjclass[2000]{35R60,35Q35,60H15,76M35,86A05}
	
	\begin{abstract}
 This work is devoted to the convergence of a time-discrete numerical scheme of a semi-discretization model arising from biology, consisting of  a chemotaxis equation coupled with a Galerkin approximation of Navier-Stokes system  driven by transport noise  in a three-dimensional  bounded and convex domain. We propose a semi-implicit Euler numerical scheme approximating the infinite dimensional model, for which we study the well-posedness and derive some uniform  estimates for the discrete variables. %We present a convergence analysis proving the existence of probabilistic solutions of the semi-discretization model. Our result might  be interesting not only for the numerical simulation  but also for the construction of martingale solutions of the stochastic Chemotaxis-Navier-Stokes system.  
	\end{abstract}
	
	\date{\today}
	\maketitle

	\section{Introduction}

%%%%%%%%%%%%%%

This paper focuses on the time discretization  of a semi-discretization of  the following system of stochastic partial differential equations describing the evolution of biological organisms which react on a chemo-attractant in a fluid environment (see for instance \cite{Zhai,Zhang1}):
%%%%%%%%%%%%%%%%%%%%%%%%%%%%%%%%%%%%%%%%%%%%%%%%%%	
%%%%%%%%%%%%%%%%%%%%%%%%%%%%%%%%%%%%%%%%%%%%%%%%%%%%%%%%%%%%%
\begin{equation}\label{1.1}
	\begin{cases}
		d \bu +\left[(\bu\cdot \nabla) \bu +\nabla P-\vartheta\Delta \bu
		\right]dt=n\nabla\Phi dt+  \alpha\mathbf{f}(\bu) \circ d W_t,\ \text{ in }(0,T)\times \bo,\\
		dc + \bu\cdot \nabla cdt=\left[\mu\Delta c -nc\right]dt+ \gamma\Sum_{k=1}^3\mathbf{g}_k\cdot\nabla c \circ d \beta^k_t,\ \text{ in }(0,T)\times \bo,\\
		d n +\bu\cdot \nabla n dt= \left[\delta\Delta n-\nabla \cdot (n \nabla c)\right]dt, \ \text{ in }(0,T)\times \bo,\\
		\nabla \cdot \bu=0,\ \text{ in }(0,T)\times \bo,\\
		\frac{\partial n}{\partial \nu}=\frac{\partial c}{\partial \nu}=0, \ \bu=0,\qquad\text{on}\qquad (0,T)\times\partial \mathcal{O},\\
		n(0)=n_0,\quad c(0)=c_0,\quad \bu(0)=\bu_0,\qquad\text{in}\qquad(0,T)\times\mathcal{O},
	\end{cases}
\end{equation}
where $T>0$ is the final observation time, $\bo\subset\mathbb{R}^3$, is a bounded domain and  $\nu$ is the  outward normal on the boundary  $\partial\bo$. The process $W_t$ is given by $W_t=\sum_{k=1}^3e_kW_k(t)$, $t\in [0,T]$, while the  processes $\{\beta^k,W^k\}_{k=1,2,3}$  are   given family of i.i.d. standard real-valued  Brownian motions over a fixed  complete filtered probability space $(\Omega, \mathcal{F}, \mathbb{F}=(\mathcal{F}_t)_{t\in[0,T]}, \mathbb{P})$ with the filtration $\mathbb{F}$ satisfying the usual conditions. Here,  $\{e_k: k=1,2,3\}$ is the canonical basis of $\mathbb{R}^3$. The noises coefficients $\mathbf{g}_k:\mathbb{R}^3\to \mathbb{R}^3$, $k=1,2,3$ and $\mathbf{f}$  are given functions, while the non-negative constants $\alpha$ and $\gamma$ represent  the noises intensities, $\vartheta$ the viscosity of the fluid, $\mu$ the  diffusion of the chemo-attrandant and $\delta$ the  diffusion of the biological organism.   The symbol $\circ$ means that the stochastic differential is understood in the Stratonovich sense. The unknowns are  $\bu$, $P$ representing
the velocity and the pressure of the fluid, respectively, $c$ standing the chemical concentration and $n$ denoting the densities of the biological organism.

The stochastic system \re{1.1} has been studied in \cite{Zhang1} where the authors have incorporated L\'evy noise acting only on the fluid velocity equation, assuming the absence of noise in the equation governing the chemical concentration $c$.  In our forthcoming paper, we will show the tightness property and the convergence of probability laws of solutions of system \re{3.2}. In this way, we have shown the convergence of the approximating system towards a solution of \re{1.1}. In fact, Let $(\mathbf{e}_i)_{i\in \mathbb{N}}\subset [C^\infty(\bo)]^3$ be an orthonormal basis consisting of the eigenfunctions  of the Stokes operator $A \bv = -\Pl \Delta \bv, \, \bv \in D(A)$, with $D(A)= \Hb^2(\bo) \cap \mathbf{V}$.   The  finite dimensional space is constructed by
$$ \mathbf{H}_m :=\text{span}\left\{ \mathbf{e}_i: \ i=1,...,m \right\},\ \forall m\in \mathbb{N},$$ and the considered model is obtained by projecting the fluid velocity equation of system \re{1.1} into $\mathbf{H}_m$, for any fixed $m\in\mathbb{N}$. Let us consider $\mathcal{V}:= \{ \bv\in C^\infty_c(\bo;\mathbb{R}^3): \; \nabla\cdot\bv=0\},$ $\bH :=\text{ closure of }  \mathcal{V} \text{ in }  \mathbb{L}^2(\bo),$ and $\bV:= \text{ closure of }  \mathcal{V} \text{ in }   \mathbb{H}^1_0(\bo)$. We denote by $\Pl:\mathbb{L}^2(\bo)\to \bH$ the Leray-Helmholtz  orthogonal projection and  the  Neumann Laplacian $A_1$ acting on $\mathbb{R}$-valued function is defined by $A_1 \phi=-\Delta\phi, \ \phi\in D(A_1)$, where $D(A_1)=\{ \phi\in \h^2 : \frac{\partial \phi}{\partial \nu}=0\text{ on }\partial \bo \}$.  We recall that the duality pairing between a given space $X$ and its dual space $X^\prime$ is denoted by $\langle \cdot\, ,\, \cdot\rangle$, and introduce the bi-linear maps  $B:\ve \times \ve \to \ve'$, $B_2:\elm^2(\bo)\times \h^2(\bo)\to \elm^{2}(\bo)$, and  $B_3:\sobm^{1,4}(\bo)\times D(A_1) \to \elm^2(\bo)$ as follows: 
	\begin{equation*}
		\begin{split}
		&\langle
		B(\bu,\bv),\bw\rangle =\sum_{i,j=1}^3 \int_\mathcal{O}\bu_i(x)\frac{\partial
			\bv_j(x)}{\partial x_i}\bw_j(x)dx,\,\,  \bw\in \ve,\text{ and } \bu, \bv\in \ve,\\
		& ( B_2(\varphi,\phi),\psi) =\int_\bo \varphi(x) \phi(x)\psi(x) dx,	\    \varphi\in \elm^2(\bo), \ \phi\in \h^{2}(\bo),\  \psi \in \h^{1}(\bo),\\
		& (B_3(\varphi,\phi),\psi)=-\int_\bo \nabla\cdot(\varphi(x)\nabla \phi(x))\psi(x) dx,	\    \varphi\in \sobm^{1,4}(\bo), \  \phi\in D(A_1),\  \psi \in \elm^2(\bo),
		\end{split}
\end{equation*}
where $(\cdot,\cdot)$ is denoting the inner product of $L^2(\bo)$.
For any fixed $m\in \mathbb{N}$,  we will deal on the time discretization of the following system of stochastic differential equation on the  complete filtered probability space $(\Omega, \mathcal{F}, \mathbb{F}=(\mathcal{F}_t)_{t\in[0,T]}, \mathbb{P})$,
	\begin{align}\label{3.2}
	\begin{split}
		& \bu_m(t)+\eta A\bu_mdt+B^m(\bu_m,\bu_m)dt-B_0^m(\theta_m(n_m),\varPhi)d t=\alpha  \pi_m (\Pl(\mathbf{f}(\bu_m))) d W(t), \\
		&	c_m(t)+ \eps A_1c_mdt+B_1(\bu_m,  c_m)d t=- B_2(\theta^{\eps_m}_m(n_m),c_m)d t+\gamma F^1_m(c_m)g( c _m )d \beta(t), \\
		&	 n_m(t)+ \delta A_1n_mdt+B_1(\bu_m,  n_m)d t= B_3(\theta_m(n_m),c_m) dt, 
	\end{split}
\end{align}
in $\bh_m$, $\elm^2(\bo)$ and $(\h^1(\bo))'$ respectively, with  	$\eta=\vartheta+\frac{\alpha^2}{2}$,  $\eps=\mu+\frac{\gamma^2}{2}$,
$	\eps_m:=\frac{8}{m}$, $\delta_m:=\eps_m+1$, $\theta^{\eps_m}_m(n_m):=\theta_m(n_m)+\eps_m>0$, where  $\theta_m:\mathbb{R}\to \mathbb{R}$ is  a $C^2$-truncation   of the real valued identity function defined by 
\begin{equation}\label{3.1}
	\theta_m(x)=	\begin{cases}
		0,\  \text{ if } x\leq0,\\
		3m^4x^5-8m^3x^4+6m^2x^3,\  \text{ if } 0<x\leq\frac{1}{m},\\
		x,\ \text{ if }  \frac{1}{m}< x\leq m,\\
		C^2-\text{extension},\  \text{ if }   m<x<m+1,\\
		m+1, \  \text{ if }  x\geq m+2,
	\end{cases}
\end{equation}
and 
such that $0\leq \theta'_m\leq 1 \text{ in } (m,m+1).$    $F_m:]0,+\infty[\to ]0,+\infty[$  is a function defined  by 
\begin{equation*}
  	F_m(x):=\min \left(1, \frac{m}{x}\right), \  x\in ]0,+\infty[,
\end{equation*}
and $F_m^1(c):= F_m(\abs{c}_{H^1}+1),\;  m\geq 1,\, c\in \mathrm{H}^1(\bo).$ 
$B^m$ and $B_0^m$ are operators defined respectively by 
\begin{align*}
	B^m(\bv,\bv)&=\pi_m B(\bv,\bv), \, \bv \in \bV, \\ 
	B_0^m(\varphi,\phi)&=\pi_m(\Pl(\varphi \nabla \phi)),\, \varphi\in \elm^2(\bo) \text{ and } \phi\in \sobm^{1,\infty}(\bo),
\end{align*}
 and  $\pi_m : \bH  \to \bH_m $ the orthogonal projection. 
The problem \eqref{3.2}    subjected to a regularised initial data: 
$$ \bu_m(0)=\bu_0^m:= \pi_m\bu_0, \ c_m(0)= c_0^m \ \text{ and } \  n_m(0)=n^m_0,$$
 is an infinite dimensional semi-discretization of \eqref{1.1} converging towards a weak martingale solution of \re{1.1} when the parameter $m$ goes to $\infty$.

The main goal of this work is to prove the existence  of probabilistic solution of system \re{3.2} for any arbitrary fixed $m\in \mathbb{N}$ by implementing the method used from \cite{Banas1,Banas,De,Glatt,Razafimandimby} for some nonlinear stochastic partial differential equations. Here the advantage is that our method of proof will be very useful for the numerical simulation of problem \re{1.1} whenever we will have a non-negativity conservation scheme for the chemotaxis system. While we are able to prove the existence of a probabilistic solution to \eqref{3.2}, the uniqueness of the   solution remains an open problem.

The layout of this paper is as follows: In Section 2, we introduce some basic notations, preliminary results, and state the main result corresponding to the problem \re{3.2}. In Section3,  we introduce a time discretization and analyze a semi-discret numerical scheme. Specifically, we prove the well-posedness of the numerical scheme and some uniform estimates for any discrete solutions, which are required in the convergence analysis, and finally in Section 4, we prove the main result by carrying out the convergence analysis of the numerical scheme.

	\section{Basic notations, assumptions and main result}
	
 For any $p\in [1,\infty)$ and $k\in \mathbb{N}$, we denote by $\elm^p, \sobm^{k,p}$ (resp. $\mathbb{L}^p, \mathbb{W}^{k,p}$) the well-known Lebesgue and Sobolev
		spaces, respectively, of functions $u: \MO \to \mathbb{R}$ (resp. $\bv:\MO \to \mathbb{R}^3$).  
%		 The spaces of functions $\bv:\mathbb{R}^3\to \mathbb{R}^3$  such that each component of $\bv$  belongs to $\mathrm{L}^p(\MO)$ (resp. 
%		to $\mathrm{W}^{k,p}(\MO)$) are denoted by $\e\elm^p(\MO)$  (resp. by 
%		$\mathbb{W}^{k,p}(\MO)$).
%		Hereafter, we simply write $\elm^p, \sobm^{k,p}, \mathbb{L}^p, \mathbb{W}^{k,p}$ instead of  $\elm^p(\MO), \sobm^{k,p}(\MO)$, $\mathbb{L}^p(\MO), \mathbb{W}^{k,p}(\MO)$ and
%		Throughout this paper,  all $\mathbb{R}^3$-valued functions will be written in bold-faced letters. 
  For $p\in [1,\infty)$ and $k\in \mathbb{N}\cup \{0\}$, the norms of the Banach spaces $\sobm^{k,p}$ and $\mathbb{W}^{k,p}$ are all denoted by $\abs{\cdot}_{k,p}$ which are the usual Sobolev norms for real-valued and $\mathbb{R}^3$-valued functions. The spaces $\elm^2, \h^k:= \sobm^{k,2}, \, \el^2,\,  \mathbb{H}^k:= \mathbb{W}^{k,2}$ are all separable Hilbert spaces.       The usual scalar products on $\elm^2$ and $\mathbb{L}^2$  are all denoted  by the same symbol $(
		\cdot \,,\, \cdot)$   and its associated norm is, of course,  denoted by $\abs{\cdot}_{0,2}$. \newcommand{\abse}[1]{\lve #1 \rve}
		By $\h^1_0$ we mean the well-known  separable  Hilbert space of functions in $\h^1$
		that vanish on the boundary on $\MO$.  For a Banach space $X$,  by  $X_{weak}$ we will denote the space $X$ equipped with the weak topology. 
	%We remark that $B_1(\bv, \phi)\in \elm^{2}(\bo)$ whenever  $\bv\in \bV$ and $\phi\in \h^2$. 
	%\begin{equation*}
	%( B_1(\bv,\phi),\psi)=\int_\bo\bv(x)\cdot\nabla \phi(x)\psi(x) dx,	\ \bv\in V, \  \phi\in \h^2,\  \psi \in  \elm^2
	%\end{equation*}
Throughout this paper, given two separable Hilbert spaces $X$ and $Y$  we denote by $\mathcal{L}_{HS}(X; Y)$ the space of of Hilbert–Schmidt operators from $X$ into $Y$.
	
		We impose the following assumptions on $\varPhi$, $\mathbf{g}_k$, $k=1,2,3,$  $\mathbf{f}$,  and on the  initial conditions.
	
	\begin{assumption} \label{assum1}
%		We assume that the domain $\bo$ is a bounded, opened and convex subset  of $\mathbb{R}^3$ with smooth boundary $\partial \bo$.
		 We assume that:
		\begin{itemize}
				\item [(i)] $\mathbf{f}:\ve\longrightarrow \mathcal{L}_{HS}(\mathbb{R}^3; \mathbb{L}^2)$ is defined by for any $\bv\in \bV$ and $z\in \mathbb{R}^3$, 
				\begin{equation*}
				\mathbf{f}(\bv)(z)=(\nabla \bu) z.
				\end{equation*}
			\item [(ii)]  For $k\in \{1,2,3\}$, $\mathbf{g}_k:=(g_k^1,g_k^2,g_k^3)\in  \sobb^{1,\infty}$, $\mathbf{g}_k=0$ on $\partial \bo$ and $\abs{\mathbf{g}_k}_{1,\infty}\leq 1$,
			\item [(iii)]  $\mathbf{g}_k$ is a divergence free vector fields, that is $\nabla\cdot \mathbf{g}_k=0$, for $k=1,2,3$,
			\item  [(iv)]  the matrix-valued function $\mathbb{G} : \bo \times  \bo \to \mathbb{R}^3 \otimes  \mathbb{R}^3$ defined by
			\begin{equation*}
				g^{i,j}(x,y)=\sum_{k=1}^{3}g_k^i(x)g_k^j(y), \qquad \forall i, j=1,2,3\ \text{and} \ \forall x, y \in\bo,
			\end{equation*}
			satisfies  $\mathbb{G}(x, x) = Id_{\mathbb{R}^3}$ for any $x\in  \bo$.
		\end{itemize}
		 In addition to the above conditions, we also assume that for any $m\in \mathbb{N}$,
		\begin{align}
			 \Phi\in \sobm^{1,\infty},\ \bu_0\in \bh,    \text{  }n^m_0\in  H^2 \text{ and } \ c^m_0\in H^2.\label{3.4}
		\end{align}	
%		as well as
%		\begin{equation}
%		
%		\end{equation}
	\end{assumption}

 %The definition of the space $\mathcal{L}_{HS}(\mathbb{R}^3;\mathbb{L}^2)$  is given in Appendix \ref{appen}.
  An  observation on Assumption \ref{assum1} is that by setting for $k=1,2,3$, 
		\begin{equation*}
			\mathbf{g}_k(x)=\begin{cases}
				1_{\bar\bo}(x) e_k\quad \text{  if } x\in  \bo,\vspace{0.2cm}\\
				0 \quad  \text{  if } x\in \partial\bo,
		\end{cases}	\end{equation*}
		where  $1_{\bar \bo}$ is the indicator function of the closure $\bar \bo$ of the domain $\bo$, the family of vector fields $\{\mathbf{g}_k\}_{k=1,2,3}$ satisfies (ii), (iii) and (iv).

It is important to note that the family $(\theta_m)_{m\in \mathbb{N}}$ defined by \re{3.1} satisfies the following properties.
	\begin{itemize}
		\item For any $m\geq 1$, $\theta_m\in C^2(\mathbb{R})$.
		\item  There exists a constant $\br_\theta>0$ such that for all $m\in \mathbb{N}$, 
		\begin{equation}\label{3.3}
			\begin{split}
				&\valabs{\theta_m(x)}\leq 17(m+1), \ \valabs{\theta'_m(x)}\leq \br_\theta,\ \valabs{\theta_m^{''}(x)}\leq \br_\theta m,\ \forall x\in \mathbb{R}, \\
				& -\frac{8}{m}\leq \theta_m(x), \ \forall x\in \mathbb{R}, \text{ and }\theta_m(x)\leq 9x, \ \forall x\geq 0.
			\end{split}
		\end{equation}
	\end{itemize} 
As a consequence of the above, for all $m\geq 1$, $\theta_m$ and $\theta'_m$ are Lipschitz and the Lipschitz  constant of $\theta_m$ does not depend on $m$. 

%Next, for any $m\in \mathbb{N}$, let $F_m:]0,+\infty[\to ]0,+\infty[$ be  a function defined  by 
%\begin{equation*}
	%F_m(x):=\min \left(1, \frac{m}{x}\right), \  x\in ]0,+\infty[.
%\end{equation*}
We recall, see   \cite{Caraballo}, that 
%\red{ By definition of $F_m$, we derive that for any integer $m\geq 1$ and real number $x>0$, the following hold}
\begin{align}
	&\valabs{F_m(x)-F_m(y)}\leq \frac{\valabs{x-y}}{y}, \ \forall x,y\in ]0,+\infty[,\ \forall m\geq 1, \label{s1.5} \\
	& 0\leq F_m(x)\leq 1 \text{ and }  0\leq F_m(x)\leq \frac{m}{x},\; \forall m\geq1, \, x>0.\label{s1.4}
\end{align}

Next, we prove the global existence of martingale solution to problem \re{3.2}  in the following sense. 
\begin{definition}\label{defi3.2}
	Let $m$ be an arbitrary non-negative integer but fixed. A martingale  solution of the problem \re{3.2} is a system $$(\bar{\Omega}, \bar{\mathcal{F}},\bar{\mathbb{F}}_m, \bar{\mathbb{P}},(\bu_m,c_m,n_m),(\bar{W}^m,\bar{\beta}^m)),$$
	where
	\begin{enumerate}
	\item $(\bar{\Omega},\bar{\mathcal{F}},\bar{\mathbb{F}}_m, \bar{\mathbb{P}})$ is a   complete filtered probability space, with the filtration $\bar{\mathbb{F}}_m=(\bar{\mathcal{F}}^m_t)_{t\in [0,T]}$  satisfying the usual conditions; 
	\item $(\bar{W}^m,\bar{\beta}^m)$ is a  Wiener processes on $\mathbb{R}^3\times\mathbb{R}^3$ over $(\bar{\Omega},\bar{\mathcal{F}},\bar{\mathbb{F}}_m, \bar{\mathbb{P}})$;
	\item$(\bu_m,c_m,n_m):[0,T]\times\bar{\Omega}\to \bh_m\times D(A_1)\times \h^1$ is  $\bar{\mathbb{F}}_m$-progressively measurable such that 
	$\bar{\mathbb{P}}_m$-a.s.,
	\begin{equation}\label{3.4*}
		\begin{split}
&	\hspace{2cm}\bu_m\in C([0,T];\bh_m), \\
	& c_m\in C([0,T];(\h^1)')\cap \elm^\infty(0,T;\h^1)\cap \elm^2(0,T, D(A_1)),\\
	 &n_m\in C([0,T];(\h^1)')\cap  \elm^\infty(0,T;\elm^2)\cap \elm^2(0,T;  \h^1),
		\end{split}
	\end{equation}	
	\item for all $t\in [0,T]$ and $\bar{\mathbb{P}}$-a.s. $\bu_m$, $c_m$ and $n_m$ satisfy the integral versions of the stochastic evolution equations  of system \re{3.2}  in $\bh_m$, $\elm^2$ and $(\h^1)^\prime$, respectively with $W$ and $\beta$ replaced respectively by $\bar{W}^m$ and $\bar{\beta}^m$.  
	\end{enumerate}

\end{definition}

%\subsection{Existence and uniqueness of a strong solution to the approximated system}

%In fact, the uniqueness is an open problem even for the deterministic 3D Chemotaxis without the Navier-Stokes equations.  
The main result of this paper is  labeled in the following proposition.
\begin{proposition}\label{lem3.2}
Assume that 
	$	\gamma^2<\frac{1}{484}\eps,$
	and that Assumption \ref{assum1} hold.  Then, 	for any $m\in \mathbb{N}$  the problem \re{3.2} has a  martingale solution $(\bar{\Omega}, \bar{\mathcal{F}},\bar{\mathbb{F}}_m, \bar{\mathbb{P}},(\bu_m,c_m,n_m),(\bar{W}^m,\bar{\beta}^m))$.
\end{proposition}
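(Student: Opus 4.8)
The plan is to realise the martingale solution of \re{3.2} as a limit of a semi-implicit Euler time-discretisation, in the spirit of \cite{Banas1,Banas,De,Glatt,Razafimandimby}. Fix $m$ and keep the standing hypothesis $\gamma^2<\tfrac{1}{484}\eps$. Partition $[0,T]$ into $N$ intervals of length $\tau=T/N$ with nodes $t_j=j\tau$, and write $\Delta_{j+1}W=W(t_{j+1})-W(t_j)$, $\Delta_{j+1}\beta=\beta(t_{j+1})-\beta(t_j)$. Starting from $(\bu^0,c^0,n^0)=(\bu_0^m,c_0^m,n_0^m)$, I would compute $(\bu^{j+1},c^{j+1},n^{j+1})$ sequentially by treating the leading diffusion operators $\eta A$, $\eps A_1$, $\delta A_1$ implicitly at the node $t_{j+1}$, the convective operators $B^m(\bu^j,\cdot)$ and $B_1(\bu^j,\cdot)$ semi-implicitly (old velocity, new unknown), and the coupling/reaction terms $B_0^m(\theta_m(n^j),\varPhi)$, $B_2(\theta_m^{\eps_m}(n^j),\cdot)$, $B_3(\theta_m(n^j),c^j)$ together with the noise coefficients $\pi_m\Pl\mathbf{f}(\bu^j)$ and $F^1_m(c^j)g(c^j)$ explicitly at $t_j$ (the latter two multiplied by the increments $\Delta_{j+1}W$, $\Delta_{j+1}\beta$). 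Evaluating the noise at the left node keeps each iterate $\mathcal{F}_{t_{j+1}}$-measurable, which is essential for the later martingale identification.

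The first task is the solvability of each time step. Because of the implicit diffusion, every step reduces to an elliptic problem of the form $(\mathrm{Id}+\tau L)v+\tau N(v)=h$ with $L\in\{\eta A,\,\eps A_1,\,\delta A_1\}$ coercive; with the reactions and the noise explicit it is in fact linear, hence uniquely solvable by the Lax--Milgram lemma, the coercivity coming from the implicit diffusion and from the antisymmetries $\langle B^m(\bu^j,v),v\rangle=0$ and $\langle B_1(\bu^j,v),v\rangle=0$ valid for the divergence-free $\bu^j$ (if one instead keeps the convection implicit, the step becomes monotone nonlinear and the Browder--Minty theorem applies). The boundedness and global Lipschitz character of $\theta_m$, $\theta_m'$ and $F_m$ recorded in \re{3.3}, \re{s1.5}, \re{s1.4} guarantee that the right-hand sides are well-defined in the relevant spaces, and measurability of the solution map transfers $\bar{\mathbb{F}}$-adaptedness to the iterates.

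Next I would derive a priori estimates uniform in $\tau$. Testing the velocity step with $\bu^{j+1}$, the $c$-step with $c^{j+1}$ and with $A_1c^{j+1}$, and the $n$-step with $n^{j+1}$, and using the identity $2(a-b,a)=|a|^2-|b|^2+|a-b|^2$, I would sum over $j$ and take expectations; the martingale increments vanish, and the quadratic noise contributions are absorbed by the enhanced coefficients $\eta=\vartheta+\tfrac{\alpha^2}{2}$ and $\eps=\mu+\tfrac{\gamma^2}{2}$, which already encode the Stratonovich--Itô corrections. It is precisely in the $D(A_1)$-estimate for $c$ that the hypothesis $\gamma^2<\tfrac{1}{484}\eps$ enters, ensuring that the Itô correction of the $\h^1$-cut-off transport noise $F^1_m(c)g(c)$ is strictly dominated by $\eps A_1$. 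A discrete Gronwall lemma together with the Burkholder--Davis--Gundy inequality then yields bounds, uniform in $\tau$ and in every moment, reproducing the regularity classes of \re{3.4*}: $\bu$ bounded in $\elm^\infty(0,T;\bh_m)$, $c$ in $\elm^\infty(0,T;\h^1)\cap\elm^2(0,T;D(A_1))$, and $n$ in $\elm^\infty(0,T;\elm^2)\cap\elm^2(0,T;\h^1)$.

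Finally, forming the piecewise-constant and piecewise-affine interpolants of the iterates, the uniform bounds yield tightness of their joint laws on a path space combining $C([0,T];(\h^1)')$, the strong topology of $\elm^2(0,T;\elm^2)$, and the weak(-star) topologies of $\elm^2(0,T;\h^1)$, $\elm^2(0,T;D(A_1))$ and of $\elm^\infty$ in time; the compactness comes from an Aubin--Lions/Dubinskii argument for the deterministic part and a fractional-in-time (Aldous-type) estimate for the stochastic part. Prokhorov's theorem and the Jakubowski--Skorokhod representation theorem (needed because of the non-metrisable weak topologies) then provide a new probability space carrying a.s.-convergent copies, whose limit $(\bu_m,c_m,n_m)$, together with limit Wiener processes $(\bar W^m,\bar\beta^m)$, I would identify as a solution of \re{3.2} in $\bh_m$, $\elm^2$ and $(\h^1)'$ by the standard martingale-characterisation argument. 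The \emph{main obstacle} is the passage to the limit in the chemotaxis coupling $B_3(\theta_m(n_m),c_m)$, i.e.\ in $\nabla\!\cdot(n\nabla c)$, and in the Itô stochastic integrals: both require upgrading weak to strong convergence --- of $\nabla c$ in $\elm^2$ and of $n$ through the $C([0,T];(\h^1)')\cap\elm^2(0,T;\elm^2)$ compactness --- in order to pass the limit through the nonlinear and quadratic-variation terms, the truncations $\theta_m$ and $F^1_m$ being exactly what renders these nonlinearities continuous for the convergences at hand. Since pathwise uniqueness is unavailable, this construction delivers only a martingale, not a probabilistically strong, solution.
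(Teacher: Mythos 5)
Your proposal is correct in its overall architecture and it matches the paper's global strategy exactly: semi-implicit Euler discretisation of \re{3.2}, uniform-in-$N$ estimates where $\gamma^2<\frac{1}{484}\eps$ is used precisely in the $\elm^2(0,T;D(A_1))$-estimate for $c$, fractional-in-time estimates for the interpolants, tightness, Prokhorov plus the Jakubowski--Skorokhod representation (needed, as you say, because of the non-metrisable weak topologies), and martingale identification of the limit. Where you genuinely depart from the paper is in the design of the scheme itself: you evaluate the coupling terms $B_0^m(\theta_m(\cdot),\varPhi)$, $B_2(\theta_m^{\eps_m}(\cdot),\cdot)$ and $B_3(\theta_m(\cdot),\cdot)$ at the \emph{old} time level, so that each time step splits into three sequentially decoupled \emph{linear} elliptic problems, uniquely solvable by Lax--Milgram, with adaptedness of the iterates immediate from continuity of the solution maps. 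The paper instead takes these couplings \emph{implicitly} (see \re{s2.3}, where $\theta_m(n^\ell)$, $B_2(\theta_m^{\eps_m}(n^\ell),c^\ell)$ and $B_3(\theta_m(n^\ell),c^\ell)$ all carry the index $\ell$), so each step is a nonlinear coupled system: existence is then obtained by freezing $(\bar c,\bar n)$, solving the linearised system (Lemma \ref{Lemmas2.3}), proving continuity and compactness of the resulting map $S_\ell$ (Lemma \ref{Lemmas2.4}), and applying the Leray--Schauder fixed point theorem \cite{Granas}; since the fixed point need not be unique, measurability of $(\bu^\ell,c^\ell,n^\ell)$ requires the additional measurable-selection argument of \cite{Bensoussan,Glatt} via a closed-graph multi-valued map. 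Your explicit/semi-implicit variant buys a substantially shorter solvability-and-measurability step (no fixed point theorem, no selection theorem), and the a priori estimates survive because $\theta_m$ is bounded and the sign property of the $B_2$-term only needs the old $n$ in the first argument; what the paper's implicit treatment buys is the structure the authors want for future numerics (a scheme compatible with non-negativity preservation), at the cost of the heavier functional-analytic machinery. Two small cautions about your write-up: the claim of bounds "in every moment" overstates what is available (and needed) --- the paper works with moments of order at most four, the velocity estimate being in fact pathwise because the projected transport noise is skew-symmetric against $\bu^i$; and note that with your left-endpoint convection $B_1(\bu^j,\cdot)$ versus the paper's $B_1(\bu^\ell,\cdot)$ nothing changes, since the cancellation $\langle B_1(\bv,c),c\rangle=0$ only uses that $\bv$ is divergence-free.
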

The rest of this paper is devoted on the proof of Proposition \ref{lem3.2} that is based on a time-discretization. In the  next section, we present and analyse a semi-discrete scheme based on a time discretisation of system \re{3.2}. Later on, we prove some useful a priori estimates of the semi-discret solutions.

	\section{Time discretisation and measurability of semi-discrete solutions.}
	
	For any integer $N\in \mathbb{N}$, let $h=\frac{T}{N}$  and $ \{0 = t_0 < t_1 <\cdot \cdot \cdot < t_N= T \},$
	be a partition of $[0,T ]$ where   $t_\ell = \ell h$, $\ell \in \{0,...N\}$. Let $(\Omega, \mathcal{F}, \mathbb{F},\mathbb{P})$ be a filtered probability space with the filtration satisfying the usual conditions. Let  $(W_1,W_2,W_3)$ be two independent $\mathbb{R}^3$-valued $\mathbb{F}$-Wiener processes. For $\ell\in\{0,1,2,...,N-1\}$ let 
	\begin{equation*}
		\Delta^\ell W_k=W_k(t_{\ell+1})-W_k(t_\ell)\text{ and }\Delta^\ell \beta_k=\beta_k(t_{\ell+1})-\beta_k(t_\ell), \ k\in \{1,2,3\}.
	\end{equation*}
	We set $\bu^0=\bu_0^m$, $c^0=c_0^m$, $n^0=n_0^m$ and $\Delta^\ell \beta=(\Delta^\ell \beta_1, \Delta^\ell \beta_2, \Delta^\ell \beta_3)$. For any  $N\in \mathbb{N}$,  we consider the following approximate scheme which consist on   filtered  probability space $(\Omega, \mathcal{F}_{t_\ell},  \mathbb{P})$ to find $\bu^\ell:\Omega\to \bh_m$, $c^\ell:\Omega\to D(A_1)$ and $n^\ell:\Omega\to D(A_1)$, $\ell=1,...,N$, such that the following equations hold $\mathbb{P}$-a.s.,
	\begin{equation}\label{s2.3}
		\begin{split}
			&	\bu^\ell-\bu^{\ell-1}+[\eta A\bu^\ell+B^m(\bu^{\ell-1},\bu^\ell)-B_0^m(\theta_m(n^\ell),\phi)]h=\alpha \pi_m (\Pl(\mathbf{f}(\bu^{\ell-1})))  \Delta^\ell W,\\
			&	c^\ell-c^{\ell-1}+ [\eps A_1c^\ell +B_1(\bu^\ell,  c^\ell)] h=- B_2(\theta_m^{\eps_m}(n^\ell),c^\ell)h+\gamma F^1_m(c^{\ell-1})g( c^{\ell-1})  \Delta^\ell\beta,\\
			&n^\ell-n^{\ell-1}+ [\delta A_1n^\ell+B_1(\bu^\ell,  n^\ell)]h= B_3(\theta_m(n^\ell),c^\ell)h, 
		\end{split}		
	\end{equation}
	in $\bh_m$, $\elm^2$ and $\elm^2$ respectively.
	\begin{proposition}\label{propo3.7}
		Let $N\in \mathbb{N}$. Given $(\bu^0,c^0,n^0)\in \bh_m\times \h^2\times \h^2$,  there exists $\{\bu^\ell\}_{\ell=1}^N\subset \bh_m$,  $\{c^\ell\}_{\ell=1}^N\subset D(A_1)$ and $\{n^\ell\}_{\ell=1}^N\subset D(A_1)$ satisfying \re{s2.3}. Moreover, for all $\ell\in \{1,2,...,N\}$, $\bu^\ell$, $c^\ell$ and $n^\ell$ are $\mathcal{F}_{t_\ell}$-measurable.
	\end{proposition}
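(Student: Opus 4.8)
The plan is to argue by induction on $\ell$, so that it suffices to solve a single step: given $\mathcal{F}_{t_{\ell-1}}$-measurable data $\bu^{\ell-1}\in\bh_m$ and $c^{\ell-1},n^{\ell-1}\in\h^2$, together with the $\mathcal{F}_{t_\ell}$-measurable increments $\Delta^\ell W,\Delta^\ell\beta$, I want to produce an $\mathcal{F}_{t_\ell}$-measurable triple $(\bu^\ell,c^\ell,n^\ell)\in\bh_m\times D(A_1)\times D(A_1)$ satisfying \eqref{s2.3}. I would separate the pathwise solvability (with $\omega$, hence all the data, frozen) from the measurable dependence on $\omega$.

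For the pathwise problem I would build a fixed point in the single variable $n$. First, the velocity equation is \emph{linear} in $\bu^\ell$, because the convective slot of $B^m$ carries the old iterate $\bu^{\ell-1}$; the antisymmetry $\langle B(\bv,\bw),\bw\rangle=0$ on divergence-free fields gives $\langle(I+\eta h A)\bw+hB^m(\bu^{\ell-1},\bw),\bw\rangle\ge\abs{\bw}_{0,2}^2$, so the operator is coercive and, on the finite-dimensional $\bh_m$, invertible; thus every $n\in\h^1$ determines a unique $\bu=\bu(n)\in\bh_m$. With $\bu$ and $n$ frozen, the $c$-equation is linear and coercive: $B_1(\bu,\cdot)$ is antisymmetric, $A_1\ge0$, and $(B_2(\theta_m^{\eps_m}(n),c),c)=\int_\bo\theta_m^{\eps_m}(n)\,c^2\ge0$ since $\theta_m^{\eps_m}(n)=\theta_m(n)+\eps_m>0$; Lax--Milgram in $\h^1$ and elliptic regularity for the Neumann Laplacian yield a unique $c=c(\bu,n)\in D(A_1)$. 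Finally I would solve the density equation with $\theta_m$ frozen at the input, i.e. the coercive linear problem $(I+\delta h A_1)\bar n+hB_1(\bu,\bar n)=n^{\ell-1}+hB_3(\theta_m(n),c)$, giving $\bar n\in D(A_1)$, and set $T(n):=\bar n$. A fixed point $n=T(n)$ is exactly a solution of \eqref{s2.3}.

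Since for large $h=T/N$ no smallness is available, I would obtain a fixed point by the Leray--Schauder (Schaefer) theorem rather than by contraction; compactness of $T$ in the $\h^1$-topology follows from $D(A_1)\hookrightarrow\hookrightarrow\h^1$. The a priori bound on $\{n:\;n=\lambda T(n),\ \lambda\in[0,1]\}$ I would extract by energy-testing the genuine (un-frozen) system by $\bu$, $c$, $n$: using \eqref{3.3} for $\theta_m,\theta_m'$ and \eqref{s1.4} for $F_m$, this controls $\abs{\bu}_{0,2}$, $\abs{c}_{1,2}$ and $\abs{n}_{1,2}$ by the data alone. The genuinely delicate point is the $\h^2$ closure for $n$: writing $\delta h\,A_1 n=n^{\ell-1}-n-hB_1(\bu,n)+hB_3(\theta_m(n),c)$ and expanding $B_3(\theta_m(n),c)=-\theta_m'(n)\nabla n\cdot\nabla c-\theta_m(n)\Delta c$, the term $\abs{\nabla n}_{0,4}\,\abs{\nabla c}_{0,4}$ appears; here I would invoke the subcritical Gagliardo--Nirenberg inequality $\abs{\nabla n}_{0,4}\le C\,\abs{\nabla n}_{0,2}^{1-\kappa}\abs{n}_{2,2}^{\kappa}$ with $\kappa<1$ in 3D and absorb the resulting sub-linear power of $\abs{n}_{2,2}$ into $\abs{A_1 n}_{0,2}$ by Young's inequality, which closes the bound uniformly in $\lambda$. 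Schaefer's theorem then produces a solution; this $\h^2$ estimate is the main analytic obstacle.

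For the measurability I would avoid uniqueness, which for general $h$ fails (the difference estimates close only for small $h$). Writing $\mathcal{S}(\omega)\subset\bh_m\times D(A_1)\times D(A_1)$ for the set of solutions of \eqref{s2.3} attached to the data $d(\omega)$---nonempty by the preceding step---I would regard it as a multifunction with values in the Polish space $\bh_m\times\h^1\times\h^1$. Each $\mathcal{S}(\omega)$ lies in a fixed $\h^2$-ball, hence is relatively compact in $\h^1$, and is closed because \eqref{s2.3} is stable under strong $\h^1$-limits; moreover its graph is measurable, since every term of \eqref{s2.3} is continuous in $(\bu,c,n)$ and, through $d(\omega)$, $\mathcal{F}_{t_\ell}$-measurable in $\omega$ (the probability space being complete). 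The Kuratowski--Ryll-Nardzewski selection theorem then supplies an $\mathcal{F}_{t_\ell}$-measurable selector $(\bu^\ell,c^\ell,n^\ell)\in\mathcal{S}(\cdot)$, and the induction closes the proof.
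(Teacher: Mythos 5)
Your overall strategy coincides with the paper's: pathwise solvability by a Leray--Schauder fixed point (exploiting the linearity of the velocity step, the antisymmetry of the transport terms, and the sign $\theta_m^{\eps_m}>0$), an $\h^2$ a priori bound closed by Gagliardo--Nirenberg plus Young absorption, and measurability of a possibly non-unique solution via a measurable-selection theorem applied to the closed-graph solution multifunction (Kuratowski--Ryll-Nardzewski in your version, Bensoussan--Temam in the paper's); your reduction to a fixed point in the single variable $n$, keeping the $c$-equation fully linear and coercive in $c$, is a legitimate simplification of the paper's two-variable map $S_\ell(\bar c,\bar n)$. However, there is one concrete gap: the claim that $T$ maps $\h^1$ into $D(A_1)$, from which you draw both compactness and the regularity $n^\ell\in D(A_1)$, is false. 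For $n\in\h^1$ one only has $\nabla n\in\elm^2$, while $c=c(\bu,n)\in D(A_1)\subset\h^2$ gives $\nabla c\in\elm^6$ and no better (in 3D, $\h^2\not\hookrightarrow\sobm^{1,\infty}$); hence the chemotaxis source $\theta_m'(n)\nabla n\cdot\nabla c$ lies only in $\elm^{3/2}$, not in $\elm^2$. The elliptic problem defining $T(n)$ therefore has merely an $\elm^{3/2}$ right-hand side, so $T(n)\notin D(A_1)$ in general (at best $T(n)\in\sobm^{2,3/2}$, and even that requires $\elm^p$-regularity, $p\neq 2$, for the Neumann problem on a convex non-smooth domain, which is outside the $\elm^2$ theory of Grisvard used here). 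This also undercuts your a priori estimate: elements of the Leray--Schauder set $\{n:\,n=\lambda T(n)\}$ are then not known to lie in $\h^2$, so your ``$\h^2$ closure'' step (testing with $A_1n$) is formal without a preliminary bootstrap.

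The repair is exactly the paper's device: run your one-variable scheme on $\sobm^{1,4}$ instead of $\h^1$. For $n\in\sobm^{1,4}$ the product $\theta_m'(n)\nabla n\cdot\nabla c$ is in $\elm^2$ by H\"older ($\nabla n\in\elm^4$, $\nabla c\in\elm^4$ since $\h^2\hookrightarrow\sobm^{1,4}$), so $T$ maps $\sobm^{1,4}$ into $D(A_1)$; compactness follows from $\h^2\hookrightarrow\hookrightarrow\sobm^{1,4}$ in 3D; the fixed point automatically lies in $D(A_1)$, so the regularity asserted in the proposition holds; and your Gagliardo--Nirenberg/Young bound then controls the Leray--Schauder set in $\sobm^{1,4}$, as the scheme \re{s2.3} requires. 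With that single change of ambient space your argument closes and is, modulo the one-variable versus two-variable fixed point, the paper's proof; the measurable-selection part is sound as you state it.
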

	As a preparation of the proof of this proposition, we state and prove several auxiliary results. For this purpose, let us fix $\omega\in \Omega$, $N\in \mathbb{N}$ and $\ell\in\{1,...,N\}$.  For all $(\bar{c},\bar{n})\in \sobm^{1,4}\times \sobm^{1,4}$, we consider the following system:
	\begin{equation}\label{s2.4}
		\begin{split}
			&	\bu^\ell+[\eta A\bu^\ell+B^m(\bu^{\ell-1}(\omega),\bu^\ell)-B_0^m(\theta_m(\bar n),\phi)]h=\mathbf{f}_\ell(\omega),\text{ in } \bh_m,\\
			&	(I+h\eps A_1)c^\ell=\Psi_\ell(\omega,\bu^\ell,\bar c),\text{ in } \elm^2,\\
			&(I+h\delta A_1)n^\ell  =\mathcal{Z}(\omega,\bu^\ell,c^\ell,\bar n),\text{ in } \elm^2,
		\end{split}		
	\end{equation}
	where 
	\begin{equation*}
		\begin{split}
			&\mathbf{f}_\ell(\omega)=\bu^{\ell-1}(\omega)+\alpha\pi_m (\Pl(\mathbf{f}(\bu^{\ell-1}(\omega))))  \Delta^\ell W_k(\omega),\\
			&\Psi_\ell(\omega,\bu^\ell,\bar c)=c^{\ell-1}(\omega)-B_1(\bu^\ell,  \bar c) h- B_2(\theta^{\eps_m}_m(\bar n),\bar c)h+\gamma F^1_m(c^{\ell-1})g( c^{\ell-1}(\omega))  \Delta^\ell\beta(\omega),\text{ and }\\
			&\mathcal{Z}(\omega,\bu^\ell,c^\ell,\bar n)=n^{\ell-1}(\omega)-B_1(\bu^\ell,  \bar n)h +B_3(\theta_m(\bar n),c^\ell)h.
		\end{split}
	\end{equation*}
	\begin{lemma}\label{Lemmas2.3}
		Let  $\omega\in \Omega$, $N\in \mathbb{N}$ and $j\in\{1,...,N\}$ be fixed. For any $(\bar{c},\bar{n})\in \sobm^{1,4}\times \sobm^{1,4}$, the problem \re{s2.4} has a unique solution $(\bu^\ell ,c^\ell,n^\ell)\in \bh_m\times D(A_1)\times D(A_1)$ such that
		\begin{equation}\label{s2.5}
			\begin{split}
				&\abs{\bu^\ell}^2_{0,2}+2\eta h \abs{\nabla\bu^\ell}^2_{0,2}\leq 2\abs{\bu^{\ell-1}}^2_{0,2}+ 2h(m+1)\abs{\nabla\Phi}_{0,\infty}+\abs{\mathbf{f}_\ell(\omega)}^2_{0,2}=:\mathcal{T}^1_m(\omega,\ell),\\
				&\abs{c^\ell}^2_{0,2}+2h\eps\abs{\nabla c^\ell}^2_{0,2}+\abs{A_1c^\ell}^2_{0,2}\leq \mathcal{T}^2_m(\omega,\ell,\abs{ \bar c}^2_{1,4}),\\
				&\abs{n^\ell}^2_{0,2}+h\delta\abs{\nabla n^\ell}^2_{0,2}\leq \mathcal{T}^3_m(\omega,\ell,\abs{ \bar c}^2_{1,4},\abs{ \bar n}^2_{1,4}),
			\end{split}
		\end{equation}
		where 
		\begin{equation*}
			\mathcal{T}^2_m(\omega,\ell,\abs{ \bar c}^2_{1,4}):= \bk\abs{c^{\ell-1}}^2_{0,2}+\bk(m)\mathcal{T}^1_m(\omega,\ell)\abs{ \bar c}^2_{1,4}+\bk \abs{\gamma F^1_m(c^{\ell-1})g(c^{\ell-1}(\omega) ) \Delta^\ell\beta(\omega)}^2_{0,2},
		\end{equation*}
		and 
		\begin{equation*}
			\begin{split}
				\mathcal{T}^3_m(\omega,\ell,\abs{ \bar c}^2_{1,4},\abs{ \bar n}^2_{1,4})&=\bk\abs{n^{\ell-1}(\omega)}_{0,2}^2+\bk(m) \mathcal{T}^1_m(\omega,\ell) \abs{\bar n}^2_{1,4} +\bk(m)\mathcal{T}^2_m(\omega,\ell,\abs{ \bar c}^2_{1,4}))\\
				&\qquad+ \bk\abs{\bar n}^4_{1,4}+\bk(\mathcal{T}^2_m(\omega,\ell,\abs{ \bar c}^2_{1,4})))^2.
			\end{split}
		\end{equation*}
	\end{lemma}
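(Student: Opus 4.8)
The plan is to exploit the lower-triangular coupling of \re{s2.4}: once $\omega$ and the frozen arguments $(\bar c,\bar n)$ are fixed, the first equation determines $\bu^\ell$ from the data $\bu^{\ell-1}(\omega)$, $\bar n$ and the noise increment alone; the second then determines $c^\ell$ once $\bu^\ell$ is known; and the third determines $n^\ell$ once $\bu^\ell$ and $c^\ell$ are known. Since $B^m(\bu^{\ell-1},\cdot)$ and $B_1(\bu^\ell,\cdot)$ are linear in their second slot and every other nonlinearity sits on the frozen arguments $\bar c,\bar n$, each of the three equations is a \emph{linear} equation for its unknown. Hence no fixed-point argument is needed at this stage and existence will come together with uniqueness; I would simply solve the three equations in turn.

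For the first equation I would work in the finite-dimensional space $\bh_m$. The map $\bv\mapsto \bv+\eta h A\bv+hB^m(\bu^{\ell-1}(\omega),\bv)$ is linear, and pairing it with $\bv$ and invoking the antisymmetry $\langle B(\bu^{\ell-1},\bv),\bv\rangle=0$ (valid because $\bu^{\ell-1}\in\bh_m\subset\bH$ is divergence free and $\bv$ vanishes on $\partial\bo$) yields $\abs{\bv}_{0,2}^2+\eta h\abs{\nabla\bv}_{0,2}^2$. The operator is therefore coercive, hence injective and, being an endomorphism of a finite-dimensional space, bijective; this produces a unique $\bu^\ell\in\bh_m$. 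The first line of \re{s2.5} then follows by testing the equation with $\bu^\ell$, using the same antisymmetry to annihilate the convective term, Young's inequality on $(\mathbf{f}_\ell(\omega),\bu^\ell)$, and the pointwise bound $\valabs{\theta_m}\le 17(m+1)$ from \re{3.3} together with $\Phi\in\sobm^{1,\infty}$ to handle the term $B_0^m(\theta_m(\bar n),\phi)$.

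For the second and third equations I would use that $I+h\eps A_1$ and $I+h\delta A_1$ are isomorphisms from $D(A_1)$ onto $\elm^2$, since $A_1$ is a non-negative self-adjoint (Neumann Laplacian) operator. It then suffices to verify that the right-hand sides $\Psi_\ell(\omega,\bu^\ell,\bar c)$ and $\mathcal{Z}(\omega,\bu^\ell,c^\ell,\bar n)$ belong to $\elm^2$, which is immediate for $c^{\ell-1},n^{\ell-1}\in D(A_1)$, for the transport terms $B_1(\bu^\ell,\cdot)$ (with $\bu^\ell\in\bh_m$ smooth and $\bar c,\bar n\in\sobm^{1,4}$), for $B_2(\theta^{\eps_m}_m(\bar n),\bar c)$ (a bounded multiple of $\bar c$ by \re{3.3}), and for the noise term via \re{s1.4}. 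This gives unique $c^\ell,n^\ell\in D(A_1)$. For the estimate on $c^\ell$ I would test the second equation with $c^\ell$ and with $A_1c^\ell$ and use Young's inequality to absorb the top-order contributions, controlling $\bu^\ell$ through the norm equivalence on the finite-dimensional $\bh_m$ (which is the source of the constants $\bk(m)$ and of the factor $\mathcal{T}^1_m$) and the scalar $F^1_m$ through \re{s1.4}; this yields the second line of \re{s2.5}. The estimate on $n^\ell$ follows by testing the third equation with $n^\ell$.

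The delicate point will be the chemotactic term $B_3(\theta_m(\bar n),c^\ell)=-\nabla\cdot(\theta_m(\bar n)\nabla c^\ell)$, which must be controlled in $\elm^2$. Writing $B_3(\theta_m(\bar n),c^\ell)=-\theta_m'(\bar n)\,\nabla\bar n\cdot\nabla c^\ell-\theta_m(\bar n)\,\Delta c^\ell$, I would bound the first summand by $\bk\,\abs{\nabla\bar n}_{0,4}\abs{\nabla c^\ell}_{0,4}$ using $\valabs{\theta_m'}\le\br_\theta$ and the three-dimensional embedding $\h^2\hookrightarrow\sobm^{1,4}$, so that $\abs{\nabla c^\ell}_{0,4}^2\le\bk\,\mathcal{T}^2_m$ by the previously established $D(A_1)$-bound; a further Young's inequality on the product $\abs{\bar n}_{1,4}^2\,\mathcal{T}^2_m$ then generates exactly the terms $\bk\abs{\bar n}_{1,4}^4$ and $\bk(\mathcal{T}^2_m)^2$ appearing in $\mathcal{T}^3_m$. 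The second summand is controlled by $\valabs{\theta_m}\le 17(m+1)$ and $\abs{\Delta c^\ell}_{0,2}^2\le\bk\,\mathcal{T}^2_m$, giving the $\bk(m)\mathcal{T}^2_m$ contribution. This is the step where both the $\h^2$-regularity of $c^\ell$ secured in the previous equation and the uniform Lipschitz and boundedness properties \re{3.3} of the truncation $\theta_m$ are indispensable.
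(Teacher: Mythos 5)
Your proposal follows essentially the same route as the paper: solve the triangular system sequentially ($\bu^\ell$, then $c^\ell$, then $n^\ell$), get $\bu^\ell$ from coercivity of the linearized form (the paper invokes Lax–Milgram where you use finite-dimensional injectivity-implies-bijectivity), obtain $c^\ell,n^\ell\in D(A_1)$ by checking the right-hand sides lie in $\elm^2$ and inverting $I+h\eps A_1$, $I+h\delta A_1$ (the paper cites Grisvard's elliptic regularity theorem for exactly this invertibility onto $D(A_1)$), and derive \re{s2.5} with the same decomposition $B_3(\theta_m(\bar n),c^\ell)=\theta_m(\bar n)\Delta c^\ell+\theta_m'(\bar n)\nabla\bar n\cdot\nabla c^\ell$, the bounds \re{3.3}, and Young's inequality producing the $\bk\abs{\bar n}_{1,4}^4$ and $\bk(\mathcal{T}^2_m)^2$ terms. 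The only cosmetic deviations (testing with $A_1c^\ell$ rather than reading the $A_1c^\ell$-bound off the equation, and phrasing the invertibility as a self-adjointness fact rather than citing Grisvard) do not change the substance of the argument.
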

	 \begin{proof}[\textbf{Proof of Lemma \ref{Lemmas2.3}}]
		Let us fix $\omega\in \Omega$, $N\in \mathbb{N}$ and  $j\in\{1,...,N\}$. Let $(\bar{c},\bar{n})\in \sobm^{1,4}\times \sobm^{1,4}$ arbitrary but fixed. %We will first prove the existence of the unique $\bu^\ell$ satisfying \re{s2.4}$_1$ and \re{s2.5}$_2$, secondly with $\bu^\ell$ in hand, we prove the existence and uniqueness of $c^\ell$ satisfying \re{s2.4}$_2$ and \re{s2.5}$_2$. Later on, with both $\bu^\ell$ and $c^\ell$ in hand, we will show the unique solvability of \re{s2.4}$_3$ satisfying \re{s2.5}$_3$. 
		For any $\bu$ and $\bv\in \bh_m$, let us define the following bilinear and linear operators $a_\ell(\cdot,\cdot)$ and $F_\ell(\cdot)$, respectively by
		\begin{equation*}
			\begin{split}
				&a_\ell(\bu,\bv):=(\bu,\bv)+(\eta hA\bu +hB^m(\bu^{\ell-1}(\omega),\bu ),\bv), \text{ and }\\
				&F_\ell(\bv)=(hB_0^m(\theta_m(\bar n),\phi)+\mathbf{f}_\ell(\omega),\bv).
			\end{split}
		\end{equation*}
		Due to the fact that $(B^m(\bu^{\ell-1}(\omega),\bv),\bv)=0$, for any $\bv\in \bh_m$, we can   prove that the bilinear form $a_\ell$ is   coercive on $\bh_m$, and both $a_\ell$ and $F_\ell$ are continuous on this space. By the Lax–Milgram lemma (see, e.g. \cite[Lemma 2.2.1.1, P.85]{Grisvard}), there exists a unique solution $\bu^\ell\in \bh_m$ such that $a_\ell(\bu^\ell,\bv)= F_\ell(\bv)$, for all $\bv\in \bh_m$, that is, $\bu^\ell$ satisfies also \re{s2.4}$_1$. Now, we easily establish  \re{s2.5}$_1$ from multiplying \re{s2.4}$_1$ by $\bu^\ell$  using the Young inequality.
		
		Let  $\bu^\ell$ as obtained above. By the equivalence of all norm on $\bh_m$, and the embedding $\elm^4\hookrightarrow \elm^2$ we obtain from   \re{s2.5}$_1$ that
		\begin{equation*}
			\begin{split}
				\abs{\Psi_\ell(\omega,\bu^\ell,\bar c)}^2_{0,2}
				&\leq  \bk\abs{c^{\ell-1}}^2_{0,2}+\bk\abs{\bu^\ell}^2_{0,\infty}\abs{\nabla \bar c}^2_{0,2}\\
				&\qquad+\bk [(m+1)^2+\frac{81}{m^2}]\abs{\bar c}^2_{0,2}+\bk \abs{\gamma F^1_m(c^{\ell-1}) g(c^{\ell-1}(\omega) ) \Delta^\ell\beta(\omega)}^2_{0,2}\\
				&\leq \bk\abs{c^{\ell-1}}^2_{0,2}+\bk(m)\mathcal{T}^1_m(\omega,\ell)\abs{ \bar c}^2_{1,4}+\bk \abs{\gamma F^1_m(c^{\ell-1})g(c^{\ell-1}(\omega) ) \Delta^\ell\beta(\omega)}^2_{0,2}<\infty.
			\end{split}
		\end{equation*}
		Thus, $\Psi_\ell(\omega,\bu^\ell,\bar c)\in \elm^2$. By  \cite[Theorem 2.2.2.5, P.91]{Grisvard}
		there exists a unique $c^\ell\in D(A_1)$ satisfying	\re{s2.4}$_2$.  As in the proof of \re{s2.5}$_1$, we obtain
		\begin{equation*}
			\abs{c^\ell}^2_{0,2}+h\eps\abs{\nabla c^\ell}^2_{0,2}\leq \frac{1}{2}\abs{c^\ell}^2_{0,2}+\abs{\Psi_\ell(\omega,\bu^\ell,\bar c)}^2_{0,2},
		\end{equation*}
		which  implies that 
		\begin{equation*}
			\abs{c^\ell}^2_{0,2}+2h\eps\abs{\nabla c^\ell}^2_{0,2}\leq \bk\abs{c^{\ell-1}}^2_{0,2}+\bk(m)\mathcal{T}^1_m(\omega,\ell)\abs{ \bar c}^2_{1,2}+\bk \abs{\gamma F^1_m(c^{\ell-1})g(c^{\ell-1}(\omega) ) \Delta^\ell\beta(\omega)}^2_{0,2}.	
		\end{equation*}
		Observe also that from \re{s2.4}$_2$, we can prove  that 
		\begin{equation*}
			\abs{A_1c^\ell}^2_{0,2}\leq \frac{\bk}{h^2\eps^2}\abs{c^\ell}^2_{0,2}+\frac{\bk}{h^2\eps^2}\abs{\Psi_\ell(\omega,\bu^\ell,\bar c)}^2_{0,2}.
		\end{equation*}
		Now,  \re{s2.5}$_2$ follows from the two last inequality above.
		
		With $\bu^\ell$ and $c^\ell$ in hand, we now solve \re{s2.4}$_3$. For this purpose, we observe that by the definition of $B_1(\bu^\ell,\bar{n})$ and the fact that  $B_3(\theta_m(\bar n), c^\ell)=\theta_m(\bar n)\Delta c^\ell+\theta'_m(\bar n)\nabla\bar n\cdot\nabla c^\ell$, we see that
		\begin{equation*}
			\begin{split}
				\abs{\mathcal{Z}(\omega,\bu^\ell,c^\ell,\bar n)}^2_{0,2}&\leq \bk\abs{n^{\ell-1}(\omega)}_{0,2}^2+\bk \abs{B_1(\bu^\ell,  \bar n)}^2_{0,2} +\bk\abs{B_3(\theta_m(\bar n),c^\ell)}^2_{0,2}\\
				&\leq \bk\abs{n^{\ell-1}(\omega)}_{0,2}^2+\bk \abs{\bu^\ell}_{0,\infty}^2  \abs{\nabla\bar n}^2_{0,2} +\bk_m\abs{\Delta c^\ell}^2_{0,2}+ 2\bk_\theta\abs{\nabla\bar n}^2_{0,4}\abs{\nabla c^\ell}^2_{0,4}\\
				&\leq \bk\abs{n^{\ell-1}(\omega)}_{0,2}^2+\bk(m) \mathcal{T}^1_m(\omega,\ell) \abs{\bar n}^2_{1,4}\\
				&\qquad +\bk_m\abs{A_1c^\ell}^2_{0,2}+ \bk\abs{\bar n}^4_{1,4}+\bk\abs{A_1 c^\ell}^4_{0,2}<\infty,
			\end{split}
		\end{equation*}
		where we used the Young inequality and the embeddings $\elm^4\hookrightarrow \elm^2$ and $\h^1\hookrightarrow \elm^4$. 	This implies that $\mathcal{Z}(\omega,\bu^\ell,c^\ell,\bar n)\in \elm^2$ and by   Theorem 2.2.2.5 in \cite[P.91]{Grisvard} there exists  a unique $n^\ell\in D(A_1)$ such that \re{s2.4}$_3$ holds. Now,  from \re{s2.4}$_3$ we derive  that 
		\begin{equation*}
			\begin{split}
				\abs{n^\ell}^2_{0,2}+h\delta\abs{\nabla n^\ell}^2_{0,2}&\leq \frac{1}{2}\abs{n^\ell}^2_{0,2}+\abs{\mathcal{Z}(\omega,\bu^\ell,c^\ell,\bar n)}^2_{0,2}\\
				&\leq \frac{1}{2}\abs{n^\ell}^2_{0,2}+\bk\abs{n^{\ell-1}(\omega)}_{0,2}^2+\bk(m) \mathcal{T}^1_m(\omega,\ell) \abs{\bar n}^2_{1,4} \\
				&\qquad+\bk(m)\abs{A_1c^\ell}^2_{0,2}+ \bk\abs{\bar n }^4_{1,4}+\bk\abs{A_1 c^\ell}^4_{0,2}\\
				&\leq \frac{1}{2}\abs{n^\ell}^2_{0,2}+\bk\abs{n^{\ell-1}(\omega)}_{0,2}^2+\bk(m) \mathcal{T}^1_m(\omega,\ell) \abs{\bar n}^2_{1,2} \\
				&\qquad+\bk(m)\mathcal{T}^2_m(\omega,\abs{ \bar c}^2_{1,2})+ \bk\abs{\bar n }^4_{1,4}+\bk(\mathcal{T}^2_m(\omega,\ell,\abs{ \bar c}^2_{1,4})))^2,
			\end{split}
		\end{equation*}
		from which \re{s2.5}$_3$ follows. The Lemma \ref{Lemmas2.3} is then proved.
	\end{proof}
	
	Now, we define the maps $\Upsilon_\ell: \sobm^{1,4}\longrightarrow\bh_m$,  and  
	\begin{equation}\label{s2.6*}
		S_\ell: \sobm^{1,4}\times \sobm^{1,4}\longrightarrow D(A_1)\times D(A_1)\hookrightarrow \sobm^{1,4}\times \sobm^{1,4},
	\end{equation}
	as follows: For any $(\bar c, \bar n)\in \sobm^{1,4}\times \sobm^{1,4}$,  $\Upsilon_\ell( \bar n):=\bu^\ell$ where $\bu^\ell$ is the unique solution of \re{s2.4}$_1$ corresponding to $\bar n$ and  $S_\ell(\bar c, \bar n):= (c^\ell,n^\ell)$, such that $(\Upsilon_\ell(\bar n),c^\ell,n^\ell)$ solve \re{s2.3}. By the existence and uniqueness result obtained in Lemma \ref{Lemmas2.3} and the fact that $\h^2\hookrightarrow \sobm^{1,4}$, we infer that the maps $\Upsilon_\ell$ and $S_\ell$ are well-defined. In the next Lemma, we show a useful property of  the maps $\Upsilon_\ell$ and $S_\ell$.
	\begin{lemma}\label{Lemmas2.4}
		Let $(\bar c_1,\bar n_1)$, $(\bar c_2,\bar n_2)$ be two elements of $\sobm^{1,4}\times \sobm^{1,4}$. Then, 
		\begin{equation*}
			\abs{\Upsilon_\ell( \bar n_1)-\Upsilon_\ell( \bar n_2)}^2_{0,2}\leq \bk \abs{\nabla\Phi}_{0,\infty}^2\abs{\bar n_1-\bar n_2}^2_{1,4},
		\end{equation*}
		and 
		\begin{equation*}
			\begin{split}
				\abs{S_\ell(\bar c_1,\bar n_1)-S_\ell(\bar c_2,\bar n_2)}^2_{(\sobm^{1,4})^2}&\leq \bk\abs{ \bar c_1-\bar c_2}^2_{1,4} +\bk\abs{ \bar n_1-\bar n_2}^2_{1,4}(\mathcal{T}^1_m(\omega,\ell)+\abs{ \bar c_2}^2_{1,4}+\abs{ \bar n_2}^2_{1,4})\\
				&\qquad+\bk\abs{\bar n_1-\bar n _2}^2_{1,4}\abs{ \bar c_2}^2_{1,4}(1+\abs{\bar n_1}^2_{1,4})\\
				&\qquad+\bk\abs{ \bar c_1-\bar c_2}^2_{1,4}(1+\abs{\bar n_1}^2_{1,4})\\
				&\qquad+\bk \abs{\bar n_1-\bar n_2}^2_{1,4}(1+ \abs{\bar n_2}^2_{1,4})\mathcal{T}^2_m(\omega,\ell,\abs{ \bar c_2}^2_{1,4}),
			\end{split}
		\end{equation*}
		where $\mathcal{T}^2_m(\cdot,\cdot,\cdot)$ is defined in Lemma \ref{Lemmas2.3}. In particular, the map $S_\ell$ is continuous.
	\end{lemma}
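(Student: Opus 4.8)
The plan is to obtain both inequalities by subtracting, for the two data points, the equations \re{s2.4} that define $\Upsilon_\ell$ and $S_\ell$, and then to exploit the (bi)linear structure of the operators together with the a priori bounds \re{s2.5} already established in Lemma \ref{Lemmas2.3}. Throughout, set $\bu_i:=\Upsilon_\ell(\bar n_i)$ and $(c_i,n_i):=S_\ell(\bar c_i,\bar n_i)$ for $i=1,2$, and let $\bk$ denote a generic constant (depending on $m,h,\eps,\delta$ but not on the data) that may change from line to line.

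\emph{Velocity map.} I would subtract the two copies of \re{s2.4}$_1$. Since $\mathbf{f}_\ell(\omega)$ and $\bu^{\ell-1}(\omega)$ do not depend on $\bar n$, and $B_0^m$ is linear in its first slot, $\bu_1-\bu_2$ solves a linear equation whose only source is $h\,B_0^m(\theta_m(\bar n_1)-\theta_m(\bar n_2),\Phi)$. Testing with $\bu_1-\bu_2$ and using $(B^m(\bu^{\ell-1}(\omega),\bv),\bv)=0$ removes the convective term; then $\abs{B_0^m(\psi,\Phi)}_{0,2}\leq\abs{\psi}_{0,2}\abs{\nabla\Phi}_{0,\infty}$ (a consequence of $\pi_m\circ\Pl$ being a contraction on $\mathbb{L}^2$), the fact that $\theta_m$ is Lipschitz with a constant independent of $m$ by \re{3.3}, and Young's inequality give the first claimed estimate.

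\emph{The $c$-component.} Subtracting the two copies of \re{s2.4}$_2$, the data $c^{\ell-1}(\omega)$ and the noise increment cancel, so $(I+h\eps A_1)(c_1-c_2)$ equals a right-hand side $G\in\elm^2$ built from $B_1(\bu_1,\bar c_1)-B_1(\bu_2,\bar c_2)$ and $B_2(\theta^{\eps_m}_m(\bar n_1),\bar c_1)-B_2(\theta^{\eps_m}_m(\bar n_2),\bar c_2)$ (the additive constant $\eps_m$ cancels in the latter difference). Telescoping each difference one factor at a time and estimating in $\elm^2$ by means of the embeddings $\elm^4\hookrightarrow\elm^2$ and $\h^1\hookrightarrow\elm^4$, the equivalence of norms on $\bh_m$, the bound \re{s2.5}$_1$ for $\abs{\bu_i}_{0,2}$, and the velocity estimate above (which turns $\abs{\bu_1-\bu_2}_{0,2}$ into $\abs{\nabla\Phi}_{0,\infty}\abs{\bar n_1-\bar n_2}_{1,4}$), I would bound $\abs{G}_{0,2}$. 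The elliptic estimate for $(I+h\eps A_1)$ used in Lemma \ref{Lemmas2.3} then controls $\abs{c_1-c_2}_{0,2}$ and $\abs{A_1(c_1-c_2)}_{0,2}$ by $\abs{G}_{0,2}$, and the embedding $D(A_1)=\h^2\hookrightarrow\sobm^{1,4}$ upgrades this to $\abs{c_1-c_2}_{1,4}$, producing the contributions in $\abs{\bar c_1-\bar c_2}^2_{1,4}$ and in $\abs{\bar n_1-\bar n_2}^2_{1,4}\abs{\bar c_2}^2_{1,4}$.

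\emph{The $n$-component and the main obstacle.} Subtracting the two copies of \re{s2.4}$_3$ yields $(I+h\delta A_1)(n_1-n_2)=H$, with $H$ assembled from $B_1(\bu_1,\bar n_1)-B_1(\bu_2,\bar n_2)$ and $B_3(\theta_m(\bar n_1),c_1)-B_3(\theta_m(\bar n_2),c_2)$; the first of these is handled exactly as in the $c$-step and produces the factor $\abs{\bar n_1-\bar n_2}^2_{1,4}(\mathcal{T}^1_m(\omega,\ell)+\abs{\bar n_2}^2_{1,4})$. The genuinely delicate piece, and the main obstacle, is the $B_3$-difference: using the bilinearity of $B_3$ I would split it as $B_3(\theta_m(\bar n_1),c_1-c_2)+B_3(\theta_m(\bar n_1)-\theta_m(\bar n_2),c_2)$ and expand each through the identity $B_3(\theta_m(\bar n),\phi)=\theta'_m(\bar n)\,\nabla\bar n\cdot\nabla\phi+\theta_m(\bar n)\,\Delta\phi$ already used in Lemma \ref{Lemmas2.3}. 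Controlling it forces one to use simultaneously the Lipschitz bounds of $\theta_m$ and $\theta'_m$ from \re{3.3} (with the $O(m)$ Lipschitz constant of $\theta'_m$ absorbed into $\bk$), the full $\h^2$-regularity of $c_1,c_2$ through $\abs{A_1c_i}^2_{0,2}\leq\mathcal{T}^2_m(\omega,\ell,\abs{\bar c_i}^2_{1,4})$ from \re{s2.5}$_2$, and the $c$-difference estimate of the previous step to handle the factor $\Delta(c_1-c_2)=A_1(c_1-c_2)$. Hölder's and Young's inequalities then bound $\abs{H}_{0,2}^2$ by the remaining compound terms, after which the elliptic estimate for $(I+h\delta A_1)$ and $\h^2\hookrightarrow\sobm^{1,4}$ give $\abs{n_1-n_2}^2_{1,4}$. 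Adding the $c$- and $n$-bounds yields the stated inequality, and since its right-hand side is continuous in the data and vanishes when $(\bar c_1,\bar n_1)=(\bar c_2,\bar n_2)$, the continuity of $S_\ell$ follows at once. The only real difficulty is the bookkeeping: in each telescoping one must keep fixed the input whose norm is allowed to appear (namely $\abs{\bar c_2}_{1,4}$, $\abs{\bar n_1}_{1,4}$ or $\abs{\bar n_2}_{1,4}$) so as to land precisely on the nested products in the statement, such as $(1+\abs{\bar n_2}^2_{1,4})\,\mathcal{T}^2_m(\omega,\ell,\abs{\bar c_2}^2_{1,4})$.
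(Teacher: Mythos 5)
Your proposal is correct and follows essentially the same route as the paper's proof: subtract the defining equations, telescope the bilinear differences keeping $\bar c_2$, $\bar n_2$, $\bar n_1$ as the fixed factors, use the Lipschitz properties of $\theta_m,\theta'_m$ together with the bounds $\mathcal{T}^1_m$, $\mathcal{T}^2_m$ from Lemma \ref{Lemmas2.3}, and upgrade to $\sobm^{1,4}$ via $\h^2\hookrightarrow\sobm^{1,4}$. The only cosmetic difference is that the paper extracts the $\h^2$-control of the differences by testing with $\varphi^\ell+A_1\varphi^\ell$ (resp. $\psi^\ell+A_1\psi^\ell$) rather than invoking the resolvent estimate for $(I+h\eps A_1)$, which is an equivalent way of obtaining the same bound.
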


	\begin{prev}[\textbf{Proof of Lemma \ref{Lemmas2.4}} ]
		Let $(\bar c_i,\bar n_i)\in \sobm^{1,4}\times \sobm^{1,4}$,  $ (c^\ell_i,n^\ell_i)=S_\ell(\bar c_i,\bar n_i)$, $\bu^\ell_i=\Upsilon_\ell(\bar n_i)$, $i=1,2$.  We put $\bw^\ell=\bu^\ell_1-\bu^\ell_2$, $\varphi^\ell=c^\ell_1-c^\ell_2$, $\psi^\ell=n^\ell_1-n^\ell_2$, $\bar n=\bar n_1-\bar n_2$ and $\bar c=\bar c_1-\bar c_2$. Then, $(\bw^\ell, \varphi^\ell, \psi^\ell)$ satisfies the following system,
		\begin{equation}\label{s2.6}
			\begin{split}
				&	\bw^\ell+[\eta A\bw^\ell+B^m(\bu^{\ell-1}(\omega),\bw^\ell)-B_0^m(\theta_m(\bar n_1)-\theta_m(\bar n_2),\varPhi)]h=0,\text{ in } \bh_m,\\
				&\varphi^\ell+h\eps A_1\varphi^\ell=-B_1(\bu_1^\ell,  \bar c) h-B_1(\bw^\ell,  \bar c_2) h- B_2(\theta_m^{\eps_m}(\bar n_1),\bar c)h\\
				&\hspace{5cm}- B_2(\theta^{\eps_m}_m(\bar n_1)-\theta^{\eps_m}_m(\bar n_2),\bar c_2)h,\text{ in } \elm^2,\\
				&\psi^\ell+h\eps A_1\psi^\ell=-B_1(\bu_1^\ell,  \bar n) h-B_1(\bw^\ell,  \bar n_2) h- B_3(\theta_m(\bar n_1),\varphi^\ell)h\\
				&\hspace{5cm}- B_3(\theta_m(\bar n_1)-\theta_m(\bar n_2), c^\ell_2)h,\text{ in } \elm^2.
			\end{split}		
		\end{equation}
		From  the  Lipschitz property of $\theta_m$ and    \re{s2.6}$_1$  we derive that 
		\begin{equation*}
			\abs{\bw^\ell}_{0,2}^2+\eta \abs{\nabla \bw^\ell}^2_{0,2}\leq \frac{1}{2}\abs{\bw^\ell}_{0,2}^2+\bk \abs{\nabla\Phi}_{0,\infty}^2\abs{\bar n}^2_{0,2},
		\end{equation*}
		which with the embedding $\sobm^{1,4}\hookrightarrow \elm^4\hookrightarrow \elm^2$ imply that 
		\begin{equation}\label{s2.7}
			\abs{\bw^\ell}_{0,2}^2+2\eta \abs{\nabla \bw^\ell}^2_{0,2}\leq \bk \abs{\nabla\Phi}_{0,\infty}^2\abs{\bar n}^2_{1,4}.
		\end{equation}
		The first inequality of Lemma \ref{Lemmas2.4} is deduced from \re{s2.7}. In what follows, we prove the second inequality. 
		Since $\varphi^\ell\in D(A_1)$, we obtain  by taking  the $\elm^2$-inner product of \re{s2.6}$_2$ and $\varphi^\ell+A_1\varphi^\ell$  that 
		\begin{equation}\label{s2.8}
			\begin{split}
				\abs{\varphi^\ell}^2_{1,2}+h\eps\abs{\nabla \varphi^\ell}^2_{0,2}+h\eps\abs{A_1\varphi^\ell}^2_{0,2}&\leq \frac{1}{2}\abs{\varphi^\ell}^2_{0,2}+\frac{h\eps}{2}\abs{A_1\varphi^\ell}^2_{0,2}+ \bk \abs{B_1(\bu_1^\ell,  \bar c)}^2_{0,2}\\
				&\qquad+\bk \abs{B_1(\bw^\ell,  \bar c_2) }^2_{0,2}+\bk \abs{B_2(\theta^{\eps_m}_m(\bar n_1),\bar c)}^2_{0,2}\\
				&\qquad+\bk \abs{B_2(\theta^{\eps_m}_m(\bar n_1)-\theta^{\eps_m}_m(\bar n_2),\bar c_2)}^2_{0,2}.
			\end{split}
		\end{equation}
		Now, by using the equivalence of norm on $\bh_m$ and the fact that $\elm^4\hookrightarrow \elm^2$, we get
		\begin{equation*}
			\abs{B_1(\bu_1^\ell,  \bar c)}^2_{0,2}\leq \abs{\bu_1^\ell}^2_{0,\infty}\abs{\nabla \bar c}^2_{0,2}\leq \bk(m)\mathcal{T}^1_m(\omega,\ell)\abs{ \bar c}^2_{1,4}.
		\end{equation*}
		Similarly and using  \re{s2.7}, we have
		\begin{equation*}
			\abs{B_1(\bw^\ell,  \bar c_2) }^2_{0,2}\leq \abs{\bw^\ell}^2_{0,\infty}\abs{\nabla \bar c_2}^2_{0,2}\leq \bk(m)\abs{\nabla\Phi}_{0,\infty}^2\abs{\bar n}^2_{1,4}\abs{ \bar c_2}^2_{1,4}.
		\end{equation*}
		By using \re{3.3} and the fact that $\sobm^{1,4}\hookrightarrow \elm^4\hookrightarrow \elm^2$, we infer that
		\begin{equation*}
			\abs{B_2(\theta^{\eps_m}_m(\bar n_1),\bar c)}^2_{0,2}\leq [289(m+1)^2+\frac{81}{m^2}]\abs{\bar c}^2_{0,2}\leq\bk [(m+1)^2+\frac{81}{m^2}]\abs{\bar c}^2_{1,4}.
		\end{equation*}
		By using $\theta^{\eps_m}_m(\bar n_1)-\theta^{\eps_m}_m(\bar n_2)=\theta_m(\bar n_1)-\theta_m(\bar n_2)$, and  the Lipschitz property of $\theta_m$  and the embedding $\sobm^{1,4}\hookrightarrow \elm^\infty$, we get 
		\begin{equation*}
			\abs{B_2(\theta^{\eps_m}_m(\bar n_1)-\theta^{\eps_m}_m(\bar n_2),\bar c_2)}^2_{0,2}\leq \bk\abs{\bar c_2}^2_{0,\infty}\abs{\bar n}^2_{0,2}\leq \bk\abs{\bar c_2}^2_{1,4}\abs{\bar n}^2_{1,4}.
		\end{equation*}
		By combining the above inequalities, we infer from \re{s2.8} that 
		\begin{equation}\label{s2.9}
			\begin{split}
				\abs{\varphi^\ell}^2_{1,2}+2h\eps\abs{\nabla \varphi^\ell}^2_{0,2}+h\eps\abs{A_1\varphi^\ell}^2_{0,2}&\leq \bk\abs{\bar n}^2_{1,4}\abs{ \bar c_2}^2_{1,4}+\bk\abs{ \bar c}^2_{1,4}.
			\end{split}
		\end{equation}
		Since $\psi^\ell\in D(A_1)$, we derive from \re{s2.6}$_3$  that 
		\begin{equation}\label{s2.10}
			\begin{split}
				\abs{\psi^\ell}^2_{1,2}+h\delta\abs{\nabla \psi^\ell}^2_{0,2}+h\eps\abs{A_1\psi^\ell}^2_{0,2}&\leq \frac{1}{2}\abs{\psi^\ell}^2_{0,2}+\frac{h\delta}{2}\abs{A_1\psi^\ell}^2_{0,2}+ \bk \abs{B_1(\bu_1^\ell,  \bar n)}^2_{0,2}\\
				&\qquad+\bk \abs{B_1(\bw^\ell,  \bar n_2) }^2_{0,2}+\bk \abs{B_3(\theta_m(\bar n_1),\varphi^\ell)}^2_{0,2}\\
				&\qquad+\bk \abs{B_3(\theta_m(\bar n_1)-\theta_m(\bar n_2), c^\ell_2)}^2_{0,2}.
			\end{split}
		\end{equation}
		By using \re{s2.5}$_1$ and \re{s2.7}, we can easily prove that 
		\begin{equation}
			\abs{B_1(\bu_1^\ell,  \bar n)}^2_{0,2} +\bk \abs{B_1(\bw^\ell,  \bar n_2) }^2_{0,2}\leq \bk(m)\mathcal{T}^1_m(\omega,\ell)\abs{ \bar n}^2_{1,4}+\bk(m)\abs{\nabla\phi}_{0,\infty}^2\abs{\bar n}^2_{1,4}\abs{ \bar n_2}^2_{1,4}.\label{s2.11}
		\end{equation}
		We recall that  $B_3(\theta_m(\bar n_1),\varphi^\ell)$  can be rewritten as follows:
		\begin{equation*}
			B_3(\theta_m(\bar n_1),\varphi^\ell)=-\theta_m((\bar n_1))A_1 \varphi^\ell+\theta'_m((\bar n_1))\nabla(\bar n_1)\cdot\nabla \varphi^\ell.
		\end{equation*}
		Hence by using \re{3.3}, the H\"older inequality, the embedding $\h^1\hookrightarrow \elm^2$ and inequality \re{s2.9}, we arrive at
		\begin{equation}\label{s2.12}
			\begin{split}
				\abs{B_3(\theta_m(\bar n_1),\varphi^\ell)}^2_{0,2}&\leq 578(m+1)^2\abs{A_1\varphi^\ell}^2_{0,2}+2\br_\theta\abs{\nabla\bar n_1}^2_{0,4}\abs{\nabla\varphi^\ell}^2_{0,4}\\
				%&\leq 2(m+1)^2\abs{\bj_m(A_1\varphi^\ell)}^2_{0,2}+2\bk\abs{\bar n_1}^2_{1,4}\abs{ \varphi^\ell}^2_{2,2}\\
				&\leq 578(m+1)^2\abs{A_1\varphi^\ell}^2_{0,2}+2\bk\abs{\bar n_1}^2_{1,4}\abs{ \varphi^\ell}^2_{2,2}\\
				&\leq \bk\abs{\bar n}^2_{1,4}\abs{ \bar c_2}^2_{1,4}(1+\abs{\bar n_1}^2_{1,4})+\bk\abs{ \bar c}^2_{1,4}(1+\abs{\bar n_1}^2_{1,4}).
			\end{split}
		\end{equation}
		Now, we remark that
		\begin{equation*}
			\begin{split}
				\abs{(B_3(\theta_m(\bar n_1)-\theta_m(\bar n_2)),c_2^\ell)}^2_{0,2}&\leq 2\int_{\bo}\valabs{\theta_m(\bar n_1(x))-\theta_m(\bar n_2(x))\Delta c_2^\ell(x)}^2dx\\
				&\qquad+2\int_{\bo}\valabs{\nabla(\theta_m(\bar n_1(x))-\theta_m(\bar n_2(x)))\cdot\nabla c_2^\ell(x)}^2dx\\
				&=I_1+I_2.
			\end{split}
		\end{equation*}
		Similarly to the proof of \re{s2.12}, we have
		\begin{equation*}
			I_1\leq \bk\abs{\bar n}^2_{0,\infty}\abs{A_1c_2^\ell}^2_{0,2}\leq \bk\abs{\bar n}^2_{0,\infty}\abs{A_1c_2^\ell}^2_{0,2}\leq\bk \abs{\bar n}^2_{1,4}\mathcal{T}^2_m(\omega,\ell,\abs{ \bar c_2}^2_{1,4}).
		\end{equation*}
		Since $\nabla(\theta_m(\bar n_1)-\theta_m(\bar n_2))=\theta'_m(\bar n_1)\nabla\bar n+(\theta'_m(\bar n_1)-\theta'_m(\bar n_2))\nabla\bar n_2,$
		we see that
		\begin{equation*}
			\begin{split}
				I_2&\leq 2\int_{\bo}\valabs{\theta'_m(\bar n_1(x))\nabla\bar n(x)\cdot\nabla c_2^\ell(x)}^2dx\\
				&\qquad+\int_{\bo}\valabs{\theta'_m(\bar n_1(x))-\theta'_m(\bar n_2(x))}^2\valabs{\nabla\bar n_2(x)\cdot\nabla c_2^\ell(x)}^2dx.
			\end{split}
		\end{equation*}
		By the boundedness of  $\theta'_m$,  the H\"older inequality, the embedding $\h^1\hookrightarrow \elm^4$ and \re{s2.5}$_2$ we find that
		\begin{equation*}
\begin{split}
2\int_{\bo}\valabs{\theta'_m(\bar n_1(x))\nabla\bar n(x)\cdot\nabla c^\ell_2(x)}^2dx&\leq 2\br_\theta\abs{\nabla\bar n}^2_{0,4}\abs{\nabla c_2^\ell}^2_{0,4}\\
&\leq \bk\abs{\bar n}^2_{1,4}\abs{c_2^\ell}^2_{2,2}\\
&\leq \bk \abs{\bar n}^2_{1,4}\mathcal{T}^2_m(\omega,\ell,\abs{ \bar c_2}^2_{1,4}).
\end{split}
		\end{equation*}
		Thanks to the Lipschitz property of $\theta'_m$, the embedding $\sobm^{1,4}\hookrightarrow \elm^\infty$ and the H\"older inequality, we get
		\begin{equation*}
			\begin{split}
				\int_{\bo}\valabs{\theta'_m(\bar n_1(x))-\theta'_m(\bar n_2(x))}^2\valabs{\nabla\bar n_2(x)\cdot\nabla c^\ell_2(x)}^2dx&\leq \bk(m) \abs{\bar n}_{0,\infty}^2\int_{\bo}\valabs{\nabla\bar n_2(x)\cdot\nabla c^\ell_2(x)}^2dx\\
				&\leq \bk(m)\abs{\bar n}^2_{1,4}\abs{\bar n_2}^2_{1,4}\abs{c_2^\ell}^2_{2,2}\\
				&\leq \bk(m) \abs{\bar n}^2_{1,4}\abs{\bar n_2}^2_{1,4}\mathcal{T}^2_m(\omega,\ell,\abs{ \bar c_2}^2_{1,4}).
			\end{split}
		\end{equation*}
		By combining the above inequalities, we see that 
		\begin{equation*}
			\abs{(B_3(\theta_m(\bar n_1)-\theta_m(\bar n_2)),c^\ell_2)}^2_{0,2}\leq \bk \abs{\bar n}^2_{1,4}(1+ \abs{\bar n_2}^2_{1,4})\mathcal{T}^2_m(\omega,\ell,\abs{ \bar c_2}^2_{1,4}).
		\end{equation*}
		Plugging \re{s2.11}, \re{s2.12} and the last inequality in  \re{s2.10} we obtain
		\begin{equation*}
			\begin{split}
				\frac{1}{2}\abs{\psi^\ell}^2_{1,2}+\frac{h\delta}{2}\abs{\nabla \psi^\ell}^2_{0,2}+h\eps\abs{A_1\psi^\ell}^2_{0,2}&\leq \bk\abs{ \bar n}^2_{1,4}(\mathcal{T}^1_m(\omega,\ell)+\abs{\nabla\Phi}_{0,\infty}^2\abs{ \bar n_2}^2_{1,4})\\
				&\quad+\bk\abs{\bar n}^2_{1,4}\abs{ \bar c_2}^2_{1,4}(1+\abs{\bar n_1}^2_{1,4})+\bk\abs{ \bar c}^2_{1,4}(1+\abs{\bar n_1}^2_{1,4})\\
				&\quad+\bk \abs{\bar n}^2_{1,4}(1+ \abs{\bar n_2}^2_{1,4})\mathcal{T}^2_m(\omega,\ell,\abs{ \bar c_2}^2_{1,4}),
			\end{split}
		\end{equation*}
		from which along with  \re{s2.9} and  \re{s2.7}, we derive  the second inequality of Lemma \ref{Lemmas2.4}. This ends the proof of Lemma \ref{Lemmas2.4}.
	\end{prev}
	
	We now give the proof of Proposition 	\ref{propo3.7} which will be  split into two parts.

	\begin{prev}[\textbf{Proof of Proposition \ref{propo3.7}}] \hfill\\
	\noindent 	\textbf{Part 1: Proof of the existence.}	 To prove the existence result stated in the proposition, we will use the Leray-Schauder fixed point theorem (see e.g.  \cite[Theorem 5.4, P. 124]{Granas}). For this purpose, we observe that from Lemma \ref{Lemmas2.4} the map $S_\ell$ is continuous. Also, from Lemma \ref{Lemmas2.3} we see that   for any arbitrary $\bk>0$, $S_\ell (B_\bk)$ where  $$B_\bk=\{( c, n)\in \sobm^{1,4}\times \sobm^{1,4}: \abs{(n,c)}_{(\sobm^{1,4})^2}\leq \bk \},$$ is a bounded set of $\h^2\times \h^2$, hence compact in $\sobm^{1,4}\times \sobm^{1,4}$.  
		
		We also have the following claim.
		\begin{claim}\label{claim2.10}
			Let $\lambda\in (0,1)$. Then,  
			\begin{equation*}
				\bec(S_\ell):=\left\{ (n_\lambda,c_\lambda)\in \sobm^{1,4}\times \sobm^{1,4}:  (n_\lambda,c_\lambda)=\lambda S_\ell(n_\lambda,c_\lambda) \text{ for some } 0<\lambda<1 \right\}.
			\end{equation*}
			is uniformly bounded, i.e., there exists $\bk_\ell>0$ such that 
			\begin{equation}\label{s2.13}
				\abs{(n_\lambda,c_\lambda)}_{(\sobm^{1,4})^2}\leq \bk_\ell, \ \forall (n_\lambda,c_\lambda)\in \bec(S_\ell).
			\end{equation}
		\end{claim}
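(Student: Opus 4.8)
The plan is to verify the single non-trivial hypothesis of the Leray--Schauder theorem, namely the $\lambda$-uniform boundedness \re{s2.13}, by proving $\h^2$-estimates on any element of $\bec(S_\ell)$ that do not depend on $\lambda$ and then invoking the embedding $\h^2\hookrightarrow\sobm^{1,4}$. So I would fix $(\bar c,\bar n)\in\bec(S_\ell)$, i.e.\ $(\bar c,\bar n)=\lambda S_\ell(\bar c,\bar n)$ for some $\lambda\in(0,1)$, and set $(c^\ell,n^\ell):=S_\ell(\bar c,\bar n)$, so the fixed-point identity reads $\bar c=\lambda c^\ell$ and $\bar n=\lambda n^\ell$, and $\bu^\ell=\Upsilon_\ell(\bar n)$. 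Substituting $c^\ell=\lambda^{-1}\bar c$, $n^\ell=\lambda^{-1}\bar n$ into \re{s2.4}$_2$--\re{s2.4}$_3$, multiplying by $\lambda$ and using the bilinearity of $B_1,B_2,B_3$ gives the two elliptic identities
\begin{align*}
&(I+h\eps A_1)\bar c=\lambda c^{\ell-1}-\lambda h B_1(\bu^\ell,\bar c)-\lambda h B_2(\theta_m^{\eps_m}(\bar n),\bar c)+\lambda\gamma F^1_m(c^{\ell-1})g(c^{\ell-1})\Delta^\ell\beta,\\
&(I+h\delta A_1)\bar n=\lambda n^{\ell-1}-\lambda h B_1(\bu^\ell,\bar n)+h B_3(\theta_m(\bar n),\bar c).
\end{align*}
The decisive structural observation is that, since $\theta_m$ is bounded by $17(m+1)$, the bound $\abs{\bu^\ell}^2_{0,2}\leq\mathcal{T}^1_m(\omega,\ell)$ from \re{s2.5}$_1$ holds independently of $\bar n$; hence by equivalence of norms on $\bh_m$ also $\abs{\bu^\ell}_{0,\infty}\leq\bk(m)\sqrt{\mathcal{T}^1_m(\omega,\ell)}$ uniformly in $\lambda$, and the noise increment involves only the fixed datum $c^{\ell-1}$.

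Next I would run the energy method on each identity. Testing the $\bar c$-identity with $\bar c$ in $\elm^2$, the transport term vanishes by $(B_1(\bu^\ell,\bar c),\bar c)=0$ for the divergence-free $\bu^\ell$, while $(B_2(\theta_m^{\eps_m}(\bar n),\bar c),\bar c)=\int_\bo\theta_m^{\eps_m}(\bar n)\,\bar c^2\,dx\geq0$ since $\theta_m^{\eps_m}\geq0$; with $\lambda<1$ and Young's inequality only the fixed data survive on the right, so $\abs{\bar c}^2_{0,2}+h\eps\abs{\nabla\bar c}^2_{0,2}\leq C(\omega,\ell)$, giving a uniform $\h^1$-bound on $\bar c$. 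Rearranging the $\bar c$-identity as $h\eps A_1\bar c=\lambda(\cdots)-\bar c$ and estimating the right side in $\elm^2$ --- where $\abs{B_1(\bu^\ell,\bar c)}_{0,2}\leq\abs{\bu^\ell}_{0,\infty}\abs{\nabla\bar c}_{0,2}$ and $\abs{B_2(\theta_m^{\eps_m}(\bar n),\bar c)}_{0,2}\leq(17(m+1)+\eps_m)\abs{\bar c}_{0,2}$ are already controlled --- yields a uniform bound on $\abs{A_1\bar c}_{0,2}$, hence on $\abs{\bar c}_{1,4}$ by $\h^2\hookrightarrow\sobm^{1,4}$. The $\h^1$-bound for $\bar n$ is analogous: testing with $\bar n$, using $(B_1(\bu^\ell,\bar n),\bar n)=0$ and the integration-by-parts identity $(B_3(\theta_m(\bar n),\bar c),\bar n)=\int_\bo\theta_m(\bar n)\nabla\bar c\cdot\nabla\bar n\,dx$, the latter is bounded by $17(m+1)\abs{\nabla\bar c}_{0,2}\abs{\nabla\bar n}_{0,2}$ and absorbed by Young's inequality into the diffusion term, so $\abs{\bar n}_{1,2}$ is bounded uniformly in $\lambda$.

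The delicate point, and what I expect to be the main obstacle, is the $\h^2$-bound on $\bar n$. Writing $h\delta A_1\bar n=\lambda(\cdots)-\bar n$ and expanding $B_3(\theta_m(\bar n),\bar c)=-\theta_m(\bar n)\Delta\bar c-\theta'_m(\bar n)\nabla\bar n\cdot\nabla\bar c$, every term is controlled by the bounds already obtained except $\theta'_m(\bar n)\nabla\bar n\cdot\nabla\bar c$, whose $\elm^2$-norm is at most $\br_\theta\abs{\nabla\bar n}_{0,4}\abs{\nabla\bar c}_{0,4}$; here $\abs{\nabla\bar n}_{0,4}$ is precisely part of the $\sobm^{1,4}$-quantity to be bounded, so the estimate is a priori circular. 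I would break this loop with the three-dimensional Gagliardo--Nirenberg interpolation $\abs{\nabla\bar n}_{0,4}\leq\bk\,\abs{\bar n}^{7/8}_{2,2}\,\abs{\bar n}^{1/8}_{0,2}$: since $\abs{\nabla\bar c}_{0,4}\leq\bk\abs{\bar c}_{2,2}$ and $\abs{\bar n}_{0,2}$ are already bounded and $\abs{\bar n}_{2,2}\simeq\abs{A_1\bar n}_{0,2}+\abs{\bar n}_{0,2}$, the offending term is sublinear (exponent $7/8<1$) in $\abs{A_1\bar n}_{0,2}$, and Young's inequality absorbs it into $\tfrac12 h\delta\abs{A_1\bar n}_{0,2}$. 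This leaves a uniform bound on $\abs{A_1\bar n}_{0,2}$, hence on $\abs{\bar n}_{1,4}$. Collecting the two $\sobm^{1,4}$-bounds produces a constant $\bk_\ell=\bk_\ell(\omega,m,h)$ independent of $\lambda$ such that \re{s2.13} holds, completing the verification of Claim \ref{claim2.10}.
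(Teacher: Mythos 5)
Your proposal is correct and follows essentially the same route as the paper's proof: you derive the $\lambda$-scaled fixed-point system, obtain the velocity bound uniformly in the inputs, exploit the sign of the $B_2$ term and the identity $(B_3(\theta_m(\bar n),\bar c),\bar n)=\int_\bo\theta_m(\bar n)\nabla\bar c\cdot\nabla\bar n\,dx$ for the $\elm^2$/$\h^1$ estimates, break the circularity in $\abs{\nabla\bar n}_{0,4}$ by Gagliardo--Nirenberg interpolation with a sublinear power of $\abs{A_1\bar n}_{0,2}$ absorbed via Young, and conclude through $\h^2\hookrightarrow\sobm^{1,4}$. The only differences are cosmetic: the paper tests the equations with $A_1 c_\lambda$ and $A_1 n_\lambda$ rather than rearranging the elliptic identities, and applies Gagliardo--Nirenberg to $\nabla n_\lambda$ (exponents $\tfrac14,\tfrac34$) instead of to $\bar n$ (exponents $\tfrac18,\tfrac78$), which changes nothing of substance.
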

		With these two observations and Claim above, it follows from the Leray-Schauder fixed point theorem (see e.g.,  \cite[Theorem 5.4, P. 124]{Granas}), that there exists $(\bu^\ell,c^\ell,n^\ell)$ satisfying  \re{s2.3}.
		
		To complete the proof of the existence part, let us proof the claim.

		\begin{prev}[\textbf{Proof of Claim \ref{claim2.10}}]
			Let $(n_\lambda,c_\lambda)$ in $\bec(S_\ell)$, $\lambda\in (0,1)$. By definition of  $S_\ell$,  the triple  $(\bu_\lambda, c_\lambda,n_\lambda)$, with $\bu_\lambda=\Upsilon_\ell(n_\lambda)$, satisfies the following system:
			\begin{equation}\label{s2.14}
				\begin{split}
					&	\bu_\lambda-\lambda\bu^{\ell-1}+[\eta A\bu_\lambda+B^m(\bu^{\ell-1},\bu_\lambda)-\lambda B_0^m(\theta_m(n_\lambda),\varPhi)]h=\lambda\alpha \pi_m(
					\Pl(\mathbf{f}(\bu^{\ell-1})) ) \Delta^\ell W,\\
					&	c_\lambda-\lambda c^{\ell-1}+ [\eps A_1c_\lambda +B_1(\bu_\lambda,  c_\lambda)] h=- \lambda B_2(\theta^{\eps_m}_m(n_\lambda),c_\lambda)h+\lambda\gamma F^1_m(c^{\ell-1})g(c^{\ell-1})  \Delta^\ell\beta,\\
					&n_\lambda-\lambda n^{\ell-1}+ [\delta A_1n_\lambda+B_1(\bu_\lambda,  n_\lambda)]h= B_3(\theta_m(n_\lambda),c_\lambda)h, 
				\end{split}		
			\end{equation}
			in $\bh_m$, $\elm^2$ and $\elm^2$ respectively.
			
			Now, by multiplying  \re{s2.14}$_1$ in $\elm^2$ by  $\bu_\lambda$ and  by using \re{3.3},
			\begin{equation}\label{s2.15}
				\abs{\bu_\lambda}^2_{0,2}+2\eta h \abs{\nabla\bu_\lambda}^2_{0,2}\leq \mathcal{T}^1_m(\omega,\ell),
			\end{equation}
			where $\mathcal{T}^1_m(\omega,\ell)$ is defined in Lemma \ref{Lemmas2.3}. Similarly, but using   \re{s1.4}$_1$,  the H\"older and Young inequalities, we see that
			\begin{equation*}
				\begin{split}
					\abs{c_\lambda}^2_{0,2}+\eps h\abs{\nabla c_\lambda}^2_{0,2}&= \lambda(c^{\ell-1},c_\lambda)-\lambda h(B_2(\theta^{\eps_m}_m(n_\lambda),c_\lambda),c_\lambda)+\lambda\gamma F^1_m(c^{\ell-1})(g( c^{\ell-1})  \Delta^\ell\beta,c_\lambda)\\
					&\leq \frac{1}{2}\abs{c_\lambda}^2_{0,2}-\lambda h(B_2(\theta_m^{\eps_m}(n_\lambda),c_\lambda),c_\lambda)+\bk \abs{c^{\ell-1}}^2_{0,2}\\
					&\qquad+\bk \abs{F^1_m(c^{\ell-1})g( c^{\ell-1})  \Delta^\ell\beta}^2_{0,2}.
				\end{split}
			\end{equation*} 
			Since  $-\frac{8}{m}\leq \theta_m(n_\lambda)$, and $\theta^{\eps_m}_m(n_\lambda):=\theta_m(n_\lambda)+\eps_m>0$, we see that
			\begin{equation*}
				-h\lambda(B_2(\theta^{\eps_m}_m(n_\lambda),c_\lambda),c_\lambda)=-h\lambda\int_\bo\left(\theta_m(n_\lambda(x))+\eps_m\right)c_\lambda^2(x)dx\leq 0.
			\end{equation*}
			From these two last inequalities, we deduce that 
			\begin{equation}\label{s2.16}
				\abs{c_\lambda}^2_{0,2}+2\eps h\abs{\nabla c_\lambda}^2_{0,2}\leq \bk \abs{c^{\ell-1}(\omega)}^2_{0,2}+\bk \abs{F^1_m(c^{\ell-1})g( c^{\ell-1})  \Delta^\ell\beta}^2_{0,2}:=\mathcal{T}_3(\omega,\ell).
			\end{equation}
			By Lemma \ref{Lemmas2.3}, $S_\ell(n_\lambda,c_\lambda)\in D(A_1)\times D(A_1)$, hence  $A_1c_\lambda\in \elm^2$. Then, we can multiply \re{s2.14}$_2$ in $\elm^2$ by $A_1c_\lambda$, we see that
			\begin{equation*}
				\begin{split}
					\abs{\nabla c_\lambda}^2_{0,2}+\eps h\abs{A_1 c_\lambda}^2_{0,2}
					&\leq \frac{\eps h}{2}\abs{A_1c_\lambda}^2_{0,2}+\bk\abs{B_1(\bu_\lambda,c_\lambda)}_{0,2}^2+\bk\abs{B_2(\theta^{\eps_m}_m(n_\lambda),c_\lambda)}_{0,2}^2\\
					&\qquad+\bk \abs{c^{\ell-1}}^2_{0,2}+\bk \abs{F^1_m(c^{\ell-1})g( c^{\ell-1} ) \Delta^\ell\beta}^2_{0,2}.
				\end{split}
			\end{equation*} 
			By using the equivalence of norms on $\bh_m$, the inequalities \re{s2.15} and \re{s2.16}, we get
			\begin{equation*}
				\abs{B_1(\bu_\lambda,c_\lambda)}_{0,2}^2\leq \abs{\bu_\lambda}^2_{0,\infty}\abs{\nabla c_\lambda}^2_{0,2}\leq \bk(m)\abs{\bu_\lambda}^2_{0,2}\abs{\nabla c_\lambda}^2_{0,2}\leq \bk(m)\mathcal{T}^1_m(\omega,\ell) \mathcal{T}_3(\omega,\ell).
			\end{equation*}
			By using \re{3.3} and \re{s2.16}, we get
			\begin{equation*}
				\abs{B_2(\theta^{\eps_m}_m(n_\lambda),c_\lambda)}_{0,2}^2\leq 289(m+1)^2\abs{c_\lambda}^2_{0,2}\leq 289(m+1)^2\mathcal{T}_3(\omega,\ell).
			\end{equation*} 
			Since $0\leq F^1_m(c^{\ell-1})\leq 1$, 	we   infer from the  three   inequalities that 
			\begin{equation}\label{s2.17*}
				\abs{\nabla c_\lambda}^2_{0,2}+2\eps h\abs{A_1 c_\lambda}^2_{0,2}
				\leq \bk_m(\mathcal{T}^1_m(\omega,\ell)+1) \mathcal{T}_3(\omega,\ell)+\bk \abs{c^{\ell-1}}^2_{0,2}+\bk \abs{g (c^{\ell-1})  \Delta^\ell\beta}^2_{0,2},
			\end{equation} 
			from which and the embedding $\h^2\hookrightarrow \sobm^{1,4}$ and \re{s2.16} we derive  that 
			\begin{equation}\label{s2.17}
				\begin{split}
					\abs{ c_\lambda}^2_{1,4}\leq \bk \abs{c_\lambda}_{2,2}^2
					&\leq \bk_m\left(1+\mathcal{T}^1_m(\omega,\ell)\right) \mathcal{T}_3(\omega,\ell)\\
					&\qquad+\bk \abs{c^{\ell-1}}^2_{0,2}+\bk \abs{g (c^{\ell-1})  \Delta^\ell\beta}^2_{0,2}:=\mathcal{T}_4(\omega,\ell).
				\end{split}
			\end{equation} 
			By  multiplying  \re{s2.14}$_3$ with $n_\lambda\in \elm^2$  integrating by parts, using  \re{3.3},  \re{s2.16}  and the H\"older and the Young inequalities we have
			\begin{equation*}
				\begin{split}
					\abs{n_\lambda}^2_{0,2}+\delta h\abs{\nabla n_\lambda}^2_{0,2}&= \lambda(n^{\ell-1},c_\lambda)+ h(B_3(\theta_m(n_\lambda),c_\lambda),n_\lambda)\\
					&\leq \frac{1}{2}\abs{n_\lambda}^2_{0,2}+h\int_\bo \theta_m (n_\lambda(x))\nabla c_\lambda(x)\cdot\nabla n_\lambda(x)dx+\bk \abs{n^{\ell-1}}^2_{0,2}\\
					&\leq \frac{1}{2}\abs{n_\lambda}^2_{0,2}+h(m+1)\abs{\nabla c_\lambda}_{0,2} \abs{\nabla n_\lambda}_{0,2}  +\bk \abs{n^{\ell-1}}^2_{0,2}\\
					&\leq \frac{1}{2}\abs{n_\lambda}^2_{0,2}+\frac{\delta h}{2}\abs{\nabla n_\lambda}^2_{0,2}+\bk(m)\abs{\nabla c_\lambda}_{0,2}^2  +\bk \abs{n^{\ell-1}}^2_{0,2}\\
					&\leq \frac{1}{2}\abs{n_\lambda}^2_{0,2}+\frac{\delta h}{2}\abs{\nabla n_\lambda}^2_{0,2}+\frac{\bk(m)}{h\eps}\mathcal{T}_3(\omega,\ell)  +\bk \abs{n^{\ell-1}}^2_{0,2}.
				\end{split}
			\end{equation*} 
			The last line of the above chain of inequalities implies
			\begin{equation}\label{s2.18}
				\abs{n_\lambda}^2_{0,2}+\delta h\abs{\nabla n_\lambda}^2_{0,2}\leq \frac{\bk(m)}{h\eps}\mathcal{T}_3(\omega,\ell)  +\bk \abs{n^{\ell-1}}^2_{0,2}:=\mathcal{T}_5(\omega,\ell).
			\end{equation}
			By multiplying \re{s2.14}$_3$  with $A_1n_\lambda\in \elm^2$ and using  the equivalence of norms on $\bh_m$,   \re{s2.15} and \re{s2.17}, we derive that
			\begin{equation*}
				\begin{split}
					\abs{\nabla n_\lambda}^2_{0,2}+\delta h\abs{A_1n_\lambda}^2_{0,2}
					&\leq \frac{\delta h}{4}\abs{A_1n_\lambda}^2_{0,2}+\bk\abs{B_1(\bu_\lambda,n_\lambda)}_{0,2}^2+\bk\abs{B_3(\theta_m(n_\lambda),c_\lambda)}_{0,2}^2\\
					&\leq \frac{\delta h}{4}\abs{A_1n_\lambda}^2_{0,2}+\bk\abs{\bu_\lambda}_{0,\infty}^2\abs{\nabla n_\lambda}^2_{0,2}+\bk\abs{B_3(\theta_m(n_\lambda),c_\lambda)}_{0,2}^2\\
					&\leq \frac{\delta h}{4}\abs{A_1n_\lambda}^2_{0,2}+\bk(m)\mathcal{T}^1_m(\omega,\ell)\mathcal{T}_5(\omega,\ell)+\bk\abs{B_3(\theta_m(n_\lambda),c_\lambda)}_{0,2}^2.
				\end{split}
			\end{equation*} 
			Since $B_3(\theta_m(n_\lambda),c_\lambda)=-\theta_m(n_\lambda)A_1c_\lambda+\theta'_m(n_\lambda)\nabla n_\lambda\cdot\nabla c_\lambda$, we use the embedding $\h^1\hookrightarrow \elm^4$,  \re{3.3} \re{s2.17*} and \re{s2.17} to derive that
			\begin{equation*}
				\begin{split}
					\abs{B_3(\theta_m(n_\lambda),c_\lambda)}_{0,2}^2&\leq 578(m+1)^2\abs{A_1c_\lambda}^2_{0,2}+2\br_\theta\abs{\nabla n_\lambda}^2_{0,4}\abs{\nabla c_\lambda}^2_{0,4}\\
					&\leq 578(m+1)^2\abs{A_1c_\lambda}^2_{0,2}+2\br_\theta\abs{\nabla n_\lambda}^2_{0,4}\abs{ c_\lambda}^2_{2,2}\\
					&\leq \bk(m+1)^2(\bk(m)\mathcal{T}^1_m(\omega,\ell)+(m+1)^2) \mathcal{T}_3(\omega,\ell)\\
					&\qquad+2\bk (m+1)^2\abs{c^{\ell-1}}^2_{0,2}+2\bk (m+1)^2\abs{F^1_m(c^{\ell-1})g( c^{\ell-1})  \Delta^\ell\beta}^2_{0,2}\\
					&\qquad+2\br_\theta\abs{\nabla n_\lambda}^2_{0,4}\mathcal{T}_4(\omega,\ell).
				\end{split}
			\end{equation*}
			Now, we control the term $\abs{\nabla n_\lambda}^2_{0,4}$. To start, we note that since $n_\lambda\in D(A_1)$, we can obtain from \cite[Proposition 7.2, P. 404]{Taylor}  that $\abs{ n_\lambda}_{2,2}\leq \br  \abs{A_1 n_\lambda}_{0,2}+\br  \abs{ n_\lambda}_{1,2}$. By using the three dimensional Gagliardo-Nirenberg inequality, the Young inequality and \re{s2.18} we get
			\begin{equation*}
				\begin{split}
					\abs{\nabla n_\lambda}^2_{0,4}&\leq \bk \abs{\nabla n_\lambda}^\frac{1}{2}_{0,2}\abs{\nabla n_\lambda}^\frac{3}{2}_{1,2}\\
					%	&\leq \bk \abs{\nabla n_\lambda}^\frac{1}{2}_{0,2}\abs{ n_\lambda}^\frac{3}{2}_{2,2}\\
					&\leq \bk \abs{\nabla n_\lambda}^\frac{1}{2}_{0,2}\abs{A_1 n_\lambda}^\frac{3}{2}_{0,2}+\br   \abs{\nabla n_\lambda}^\frac{1}{2}_{0,2}\abs{ n_\lambda}^\frac{3}{2}_{1,2}\\
					&\leq  \frac{\delta h}{8\br_\theta \mathcal{T}_4(\omega,\ell)}\abs{A_1 n_\lambda}^2_{0,2}+\bk\abs{\nabla n_\lambda}^2_{0,2}+\br   \abs{\nabla n_\lambda}^\frac{1}{2}_{0,2}\abs{ n_\lambda}^\frac{3}{2}_{1,2}\\
					&\leq  \frac{\delta h}{8\br_\theta \mathcal{T}_4(\omega,\ell)}\abs{A_1 n_\lambda}^2_{0,2}+\bk \mathcal{T}_5(\omega,\ell)+\bk (\mathcal{T}_5(\omega,\ell))^2.
				\end{split}
			\end{equation*}
			So, in summary we have
			\begin{equation*}
				\begin{split}
					\abs{B_3(\theta_m(n_\lambda),c_\lambda)}_{0,2}^2
					&\leq 2\bk(m+1)^2(\bk(m)\mathcal{T}^1_m(\omega,\ell)+\bk(m+1)^2) \mathcal{T}_3(\omega,\ell)\\
					&\qquad+2\bk (m+1)^2\abs{c^{\ell-1}}^2_{0,2}+2\bk (m+1)^2\abs{F^1_m(c^{\ell-1})g( c^{\ell-1})  \Delta^\ell\beta}^2_{0,2}\\
					&\qquad+\frac{\delta h}{4}\abs{A_1 n_\lambda}^2_{0,2}+\bk \mathcal{T}_5(\omega,\ell)+\bk (\mathcal{T}_5(\omega,\ell))^2\\
					&:=\frac{\delta h}{4}\abs{A_1 n_\lambda}^2_{0,2}+\bk \mathcal{T}_6(\omega,\ell).
				\end{split}
			\end{equation*}
			By replacing this inequality in the first inequality after \re{s2.18}, we obtain
			\begin{equation*}
				\begin{split}
					\abs{\nabla n_\lambda}^2_{0,2}+\delta h\abs{A_1n_\lambda}^2_{0,2}
					&\leq \frac{\delta h}{2}\abs{A_1n_\lambda}^2_{0,2}+\bk(m)\mathcal{T}^1_m(\omega,\ell)\mathcal{T}_5(\omega,\ell)+\bk \mathcal{T}_6(\omega,\ell), 
				\end{split}
			\end{equation*} 
			which implies that 
			\begin{equation*}
				\abs{\nabla n_\lambda}^2_{0,2}+2\delta h\abs{A_1n_\lambda}^2_{0,2}
				\leq \bk(m)\mathcal{T}^1_m(\omega,\ell)\mathcal{T}_5(\omega,\ell)+\bk \mathcal{T}_6(\omega,\ell). 
			\end{equation*} 
			Since $\h^2\hookrightarrow \sobm^{1,4}$,  we deduce from this inequality and \re{s2.18} that 
			\begin{equation*}
				\abs{  n_\lambda}^2_{1,4}
				\leq \bk  \mathcal{T}_5(\omega,\ell)+\bk(m)\mathcal{T}^1_m(\omega,\ell)\mathcal{T}_5(\omega,\ell)+\bk \mathcal{T}_6(\omega,\ell),
			\end{equation*} 
			which along with  \re{s2.17} imply that 
			\begin{equation*}
				\abs{  c_\lambda}^2_{1,4}+\abs{  n_\lambda}^2_{1,4}
				\leq \bk  \mathcal{T}_4(\omega,\ell)+ \bk  \mathcal{T}_5(\omega,\ell)+\bk(m)\mathcal{T}^1_m(\omega,\ell)\mathcal{T}_5(\omega,\ell)+\bk \mathcal{T}_6(\omega,\ell),
			\end{equation*} 
			and the boundedness \re{s2.13} follows with $$\bk_\ell:=\sqrt{\bk  \mathcal{T}_4(\omega,\ell)+ \bk  \mathcal{T}_5(\omega,\ell)+\bk(m)\mathcal{T}^1_m(\omega,\ell)\mathcal{T}_5(\omega,\ell)+\bk \mathcal{T}_6(\omega,\ell)}.$$
			This ends the proof of Claim \ref{claim2.10}.
		\end{prev}
		
	\noindent	\textbf{Part 2: Proof of the measurability.}
		In this part,  we prove that for any $\ell\in \{0,1,2,...,N\}$,  $(\bu^\ell, c^\ell,n^\ell)$ is a random variable on the filtered  probability space $(\Omega, \mathcal{F}_{t_\ell},  \mathbb{P})$. More precisely, we prove that for any $\ell\in \{0,1,2,...,N\}$,
		\begin{equation*}
			(\bu^\ell, c^\ell,n^\ell):(\Omega, \mathcal{F}_{t_\ell})\longrightarrow \left(\bh_m\times \sobm^{1,4}\times \sobm^{1,4} ,\mathcal{B}\left(\bh_m\times \sobm^{1,4}\times \sobm^{1,4}\right)\right),
		\end{equation*} 
		is measurable. For this aim, it is sufficient to show that  one can find a Borel measurable map 
		\begin{equation*}
			\Gamma: \bh_m\times \sobm^{1,4}\times \sobm^{1,4}\times \mathbb{R}^3\times \mathbb{R}^3\longrightarrow\bh_m\times D(A_1)\times D(A_1),
		\end{equation*}
		such that for any  $\ell\in \{0,1,2,...,N\}$, $(\bu^\ell, c^\ell,n^\ell)=\varGamma(\bu^{\ell-1}, c^{\ell-1},n^{\ell-1},\Delta^\ell W,\Delta^\ell\beta)$, where $\Delta^\ell Y=(\Delta^\ell y_1,\Delta^\ell y_2,\Delta^\ell y_3)$ with $Y:=(y_1,y_2,y_3)\in \{w,\beta\}$. In fact,
		if such claim is true, then by exploiting the $\mathcal{F}_{t_\ell}$-measurability of $\Delta^\ell W$ and $\Delta^\ell \beta$, one can argue by induction and show that if $(\bu^0,c^0,n^0)$ is  $\mathcal{F}_{0}$-measurable then $\varGamma(\bu^{\ell-1}, c^{\ell-1},n^{\ell-1},\Delta^\ell W,\Delta^\ell\beta)$ is $\mathcal{F}_{t_\ell}$-measurable; hence $(\bu^\ell, c^\ell,n^\ell)$ is $\mathcal{F}_{t_\ell}$-measurable due to the fact that the composition of two measurable maps is also a measurable map. Thus, it remains to prove the existence of $\Gamma$.  For this purpose we will construct a multi-valued map which is taking value on nonempty closed subsets and that its graph is closed and apply \cite[Theorem 3.1]{Bensoussan} (see also  \cite[Theorem A.3, Appendix A]{Glatt})  to infer the existence of a such   measurable map $\Gamma$. To simplify the notation, let us set $X=\bh_m\times \sobm^{1,4}\times \sobm^{1,4}\times \mathbb{R}^3\times \mathbb{R}^3$ and define the multi-valued map 
		$$\Lambda: X\to \bh_m\times D(A_1)\times D(A_1)$$
		as follows: For any $(\bu', c',n', \eta'_W,\eta'_\beta)\in X$,  $\Lambda (\bu', c',n', \eta_W,\eta_\beta)$  is a solution of system \re{s2.3} with $\bu^{\ell-1}=\bu'$, $c^{\ell-1}=c'$, $n^{\ell-1}=n'$, $\Delta^\ell W =\eta'_w$ and $\Delta^\ell \beta=\eta'_\beta$. We note that, $\Lambda$ is a multi-valued map due to the fact that the solution of system \re{s2.3} is not unique. From the construction of  $\Lambda (\bu', c',n', \eta_W,\eta_\beta)$ we know that there exists two mappings $T_*:\sobm^{1,4}\to H_m$, $S_*:\sobm^{1,4}\times \sobm^{1,4}\to D(A_1)\times D(A_1)\hookrightarrow \sobm^{1,4}\times \sobm^{1,4}$  (see \re{s2.6*}) and  at least one element  $(\bar c,\bar n)\in \sobm^{1,4}\times \sobm^{1,4}$ such that  $\Lambda (\bu', c',n', \eta_W,\eta_\beta)=(\Upsilon_*(\bar n),S_*(\bar c,\bar n))$. This prove that $\Lambda$ is well defined as multi-valued map and also that for  any $(\bu', c',n', \eta'_W,\eta'_\beta)\in X$, $\Lambda (\bu', c',n', \eta_W,\eta_\beta)$ is  not empty. Let us fix arbitrary   $(\bu', c',n', \eta'_W,\eta'_\beta)\in X$. Our aim now is to prove that $\Lambda (\bu', c',n', \eta_W,\eta_\beta)$  is a closed subset of $X$. By construction of $\Lambda$, we note that 
		\begin{equation*}
			\Lambda (\bu', c',n', \eta'_W,\eta'_\beta)=\left\{ (\bar \bu,\bar c, \bar n)\in \bh_m\times \sobm^{1,4}\times \sobm^{1,4}:  (\bar \bu,\bar c, \bar n)= (\Upsilon_*(\bar n),S_*(\bar c,\bar n))\right\}.
		\end{equation*}
		Since  $\Upsilon_*:\sobm^{1,4}\to H_m$ and $S_*:\sobm^{1,4}\times \sobm^{1,4}\to \sobm^{1,4}\times \sobm^{1,4}$  are continuous, we derive that $\Lambda (\bu', c',n', \eta_W,\eta_\beta)$ is a closed subset of $X$ (since it  is the pre-image of the closed set ${0_X}$ by the continuous map $(I_{\bh_m},I_{\sobm^{1,4}},I_{\sobm^{1,4}})-(T,S)$, with $\Upsilon(\bar \bu,\bar c, \bar n)=\Upsilon_*(\bar n)$ and $S(\bar \bu,\bar c, \bar n)=S_*(\bar c, \bar n)$). It remains to prove that the graph of $\Lambda$ is closed in the sense given in \cite[Theorem A.3, Appendix A]{Glatt}( see also \cite[Theorem 3.1]{Bensoussan}). Let $x=(\bar \bu, \bar c,\bar n, \bar \eta_W,\bar \eta_\beta)\in X$ and $y=(\bu,  c,n)\in \bh_m\times D(A_1)\times D(A_1)$.  Let $\{x_j=(\bar  \bu^j, \bar c^j,\bar n^j,\bar  \eta^j_W,\bar \eta^j_\beta)\}_{j\geq 1}$ be a sequence of $X$ and  $\{y_j=( \bu^j,  c^j,n^j)\}_{j\geq 1}$ be a sequence of $\bh_m\times D(A_1)\times D(A_1)$ such that when $j$ goes to $\infty$,
		\begin{equation}\label{s2.19}
			x^j\longrightarrow x, \text{ in } X, \text{ and } y^j\longrightarrow y, \text{ in }\bh_m\times D(A_1)\times D(A_1).
		\end{equation}
		Let us assume that for any integer $j\geq 1$, $y_j\in \Lambda x_j$ and prove that  $y\in \Lambda x$. By definition of $\Lambda$, $y_j\in \Lambda x_j$  means that $y^j$ and $x^j$ satisfy the following system for any integer $j\geq 1$,
		\begin{equation*}
			\begin{split}
				&	\bu^j-\bar  \bu^j+[\eta A\bu^j+B^m(\bar  \bu^j,\bu^j)-B_0^m(\theta_m(n^j),\phi)]h=\alpha \pi_m (\Pl(\mathbf{f}(\bar  \bu^j) ))\Delta^\ell (\bar  \eta^j_W),\\
				&	c^j-\bar c^j+ [\eps A_1c^j +B_1(\bu^j,  c^j)] h=- B_2(\theta^{\eps_m}_m(n^j),c^j)h+\gamma F^1_m(\bar c^j)g( \bar c^j) \Delta^\ell\bar \eta^j_\beta,\\
				&n^j-\bar n^j+ [\delta A_1n^j+B_1(\bu^j,  n^j)]h= B_3(\theta_m(n^j), c^j)h, 
			\end{split}		
		\end{equation*}
		in $\bh_m$, $\elm^2$ and $\elm^2$ respectively. By using the equivalence of norms on the space $\bh_m$, the Lipschitz property of $\theta_m$ and $\theta'_m$, the property \re{3.3}, the inequality \re{s1.5},  the convergence \re{s2.19}, and the fact that each convergence sequence is bounded,  we can pass to the limit as $j$ goes to $\infty$ in this last system to derive that $y\in \Lambda x$. Then, by applying \cite[Theorem A.3, Appendix A]{Glatt}, see also \cite[Theorem 3.1]{Bensoussan},  we derive the existence of a measurable map
		\begin{equation*}
			\Gamma: (X,\mathcal{B}(X))\longrightarrow  (\bh_m\times D(A_1)\times D(A_1),\mathcal{B}(\bh_m\times D(A_1)\times D(A_1))),
		\end{equation*} 
		such that for any $x=( \bu,  c, n, \eta_W,\eta_\beta)\in X$, $ \Gamma(\bu,  c, n, \eta_W,\eta_\beta)\in \Lambda(\bu,  c, n, \eta_W,\eta_\beta)$. Let us remark that due to the continuity of the embedding $D(A_1)\hookrightarrow \sobm^{1,4}$, $\Gamma$ is also measurable from $(X,\mathcal{B}(X))$ to $(\bh_m\times \sobm^{1,4}\times \sobm^{1,4},\mathcal{B}(\bh_m\times \sobm^{1,4}\times \sobm^{1,4}))$ and the measurability of solution $\{(\bu^\ell,c^\ell,n^\ell)\}_{\ell=0}^N$ follows by an induction method as explained at the beginning.
	\end{prev}
	
Next, we are going to prove  two lemmas concerning uniform estimate of semi-discrete solutions. Before the statement of the first lemma, let us remark that for any Hilbert space $H$, we have 
	\begin{equation}\label{s2.20}
		(x-y,2x)_H=\abs{x}^2_H-\abs{y}^2_H+\abs{x-y}^2_H,\ \forall x,y\in H,
	\end{equation}
	also, for any $M\in \mathbb{N}$, 
	\begin{equation}\label{3.33}
		\left(\sum_{i=1}^Ma_i\right)^2\leq M\sum_{i=1}^Ma_i^2,\  \forall a_i\geq 0.
	\end{equation}
	Now, we prove the following result.
	\begin{lemma}\label{Lemmas2.5}
		For any $m\geq 1$, there exists a constant $\bk_m> 0$ such that for any fixed $N\in \mathbb{N}$,
		\begin{equation}\label{s2.21}
			\begin{split}
				&\max_{1\leq \ell\leq N}\abs{\bu^\ell}^2_{0,2}+\sum_{i=1}^N\abs{\bu^i-\bu^{i-1}}^2_{0,2}+h\eta \sum_{i=1}^N \abs{\nabla\bu^i}^2_{0,2}\leq \abs{\bu_{0}^m}^2_{0,2}+\bk_m\abs{\nabla \phi}^2_{0,\infty},\ \forall\omega\in \Omega,\\
				&\be\max_{1\leq \ell\leq N}  \abs{c^\ell}^2_{0,2}+\frac{1}{2} \be\sum_{i=1}^N\abs{c^i-c^{i-1}}^2_{0,2}+h\eps  \be \sum_{i=1}^N\abs{\nabla c^i}^2_{0,2}\leq \abs{c^m_0}^2_{0,2} + 18\gamma^2 m^2T,\\
				&\be\max_{1\leq \ell\leq N}  \abs{c^\ell}^4_{0,2}+\eps^2\be \left(  h\sum_{i=1}^N\abs{\nabla c^i}^2_{0,2}\right)^2\leq 2\abs{c^m_0}^2_{0,2} + 1944\gamma^4 m^4T^2,\\
				&\be \max_{1\leq \ell\leq N}\abs{\nabla c^\ell}^2_{0,2}+\frac{1}{2}\be \sum_{i=1}^N\abs{\nabla c^i-\nabla c^{i-1}}^2_{0,2}+h \be\sum_{i=1}^N\abs{A_1 c^i}^2_{0,2}\leq \abs{c^{m}_0}^2_{2,2}+\bk_m,\\
				&\be \max_{1\leq \ell\leq N}\abs{n^\ell}^2_{0,2}+\be \sum_{i=1}^N\abs{n^i-n^{i-1}}^2_{0,2}+\delta \be h\sum_{i=1}^N\abs{\nabla n^i}^2_{0,2}\leq \abs{n^m_0}^2_{0,2} +\bk_m,\\
				&\be \max_{1\leq \ell\leq N}\abs{n^\ell}^4_{0,2}+\be \left(\sum_{i=1}^N\abs{n^i-n^{i-1}}^2_{0,2}\right)^2+\delta^2 \be \left(h\sum_{i=1}^N\abs{\nabla n^i}^2_{0,2}\right)^2\leq 2\abs{n^m_0}^4_{0,2} +\bk_m.
			\end{split}
		\end{equation}
	\end{lemma}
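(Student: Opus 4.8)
The plan is to establish the six bounds one after another by the discrete energy method: I would test each line of \re{s2.3} against a suitable multiplier, invoke the discrete identity \re{s2.20}, exploit the structural cancellations of the (divergence-free, boundary-vanishing) transport terms, and then sum in $\ell$ and apply a discrete Gronwall argument before passing to the expectation. The estimates are not independent, so the order matters: the velocity bound \re{s2.21}$_1$ is self-contained, the $\elm^2$-- and fourth-moment bounds for $c$ feed into its $\h^1$-bound, and the latter together with the velocity bound is what closes the $n$-estimates. I would therefore prove them in the order \re{s2.21}$_1$ to \re{s2.21}$_6$.

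For \re{s2.21}$_1$ I would take the $\elm^2$-inner product of \re{s2.3}$_1$ with $2\bu^\ell$. The identity \re{s2.20} produces $\abs{\bu^\ell}^2_{0,2}-\abs{\bu^{\ell-1}}^2_{0,2}+\abs{\bu^\ell-\bu^{\ell-1}}^2_{0,2}$, the Stokes term gives $2\eta h\abs{\nabla\bu^\ell}^2_{0,2}$, and $(B^m(\bu^{\ell-1},\bu^\ell),\bu^\ell)=0$. The coupling $B_0^m(\theta_m(n^\ell),\phi)$ is handled by $\valabs{\theta_m}\le 17(m+1)$ from \re{3.3}, the factor $\abs{\nabla\phi}_{0,\infty}$ and Young's inequality. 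The noise is the delicate point: since $\mathbf{f}(\bu^{\ell-1})\Delta^\ell W=(\Delta^\ell W\cdot\nabla)\bu^{\ell-1}$ is a transport term with a constant (hence divergence-free) coefficient and $\bu^{\ell-1}$ vanishes on $\partial\bo$, one has $(\mathbf{f}(\bu^{\ell-1})\Delta^\ell W,\bu^{\ell-1})=0$, so testing against $\bu^\ell=\bu^{\ell-1}+(\bu^\ell-\bu^{\ell-1})$ leaves only a cross term, whose quadratic variation is absorbed by $\abs{\bu^\ell-\bu^{\ell-1}}^2_{0,2}$ and by the extra viscosity $\tfrac{\alpha^2}{2}$ sitting inside $\eta$. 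This is the discrete counterpart of the energy conservation of Stratonovich transport noise, and it is what makes the bound hold for every $\omega$; summation and the equivalence of norms on the finite-dimensional $\bh_m$ then yield \re{s2.21}$_1$.

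The $\elm^2$-bounds \re{s2.21}$_2$ and \re{s2.21}$_3$ for $c$ go the same way, testing \re{s2.3}$_2$ with $2c^\ell$: $(B_1(\bu^\ell,c^\ell),c^\ell)=0$ and $-(B_2(\theta_m^{\eps_m}(n^\ell),c^\ell),c^\ell)=-\int_{\bo}\theta_m^{\eps_m}(n^\ell)\valabs{c^\ell}^2\le 0$ since $\theta_m^{\eps_m}>0$. Crucially the martingale part of the noise vanishes \emph{pathwise}, because for $g(c^{\ell-1})\Delta^\ell\beta=\sum_k(\mathbf{g}_k\cdot\nabla c^{\ell-1})\Delta^\ell\beta_k$ one has $(\mathbf{g}_k\cdot\nabla c^{\ell-1},c^{\ell-1})=0$ by Assumption \ref{assum1}(ii)--(iii); hence testing against $c^\ell=c^{\ell-1}+(c^\ell-c^{\ell-1})$ again leaves a cross term, half of which is absorbed by $\abs{c^\ell-c^{\ell-1}}^2_{0,2}$. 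The residual quadratic variation is controlled by the truncation, $(F_m^1(c^{\ell-1}))^2\abs{\nabla c^{\ell-1}}^2_{0,2}\le m^2$ from \re{s1.4}, so after taking expectations and using $\be\abs{\Delta^\ell\beta}^2=3h$ the right-hand side becomes the clean bound $18\gamma^2m^2T$; no maximal inequality is needed since that right-hand side is monotone in $\ell$. Squaring this identity and using \re{3.33} together with the fourth moments of the increments $\Delta^\ell\beta$ gives \re{s2.21}$_3$.

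The genuinely hard step is the $\h^1$-bound \re{s2.21}$_4$, obtained by testing \re{s2.3}$_2$ with $2A_1c^\ell$. The diffusion yields $2\eps h\abs{A_1c^\ell}^2_{0,2}$; the transport $(B_1(\bu^\ell,c^\ell),A_1c^\ell)$ and the reaction $(B_2(\theta_m^{\eps_m}(n^\ell),c^\ell),A_1c^\ell)$ are absorbed into this dissipation by Young's inequality, at the price of terms like $\bk(m)\abs{\bu^\ell}^2_{0,2}\abs{\nabla c^\ell}^2_{0,2}$ that are summable thanks to \re{s2.21}$_1$ and \re{s2.21}$_2$. The noise $\gamma F_m^1(c^{\ell-1})g(c^{\ell-1})\Delta^\ell\beta$ no longer has a vanishing martingale part when tested against $A_1c^{\ell-1}$, so it must be split into a martingale, whose maximum is controlled by the Burkholder--Davis--Gundy (or Doob) inequality and then reabsorbed by a fraction of $\be\max_\ell\abs{\nabla c^\ell}^2_{0,2}$, and a quadratic-variation part. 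The latter carries second-order quantities of size $\gamma^2\abs{\Delta^\ell\beta}^2\abs{A_1c^{\ell-1}}^2_{0,2}$, and it is precisely here that the hypothesis $\gamma^2<\tfrac{1}{484}\eps$, together with the Stratonovich correction $\tfrac{\gamma^2}{2}$ inside $\eps$, is used to absorb them into $2\eps h\,\be\sum\abs{A_1c^\ell}^2_{0,2}$; the constant $484$ is the bookkeeping of the three noise directions and of $\abs{\mathbf{g}_k}_{1,\infty}\le 1$. This is the step I expect to require the most care. Finally \re{s2.21}$_5$ and \re{s2.21}$_6$ follow by testing \re{s2.3}$_3$ with $2n^\ell$ and $2A_1n^\ell$: the equation carries no stochastic integral, $(B_1(\bu^\ell,n^\ell),n^\ell)=0$, and the chemotactic term $(B_3(\theta_m(n^\ell),c^\ell),n^\ell)=\int_{\bo}\theta_m(n^\ell)\nabla c^\ell\cdot\nabla n^\ell$ is bounded by $17(m+1)\abs{\nabla c^\ell}_{0,2}\abs{\nabla n^\ell}_{0,2}$ and split by Young so that the $c$-contribution is summable through \re{s2.21}$_4$; squaring and \re{3.33} then give the fourth-moment bound.
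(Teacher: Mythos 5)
Your plan for the concentration and density estimates \re{s2.21}$_2$--\re{s2.21}$_6$ is essentially the paper's proof: testing \re{s2.3}$_2$ with $2c^i$ and $2A_1c^i$ and \re{s2.3}$_3$ with $2n^i$, the identity \re{s2.20}, the sign of the $B_2$ term, the pathwise cancellation $(g(c^{i-1})\Delta^i\beta,c^{i-1})=0$, the cutoff bound $F^1_m(c^{i-1})\abs{\nabla c^{i-1}}_{0,2}\le m$ from \re{s1.4}, the tower property and Burkholder--Davis--Gundy inequality combined with $\gamma^2<\eps/484$ for the $A_1$-estimate, and squaring plus \re{3.33} for the fourth moments. (Two harmless slips: \re{s2.21}$_6$ is obtained by squaring the $2n^\ell$ estimate, so no testing with $2A_1n^\ell$ is needed; and the $c$-contribution in the $n$-estimates is closed by \re{s2.21}$_2$--\re{s2.21}$_3$ rather than \re{s2.21}$_4$.)

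The genuine gap is in your proof of \re{s2.21}$_1$. You cancel the noise against the \emph{old} iterate, $(\mathbf{f}(\bu^{\ell-1})\Delta^\ell W,\bu^{\ell-1})=0$, and then try to absorb the remaining cross term $\alpha(\mathbf{f}(\bu^{\ell-1})\Delta^\ell W,\bu^\ell-\bu^{\ell-1})$ by Young's inequality into $\abs{\bu^\ell-\bu^{\ell-1}}^2_{0,2}$ plus a quadratic-variation remainder of size $\bk\alpha^2\valabs{\Delta^\ell W}^2\abs{\nabla\bu^{\ell-1}}^2_{0,2}$, claiming the latter is swallowed by the extra viscosity $\alpha^2/2$ inside $\eta$. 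This cannot produce the bound ``for every $\omega$'' asserted in \re{s2.21}$_1$: the dissipation available per step is $2\eta h\abs{\nabla\bu^\ell}^2_{0,2}$ with a \emph{deterministic} factor $h$, whereas $\valabs{\Delta^\ell W(\omega)}^2$ is unbounded in $\omega$; the identification $\valabs{\Delta^\ell W}^2\sim 3h$ holds only under the expectation, via independence of $\Delta^\ell W$ from $\mathcal{F}_{t_{\ell-1}}$. So your route yields at best an expectation estimate (and then only with degraded constants), not the pathwise one. The paper's mechanism is different and is what makes the pathwise statement true: by the identity \re{s1.22}, the noise term vanishes \emph{identically when tested against the new iterate} $2\bu^i$ --- integration by parts throws the derivative onto $\bu^i$ and produces the factor $\sum_j\partial_j u^i_j=\nabla\cdot\bu^i=0$, using $\bu^i|_{\partial\bo}=0$ --- so no quadratic-variation term ever appears and the Stratonovich correction is never invoked. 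This pathwise character is not cosmetic: it is exactly what allows the paper, in the proof of \re{s2.21}$_4$, to pull $\abs{\bu^i}^2_{0,2}$ out of the expectation of the product $\abs{\bu^i}^2_{0,2}\abs{\nabla c^i}^2_{0,2}$ (and again in Lemma \ref{Lemmas2.6}); with only an expectation bound for $\bu$, those later steps would require higher joint moments that are not available at that stage.
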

	
	\begin{prev}[\textbf{Proof of Lemma \ref{Lemmas2.5}}]
		Let us fix arbitrary $N\in \mathbb{N}$. In order to prove \re{s2.21}$_1$, we fix $\ell \in \{1,2,...,N\}$. By taking the $\bh_m$-inner product with \re{s2.3}$_1$ and $2\bu^i$ for any $i\in \{1,...,\ell\}$ and using \re{s2.20}, we derive that 
		\begin{equation*}
\begin{split}
&\abs{\bu^i}^2_{0,2}-\abs{\bu^{i-1}}^2_{0,2}+\abs{\bu^i-\bu^{i-1}}^2_{0,2}+2h\eta  \abs{\nabla\bu^i}^2_{0,2}\\
&=\left(B_0^m(\theta_m(n^i),\phi)h+\alpha \pi_m (\Pl(\mathbf{f}(\bu^{i-1}))) \Delta^i W,2\bu^i\right).
\end{split}
		\end{equation*}
		By using \re{3.3}, the H\"older inequality, the Poincar\'e inequality and the Young inequality, we get 
		\begin{equation*}
			\begin{split}
				h(B_0^m(\theta_m(n^i),\phi),2\bu^i)&\leq 34h \valabs{\bo}^\frac{1}{2}(m+1)\abs{\nabla \phi}_{0,\infty}\abs{\bu^i}_{0,2}\\
				&\leq 34h \valabs{\bo}^\frac{1}{2}(m+1)\bk\abs{\nabla \phi}_{0,\infty}\abs{\nabla\bu^i}_{0,2}\\
				&\leq h\eta\abs{\nabla\bu^i}_{0,2}^2+ \frac{h}{\eta}\bk\valabs{\bo}(m+1)^2\abs{\nabla \phi}^2_{0,\infty}.
			\end{split}
		\end{equation*}
		By noting that $\bu^i=(u^i_1,u^i_2,u^i_3)$,  an integration-by-part and the fact that $\sum_{j=1}^3\partial_j\bu_j^i=0$ and $\bu^i|_{\partial\bo}=0$  yield
		\begin{equation}\label{s1.22}
			\begin{split}
				\alpha\left(\pi_m (\Pl(\mathbf{f}(\bu^{i-1}))) \Delta^i W,2\bu^i\right)&:=	\alpha\left(\sum_{k=1}^3  \pi_m (\nabla\bu^{i-1} e_k \Delta^i W_k),2\bu^i\right)\\
				&=2\alpha\sum_{j,k=1}^3 \Delta^i W_k\int_\bo\partial_j(u_k^{i-1}(x))u_j^i(x)dx=0\\
				&=-2\alpha\sum_{j,k=1}^3 \Delta^i W_k\int_\bo u_k^{i-1}(x)\partial_j(u_j^i(x))dx=0.
				%			&=-2\alpha\sum_{k=1}^3\Delta^i W_k\int_\bo u_k^{i-1}(x)\nabla\cdot \bu^i(x)dx=0.
			\end{split}
		\end{equation}
		From these calculations and the fact that $h\leq  T$, we derive that 
		\begin{equation*}
			\abs{\bu^i}^2_{0,2}-\abs{\bu^{i-1}}^2_{0,2}+\abs{\bu^i-\bu^{i-1}}^2_{0,2}+h\eta  \abs{\nabla\bu^i}^2_{0,2}\leq \frac{T}{\eta}\bk\valabs{\bo}(m+1)^2\abs{\nabla \phi}^2_{0,\infty}.
		\end{equation*}
		By summing this inequality from $i = 1$ to $i = \ell$, we arrive at\\
		\begin{equation*}
			\abs{\bu^\ell}^2_{0,2}+\sum_{i=1}^\ell\abs{\bu^i-\bu^{i-1}}^2_{0,2}+h\eta \sum_{i=1}^\ell \abs{\nabla\bu^i}^2_{0,2}\leq \abs{\bu_{0}^m}^2_{0,2}+\frac{T}{\eta}\bk\valabs{\bo}(m+1)^2\abs{\nabla \phi}^2_{0,\infty},
		\end{equation*}
		from which we obtain  \re{s2.21}$_1$. For the proof of \re{s2.21}$_2$, we fix $\ell \in \{1,2,...,N\}$ and recall that since $c^\ell\in D(A_1)$, it satisfies the Neumann boundary condition. We then take the $\elm^2$-inner product with \re{s2.3}$_2$ and $2c^i$ for any $i\in \{0,1,...,\ell\}$ and use \re{s2.20} and  an integration-by-part to obtain
		\begin{equation*}
			\begin{split}
				&\abs{c^i}^2_{0,2}-\abs{c^{i-1}}^2_{0,2}+\abs{c^i-c^{i-1}}^2_{0,2}+2h\eps \abs{\nabla c^i}^2_{0,2}+2h\abs{c^i\sqrt{\theta_m(n^i)+\eps_m}}^2_{0,2}\\
				%	&=2\gamma\left(\sum_{k=1}^3 \mathbf{g}_k\cdot \nabla c^{i-1} \Delta^i \beta_k,c^i\right)\\
				&=2\gamma F^1_m(c^{i-1})\left(g(c^{i-1}) \Delta^i \beta,c^i-c^{i-1}\right).
			\end{split}
		\end{equation*}
		In the last line, we have used  the following equation, which is true because $\nabla\cdot\mathbf{g}_k=0$  and $\mathbf{g}_k|_{\partial\bo}=0$,  $k=1,2,3,$,
		\begin{equation*}
			\left(g( c^{i-1}) \Delta^i \beta,c^{i-1}\right)=\frac{1}{2}\sum_{k=1}^3 \Delta^i \beta_k\left(\mathbf{g}_k\cdot \nabla( c^{i-1})^2,1\right)=\frac{1}{2}\sum_{k=1}^3 \Delta^i \beta_k\left(( c^{i-1})^2, \nabla\cdot\mathbf{g}_k\right)=0.
		\end{equation*}
		By using  $F^1_m(c^{i-1}):=F_m(\abs{c^{i-1}}_{1,2}+1)\leq \frac{m}{\abs{c^{i-1}}_{1,2}+1}$, see \re{s1.4},  $\abs{\mathbf{g}_k}_{\sobm^{1,\infty}}\leq 1$ and  the Young inequality, we obtain
		\begin{equation*}
			\begin{split}
				2\gamma F^1_m(c^{i-1})\left(g( c^{i-1}) \Delta^i \beta,c^i-c^{i-1}\right)&\leq \frac{2\gamma m}{\abs{c^{i-1}}_{1,2}+1}\sum_{k=1}^3 \abs{\mathbf{g}_k}_{\sobm^{1,\infty}} \abs{\nabla c^{i-1}}_{0,2} \valabs{\Delta^i \beta_k}\abs{c^i-c^{i-1}}_{0,2}\\
				&\leq \frac{2\gamma m}{\abs{c^{i-1}}_{1,2}+1}  \abs{\nabla c^{i-1}}_{0,2}\abs{c^i-c^{i-1}}_{0,2} \sum_{k=1}^3\valabs{\Delta^i \beta_k}\\
				&\leq \frac{1}{2}\abs{c^i-c^{i-1}}_{0,2}^2+6\gamma^2m^2\sum_{k=1}^3  \valabs{\Delta^i \beta_k}^2.
			\end{split}
		\end{equation*}
		Here, we also used the fact that $\frac{ \abs{\nabla c^{i-1}}_{0,2} }{\abs{c^{i-1}}_{1,2}+1}\leq 1$, and  \re{3.33}. By recalling that $  \valabs{\Delta^i \beta}^2=\sum_{k=1}^3\valabs{\Delta^i \beta_k}^2$,  we then derive  that 
		\begin{equation*}
			\abs{c^i}^2_{0,2}-\abs{c^{i-1}}^2_{0,2}+\frac{1}{2}\abs{c^i-c^{i-1}}^2_{0,2}+2h\eps \abs{\nabla c^i}^2_{0,2}+2h\abs{c^i\sqrt{\theta_m(n^i)+\eps_m}}^2_{0,2}\leq 6\gamma^2  m^2 \valabs{\Delta^i \beta }^2.
		\end{equation*}
		By summing this inequality from $i =1$ to $i = \ell$ and  taking the max over $\ell=1$ to $\ell=N$, we arrive at 
		\begin{equation}\label{s2.22}
			\begin{split}
				&  \max_{1\leq \ell\leq N}\abs{c^\ell}^2_{0,2}+\frac{1}{2}  \sum_{i=1}^N\abs{c^i-c^{i-1}}^2_{0,2}+2h\eps   \sum_{i=1}^N\abs{\nabla c^i}^2_{0,2}+h \ \sum_{i=1}^N\abs{c^i\sqrt{\theta_m(n^i)+\eps_m}}^2_{0,2}\\
				&\leq \abs{c^m_0}^2_{0,2} +6\gamma^2   m^2\sum_{i=1}^N  \valabs{\Delta^i \beta }^2.
			\end{split}
		\end{equation}
		By using  the inequality in \cite[Corollary 1.1]{Ichikawa},  we derive that 
		\begin{equation*}
			\be \sum_{i=1}^N  \valabs{\Delta^i \beta }^2=\sum_{i=1}^N  \be \valabs{ \beta(t_i) -\beta(t_{i-1})}^2=3Nh=3T.
		\end{equation*}
		By taking the mathematical expectation on \re{s2.22} and using this last equality we obtain   \re{s2.21}$_2$. To prove \re{s2.21}$_3$, we use  \re{3.33} and  the inequality in \cite[Corollary 1.1]{Ichikawa} and see that
		\begin{equation*}
			\be\left( \sum_{i=1}^N  \valabs{\Delta^i \beta }^2 \right)^2\leq N\be \sum_{i=1}^N  \valabs{\Delta^i \beta }^4\leq  27N^2h^2=27T^2.
		\end{equation*}
		Squaring both sides the  inequality \re{s2.22}, then taking the mathematical expectation, using \re{3.33}  and the inequality above yield \re{s2.21}$_3$. 
		
		Next, by taking the $\elm^2$-inner product of    \re{s2.3}$_2$ with  $2A_1c^i$ and integrating by parts yield 
		\begin{equation*}
			\begin{split}
				&\abs{\nabla c^i}^2_{0,2}-\abs{\nabla c^{i-1}}^2_{0,2}+\abs{\nabla c^i-\nabla c^{i-1}}^2_{0,2}+2h\eps \abs{A_1 c^i}^2_{0,2}\\
				&=-2h\left(B_1(\bu^\ell,  c^\ell)+ B_2(\theta^2_m(n^\ell),c^\ell),A_1c^i\right)+2\gamma F^1_m(c^{i-1})\left(g( c^{i-1}) \Delta^i \beta,A_1c^i-A_1c^{i-1}\right)\\
				&\qquad+2\gamma F^1_m(c^{i-1})\left(g( c^{i-1}) \Delta^i \beta,A_1c^{i-1}\right)\\
				&=I_1+I_2+I_3.
			\end{split}
		\end{equation*}
		By using the equivalence of norms on $\bh_m$ and \re{3.3}, the Young inequality, we obtain
		\begin{equation*}
			\begin{split}
				I_1&\leq \frac{h\eps}{2}\abs{A_1c^i}^2_{0,2}+\frac{4h}{\eps}\abs{B_1(\bu^\ell,  c^\ell)}^2_{0,2}+\frac{4h}{\eps}\abs{B_2(\theta^2_m(n^\ell),c^\ell)}^2_{0,2}\\
				&\leq \frac{h\eps}{2}\abs{A_1c^i}^2_{0,2}+\frac{4h}{\eps}\abs{\bu^i}_{0,\infty}^2\abs{\nabla c^i}^2_{0,2}+\frac{8h}{\eps}\left[578(m+1)^2+\frac{81}{m^2}\right]\abs{c^i}^2_{0,2}\\
				&\leq \frac{h\eps}{2}\abs{A_1c^i}^2_{0,2}+\frac{4h\bk(m)}{\eps}\abs{\bu^i}_{0,2}^2\abs{\nabla c^i}^2_{0,2}+\frac{8h}{\eps}\left[578(m+1)^2+\frac{81}{m^2}\right]\abs{c^i}^2_{0,2}.
			\end{split}
		\end{equation*}
		Since $\bo$ is a convex, we have
		\begin{equation}\label{3,37}
			\abs{\nabla^2 c^{i-1}}^2_{0,2}\leq \abs{\Delta c^{i-1}}^2_{0,2}.
		\end{equation}
		Since  $\abs{\mathbf{g}_k}_{\sobm^{1,\infty}}\leq 1$ and $\frac{ \abs{\nabla c^{i-1}}_{0,2}}{\abs{c^{i-1}}_{1,2}+1} <1$,  an integration-by-part, the Young inequality and  \re{s1.4} yield
		\begin{equation*}
			\begin{split}
				I_2&\leq \frac{1}{2}\abs{\nabla c^i-\nabla c^{i-1}}^2_{0,2}+6\gamma^2 (F^1_m(c^{i-1}))^2\sum_{k=1}^3 \abs{\nabla\left(\mathbf{g}_k\cdot \nabla c^{i-1}\right)}^2_{0,2} \valabs{\Delta^i \beta_k}^2\\
				&\leq \frac{1}{2}\abs{\nabla c^i-\nabla c^{i-1}}^2_{0,2}+\frac{12\gamma^2 m^2}{(\abs{c^{i-1}}_{1,2}+1)^2}\sum_{k=1}^3\abs{\nabla\mathbf{g}_k}^2_{0,\infty} \abs{ \nabla c^{i-1}}^2_{0,2} \valabs{\Delta^i \beta_k}^2\\
				&\qquad+12\gamma^2\sum_{k=1}^3\abs{\mathbf{g}_k}^2_{0,\infty} \abs{ \nabla^2 c^{i-1}}^2_{0,2} \valabs{\Delta^i \beta_k}^2\\
				&\leq \frac{1}{2}\abs{\nabla c^i-\nabla c^{i-1}}^2_{0,2}+12\gamma^2  m^2\valabs{\Delta^i \beta}^2 +12\gamma^2 \abs{ A_1 c^{i-1}}^2_{0,2} \valabs{\Delta^i \beta}^2.
			\end{split}
		\end{equation*}
		Hence, we have
		\begin{equation*}
			\begin{split}
				&\abs{\nabla c^i}^2_{0,2}-\abs{\nabla c^{i-1}}^2_{0,2}+\frac{1}{2}\abs{\nabla c^i-\nabla c^{i-1}}^2_{0,2}+\frac{3h\eps}{2} \abs{A_1 c^i}^2_{0,2}\\
				&\leq\frac{4h\bk(m)}{\eps}\abs{\bu^i}_{0,2}^2\abs{\nabla c^i}^2_{0,2}+\frac{8h}{\eps}\left[578(m+1)^2+\frac{81}{m^2}\right]\abs{c^i}^2_{0,2}+12m^2\gamma^2  \valabs{\Delta^i \beta}^2 \\
				&\qquad+12\gamma^2  \abs{ A_1 c^{i-1}}^2_{0,2} \valabs{\Delta^i \beta}^2+2\gamma F^1_m(c^{i-1})\left(\sum_{k=1}^3\Delta^i \beta_k\nabla \left(\mathbf{g}_k\cdot \nabla c^{i-1} \right),\nabla c^{i-1}\right).
			\end{split}
		\end{equation*}
		By summing up this inequality from $1$ to $\ell$ and taking the maximum over $\ell$,  and by using \re{s2.21}$_1$, we get
		\begin{equation}\label{s2.23}
			\begin{split}
				&\max_{1\leq \ell\leq N}\abs{\nabla c^\ell}^2_{0,2}+\frac{1}{2}\sum_{i=1}^N\abs{\nabla c^i-\nabla c^{i-1}}^2_{0,2}+\frac{3h\eps}{2} \sum_{i=1}^N\abs{A_1 c^i}^2_{0,2}\\
				&\leq \abs{\nabla c^{m}_0}^2_{0,2}+\bk(m) h\sum_{i=1}^N\abs{\nabla c^i}^2_{0,2}+\bk h\sum_{i=1}^N\abs{c^i}^2_{0,2}+12\gamma^2m^2  \sum_{i=1}^N\valabs{\Delta^i \beta}^2 \\
				&\qquad+12\gamma^2  \sum_{i=1}^N\abs{ A_1 c^{i-1}}^2_{0,2} \valabs{\Delta^i \beta}^2\\
				&\qquad+2\gamma\max_{1\leq \ell\leq N} \sum_{i=1}^\ell \left(F^1_m(c^{i-1})\sum_{k=1}^3\Delta^i \beta_k\nabla (\mathbf{g}_k\cdot \nabla c^{i-1} ),\nabla c^{i-1}\right).
			\end{split}
		\end{equation}
		Since the last term in \re{s2.23} is the stochastic integral,	we use the Burkholder–Davis–Gundy inequality and obtain
		\begin{equation*}
			\begin{split}
				&2\gamma\be \max_{1\leq \ell\leq N} \sum_{i=1}^\ell\left(F^1_m(c^{i-1})\sum_{k=1}^3\Delta^i \beta_k\nabla \left(\mathbf{g}_k\cdot \nabla c^{i-1} \right),\nabla c^{i-1}\right)\\
				&\leq  2\gamma\bk\be \left(h\sum_{i=1}^N(F^1_m(c^{i-1}))^2\sum_{k=1}^3\abs{\nabla \left(\mathbf{g}_k\cdot \nabla c^{i-1} \right)}^2_{0,2}\abs{\nabla c^{i-1}}^2_{0,2}\right)^\frac{1}{2}\\
				&\leq  2\gamma\bk \be \left(h\sum_{i=1}^N\sum_{k=1}^3\abs{\nabla \left(\mathbf{g}_k\cdot \nabla c^{i-1} \right)}^2_{0,2}\right)^\frac{1}{2},
			\end{split}
		\end{equation*}
		where $(F^1_m(c^{i-1}))^2\abs{\nabla c^{i-1}}^2_{0,2}<1$ was used in the last line.  By using  $\abs{\mathbf{g}_k}_{\sobm^{1,\infty}}\leq 1$, \re{3.37}, the H\"older  and  the Young inequalities we see that
		\begin{equation*}
			\begin{split}
				&2\gamma\be \max_{1\leq \ell\leq N} \sum_{i=1}^\ell\left(F^1_m(c^{i-1})\sum_{k=1}^3\Delta^i \beta_k\nabla \left(\mathbf{g}_k\cdot \nabla c^{i-1} \right),\nabla c^{i-1}\right)\\
				&\leq  2\gamma\bk \be \left(3h\sum_{i=1}^N\abs{\nabla c^{i-1}}^2_{0,2}\right)^\frac{1}{2}+2\gamma\bk \be \left(3h\sum_{i=1}^N\abs{\nabla^2 c^{i-1}}^2_{0,2}\right)^\frac{1}{2}\\
				&\leq  2\gamma\bk  \left(3h\be\sum_{i=1}^N\abs{\nabla c^{i-1}}^2_{0,2}\right)^\frac{1}{2}+2\gamma\bk  \left(3h\be\sum_{i=1}^N\abs{\nabla^2 c^{i-1}}^2_{0,2}\right)^\frac{1}{2}\\
				&\leq  12\gamma^2  h\be\sum_{i=1}^N\abs{\nabla c^{i-1}}^2_{0,2}+12\gamma^2  h\be\sum_{i=1}^N\abs{A_1 c^{i-1}}^2_{0,2}+2\bk^2\\
				&\leq  12\gamma^2  h\be\sum_{i=1}^N\abs{\nabla c^{i}}^2_{0,2}+12\gamma^2  h\be\sum_{i=1}^N\abs{A_1 c^{i}}^2_{0,2}+2\bk^2+12\gamma^2  T\abs{c^m_0}^2_{0,2}+12\gamma^2  T\abs{A_1c^m_0}^2_{0,2}.
			\end{split}
		\end{equation*}
		From the $\mathcal{F}_{t_{\ell-1}}$-measurability of $c^{\ell-1}$, the tower property of the conditional mathematical expectation, the independence of the increments of the Wiener process $\beta$ and the inequality in \cite[Corollary 1.1]{Ichikawa},  we derive that 
		\begin{equation*}
			\begin{split}
				12\gamma^2 \be  \sum_{i=1}^N\abs{ A_1 c^{i-1}}^2_{0,2} \valabs{\Delta^i \beta}^2&=12\gamma^2  \sum_{i=1}^N \be\left[\be\left( \abs{A_1c^{i-1}}^2_{0,2} \valabs{\Delta^i \beta }^2\right)| \mathcal{F}_{t_{\ell-1}}\right]\\
				&=12\gamma^2  \sum_{i=1}^N \be  \abs{A_1 c^{i-1}}^2_{0,2} \be  \valabs{ \beta(t_i)-\beta(t_{i-1}) }^2 =12h\gamma^2  \sum_{i=1}^N\be  \abs{A_1c^{i-1}}^2_{0,2}\\
				&\leq 12h\gamma^2 \be \sum_{i=1}^N  \abs{A_1c^{i}}^2_{0,2}+12\gamma^2  T\abs{A_1c^m_0}^2_{0,2}.
			\end{split}
		\end{equation*}
		In the last line, we used the fact that $h\leq T$. Now,  taking  the expectation  in \re{s2.23} and using the above inequalities imply
		\begin{equation*}
			\begin{split}
				&\be \max_{1\leq \ell\leq N}\abs{\nabla c^\ell}^2_{0,2}+\frac{1}{2}\be \sum_{i=1}^N\abs{\nabla c^i-\nabla c^{i-1}}^2_{0,2}+\left(\frac{3\eps}{2}-24\gamma^2\right)h \be\sum_{i=1}^N\abs{A_1 c^i}^2_{0,2}\\
				&\leq \abs{\nabla c^{m}_0}^2_{0,2}+12\gamma^2  T\abs{c^m_0}^2_{0,2}+24\gamma^2  T\abs{A_1c^m_0}^2_{0,2}+\bk(m) h\be \sum_{i=1}^N\abs{\nabla c^i}^2_{0,2} \\
				&\qquad+\bk h\be \sum_{i=1}^N\abs{c^i}^2_{0,2}+12\gamma^2m^2  \sum_{i=1}^N\be\valabs{\Delta^i \beta}^2\\
				&\leq \abs{\nabla c^{m}_0}^2_{0,2}+12\gamma^2  T\abs{c^m_0}^2_{0,2}+24\gamma^2  T\abs{A_1c^m_0}^2_{0,2}+\bk(m) h\be \sum_{i=1}^N\abs{\nabla c^i}^2_{0,2} \\
				&\qquad+\bk Nh\be \max_{1\leq \ell\leq N}\abs{c^i}^2_{0,2}+12\gamma^2m^2  Nh.
			\end{split}
		\end{equation*}
		Since $Nh=T$ and $\gamma^2\leq \frac{3\eps}{1452}$, we use this inequality and \re{s2.21}$_{2}$ and obtain \re{s2.21}$_4$. 
		
		Next, we prove  \re{s2.21}$_5$.  We fix $\ell \in \{1,2,...,N\}$ and   take the $\elm^2$-inner product with \re{s2.3}$_3$ and $2n^i$ for any $i\in \{1,...,\ell\}$ and use \re{s2.20}, \re{3.3} and  an integration-by-part to obtain
		\begin{equation*}
			\begin{split}
				\abs{n^i}^2_{0,2}-\abs{n^{i-1}}^2_{0,2}+\abs{n^i-n^{i-1}}^2_{0,2}+2h\delta \abs{\nabla n^i}^2_{0,2}&=2h\int_\bo\theta_m(n^i(x))\nabla c^i(x)\cdot\nabla n^i(x)dx\\
				&\leq 34h(m+1)\abs{\nabla c^i}_{0,2}\abs{\nabla n^i}_{0,2}\\
				&\leq h\delta \abs{\nabla n^i}_{0,2}^2+\frac{289(m+1)^2}{\delta}h\abs{\nabla c^i}^2_{0,2}.
			\end{split}
		\end{equation*}
		By summing this inequality from $i=1$ to $i=\ell$ and taking the maximum over $\ell$ we get
		\begin{equation*}
			\max_{1\leq \ell\leq N}\abs{n^\ell}^2_{0,2}+\sum_{i=1}^N\abs{n^i-n^{i-1}}^2_{0,2}+h\delta \sum_{i=1}^N\abs{\nabla n^i}^2_{0,2}\leq \abs{n^m_0}^2_{0,2} +\frac{289(m+1)^2}{\delta}h\sum_{i=1}^N\abs{\nabla c^i}^2_{0,2}.
		\end{equation*}
		We take the mathematical expectation on  this inequality and use  \re{s2.21}$_{2}$ to obtain \re{s2.21}$_5$. For the proof of   \re{s2.21}$_6,$  we square the above  inequality, take the mathematical expectation and use the fact that $(a+b)^2\leq 2a^2+2b^2$ and \re{s2.21}$_4$ to get \re{s2.21}$_6$  and and the proof of Lemma \ref{Lemmas2.5}.
	\end{prev}
	\begin{lemma}\label{Lemmas2.6}
		For any $m\geq 1$, there exists a constant $\bk_m> 0$ such that for any  $j\in \{1,...,N\}$,
		\begin{equation*}
			\begin{split}
				&\be h\sum_{\ell=0}^{N-j}\abs{\bu^{\ell+j}-\bu^\ell}^4_{0,2}\leq \bk_m t^2_j,\
				\be h\sum_{\ell=0}^{N-j}\abs{c^{\ell+j}-c^\ell}^4_{(\h^1)'}\leq \bk_m t^2_j,\text{ and }\\
				&\hspace{3cm}\be h\sum_{\ell=0}^{N-j}\abs{n^{\ell+j}-n^\ell}^4_{(\h^1)'}\leq \bk_m t^2_j.
			\end{split}
		\end{equation*}
	\end{lemma}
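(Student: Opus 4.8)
The plan is to exploit the telescoping structure of the scheme \re{s2.3}. For each variable the increment over $j$ steps splits into a drift part (a sum of $j$ terms, each carrying a factor $h$) and a stochastic part (a sum of martingale increments, which is \emph{absent} in the equation for $n^\ell$); writing, for instance,
\begin{equation*}
\bu^{\ell+j}-\bu^\ell=-h\sum_{i=\ell+1}^{\ell+j}\left[\eta A\bu^i+B^m(\bu^{i-1},\bu^i)-B_0^m(\theta_m(n^i),\phi)\right]+\alpha\sum_{i=\ell+1}^{\ell+j}\pi_m(\Pl(\mathbf{f}(\bu^{i-1})))\Delta^i W,
\end{equation*}
and analogously for $c^\ell$ and $n^\ell$ (with $A_1$, $B_1$, $B_2$, $B_3$ and the respective noise terms), I would estimate the two contributions separately after $\abs{a+b}^4\leq 8\abs{a}^4+8\abs{b}^4$. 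Two elementary facts drive the argument. First, the discrete Cauchy--Schwarz bound $\big(h\sum_{i=\ell+1}^{\ell+j}a_i\big)^2\leq t_j\,h\sum_{i=\ell+1}^{\ell+j}a_i^2$, with $t_j=jh$, squared; this is what produces the exponent $t_j^2$ (and not a higher power). Second, the discrete Burkholder--Davis--Gundy inequality, which controls the fourth moment of the stochastic sums by the square of their time-integrated quadratic variation.

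For the velocity everything is pathwise bounded. On the finite-dimensional space $\bh_m$ all norms are equivalent, so \re{s2.21}$_1$ bounds $\abs{A\bu^i}_{0,2}$ and $\abs{B^m(\bu^{i-1},\bu^i)}_{0,2}$ by $\bk_m$ for every $\omega$, while $\abs{B_0^m(\theta_m(n^i),\phi)}_{0,2}\leq\bk_m$ follows from $\valabs{\theta_m(n^i)}\leq 17(m+1)$ in \re{3.3} together with $\phi\in\sobm^{1,\infty}$. Hence the drift is bounded by $\bk_m t_j$ pathwise, its fourth power by $\bk_m t_j^4\leq\bk_m t_j^2$ since $t_j\leq T$. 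For the stochastic sum, the Hilbert--Schmidt norm of $\pi_m(\Pl(\mathbf{f}(\bu^{i-1})))$ is controlled by $\abs{\nabla\bu^{i-1}}_{0,2}\leq\bk_m$ pathwise, so the discrete BDG inequality gives a fourth moment $\leq\bk_m t_j^2$. Summing over $\ell$ and using $h(N-j+1)\leq T$ yields the first estimate.

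For $c^\ell$ and $n^\ell$ the differences are measured in $(\h^1)'$, and the point is to land each nonlinear term in this dual norm. Integrating by parts and using \re{3.3} one checks $\abs{A_1\varphi}_{(\h^1)'}\leq\abs{\nabla\varphi}_{0,2}$, $\abs{B_1(\bu^i,\varphi)}_{(\h^1)'}\leq\bk_m\abs{\varphi}_{0,2}$ (via the pathwise $\elm^\infty$-bound on $\bu^i$ coming from finite-dimensionality), $\abs{B_2(\theta^{\eps_m}_m(n^i),c^i)}_{(\h^1)'}\leq\bk_m\abs{c^i}_{0,2}$ and $\abs{B_3(\theta_m(n^i),c^i)}_{(\h^1)'}\leq\bk_m\abs{\nabla c^i}_{0,2}$. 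Thus the $c$-drift is controlled by $\bk_m h\sum\abs{c^i}_{1,2}$ and the $n$-drift by $\bk_m h\sum(\abs{n^i}_{1,2}+\abs{\nabla c^i}_{0,2})$; applying the squared Cauchy--Schwarz inequality and then summing in $\ell$ leaves $\bk_m t_j^2\,\be\big(h\sum_{i=1}^N\abs{c^i}_{1,2}^2\big)^2$ and its $n$-analogue. These final expectations are exactly the quadratic-in-energy bounds furnished by \re{s2.21}$_3$, \re{s2.21}$_4$ and \re{s2.21}$_6$, hence $\leq\bk_m$. The stochastic part of the $c$-equation is again handled by BDG: thanks to the cut-off $F^1_m$ and \re{s1.4}, the coefficient $F^1_m(c^{i-1})g(c^{i-1})$ has $\elm^2$-norm $\leq\bk_m$ pathwise, giving a fourth moment $\leq\bk_m t_j^2$ in $\elm^2$ and a fortiori in $(\h^1)'$ through $\elm^2\hookrightarrow(\h^1)'$.

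The main obstacle is securing the sharp exponent $t_j^2$ for the $c$- and $n$-drifts: a naive fourth-power estimate of a $j$-term sum would cost an extra factor and force fourth-power maximal bounds on $\abs{\nabla c^\ell}$ and $\abs{\nabla n^\ell}$, which are not available from Lemma \ref{Lemmas2.5}. The remedy is precisely the squared Cauchy--Schwarz grouping, which replaces that requirement by $\be\big(h\sum_i\abs{\cdot}_{1,2}^2\big)^2\leq\bk_m$, i.e.\ the quadratic energy estimates already established. The remaining care is bookkeeping: pushing the bilinear and reaction terms into the $(\h^1)'$ norm with constants that depend on $m$ but are uniform in both $N$ and $\omega$, so that the final summation in $\ell$ closes cleanly.
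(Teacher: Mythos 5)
Your proof is correct, and for the chemical and cell-density components it is essentially the paper's own argument: the drift terms are pushed into the $(\h^1)'$-norm with constants depending only on $m$, the squared Cauchy--Schwarz grouping $\big(h\sum_{i=\ell+1}^{\ell+j}a_i\big)^4\le t_j^2\big(h\sum_{i=1}^N a_i^2\big)^2$ reduces everything to the quadratic energy bounds \re{s2.21}$_3$ and \re{s2.21}$_6$, and the noise in the $c$-equation is handled by the Burkholder--Davis--Gundy inequality together with the cut-off bound $F^1_m(c^{i-1})\abs{\nabla c^{i-1}}_{0,2}\le m$ from \re{s1.4}, exactly as in the paper. Where you genuinely diverge is the velocity component. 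You split off the stochastic sum $\alpha\sum_{i}\pi_m(\Pl(\mathbf{f}(\bu^{i-1})))\Delta^iW$ and estimate its fourth moment by a discrete BDG inequality, using the pathwise Hilbert--Schmidt bound coming from $\abs{\nabla\bu^{i-1}}_{0,2}\le\bk_m$ on $\bh_m$. The paper instead takes the $\el^2$-inner product of the summed equation with $\bu^{\ell+j}-\bu^\ell$ and observes that the transport-noise term vanishes identically, $\left(\sum_{k=1}^3\nabla\bu^{i-1}e_k\Delta^iW_k,\bu^{\ell+j}-\bu^\ell\right)=0$, because $\bu^{\ell+j}-\bu^\ell$ is divergence free and vanishes on $\partial\bo$ (the same cancellation as in \re{s1.22}); the whole velocity estimate is then deterministic in $\omega$, with expectation taken only at the very end. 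The paper's route buys a strictly stronger, pathwise bound $h\sum_{\ell}\abs{\bu^{\ell+j}-\bu^\ell}^4_{0,2}\le\bk_m t_j^2$ for every $\omega$, with no stochastic tools; your route is more robust, since it does not exploit the specific gradient/transport structure of $\mathbf{f}$ and would survive any noise coefficient that is Hilbert--Schmidt bounded on bounded sets of $\bh_m$. Both close the argument, since the lemma only asserts a bound in expectation.
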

	
	\begin{prev}[\textbf{Proof of Lemma \ref{Lemmas2.6}}]
		Let us fix an arbitrary $j\in \{1,...,N\}$ and choose $\ell\in \{1,...,N-j\}$. Then, summing  \re{s2.3}$_1$ from $i=\ell+1$ to $i=\ell+j$ gives
		\begin{equation*}
			\frac{1}{h}(\bu^{\ell+j}-\bu^{\ell})+\sum_{i=\ell+1}^{\ell+j}[\eta A\bu^i+B^m(\bu^{i-1},\bu^i)-B_0^m(\theta_m(n^i),\phi)]=\frac{\alpha}{h}\sum_{i=\ell+1}^{\ell+j}\sum_{k=1}^3  \pi_m (\nabla\bu^{i-1} e_k \Delta^i W_k).
		\end{equation*}
		By taking  the $\el^2$-inner product of the above identity with $\bu^{\ell+j}-\bu^{\ell}$, then raising to the power 2 the resulting equation and summing from $\ell = 0$ to $\ell = N-j$, we obtain
		\begin{equation*}
			\begin{split}
				h\sum_{\ell=0}^{N-j}\abs{\bu^{\ell+j}-\bu^\ell}^4_{0,2}&\leq\bk  h^3\sum_{\ell=0}^{N-j} \valabs{\sum_{i=1}^{j} (\nabla\bu^{\ell+i}, \nabla( \bu^{\ell+j}-\bu^\ell))}^2\\
				&\qquad+\bk  h^3\sum_{\ell=0}^{N-j}\valabs{\sum_{i= 1}^{ j}(B(\bu^{\ell+i-1},\bu^{\ell+i}),\bu^{\ell+j}-\bu^\ell)}^2\\
				&\qquad+\bk  h^3\sum_{\ell=0}^{N-j}\valabs{\sum_{i=1}^{j}(B_0(\theta_m(n^{\ell+i}),\phi),\bu^{\ell+j}-\bu^\ell)}^2\\
				&=II_1+II_2+II_3.
			\end{split}
		\end{equation*}
		Here, we have used \re{3.33}  and  $\left(\sum_{k=1}^3  \nabla\bu^{i-1} e_k \Delta^i W_k,\bu^{\ell+j}-\bu^\ell\right)=0$ because $\nabla\cdot (\bu^{\ell+j}-\bu^\ell)=0$. By using  the H\"older inequality, the equivalence of norms on $\bh_m$, the Young inequality and \re{s2.21}$_1$, we see that
		\begin{equation*}
			\begin{split}
				II_1&\leq \bk_m j h^3\sum_{\ell=0}^{N-j} \sum_{i=1}^{j} \abs{\bu^{\ell+i}}_{0,2}^2 \abs{\bu^{\ell+j}-\bu^\ell}_{0,2}^2\\
				&\leq j^2 h^4 \bk_mT\sum_{\ell=0}^{N-j} \sum_{i=1}^{j} \abs{\bu^{\ell+i}}_{0,2}^4+\frac{1}{6T} j h^2\sum_{\ell=0}^{N-j} \abs{\bu^{\ell+j}-\bu^\ell}_{0,2}^4\\
				&\leq t_j^2 \bk_mT^3\max_{0\leq \ell\leq N} \abs{\bu^{\ell}}_{0,2}^4+\frac{1}{6} h\sum_{\ell=0}^{N-j} \abs{\bu^{\ell+j}-\bu^\ell}_{0,2}^4\\
				&\leq t_j^2 \bk_m+\frac{1}{6} h\sum_{\ell=0}^{N-j} \abs{\bu^{\ell+j}-\bu^\ell}_{0,2}^4.
			\end{split}
		\end{equation*}
		Here, we used the fact that $j(N-j+1) h^2=t_jt_{N-j+1}\leq T^2$ and $jh=t_j\leq T$. In a very similar way, we can prove that 
		\begin{equation*}
			\begin{split}
				II_2&\leq t_j^2 \bk_mT^3\max_{0\leq \ell\leq N} \abs{\bu^{\ell}}_{0,2}^8+\frac{1}{6} h\sum_{\ell=0}^{N-j} \abs{\bu^{\ell+j}-\bu^\ell}_{0,2}^4\\
				&\leq t_j^2 \bk_m+\frac{1}{6} h\sum_{\ell=0}^{N-j} \abs{\bu^{\ell+j}-\bu^\ell}_{0,2}^4.
			\end{split}
		\end{equation*}
		By using \re{3.3}, we get
		\begin{equation*}
			\begin{split}
				II_3&\leq \bk jh^3\sum_{\ell=0}^{N-j} \sum_{i=1}^{j} \abs{B_0(\theta_m(n^{\ell+i}),\phi)}_{0,2}^2 \abs{\bu^{\ell+j}-\bu^\ell}_{0,2}^2\\
				&\leq j^2h^4 \bk T\sum_{\ell=0}^{N-j} \sum_{i=1}^{j} \abs{B_0(\theta_m(n^{\ell+i}),\phi)}_{0,2}^4+\frac{1}{6T} jh^2\sum_{\ell=0}^{N-j} \abs{\bu^{\ell+j}-\bu^\ell}_{0,2}^4\\
				&\leq t_j^2 \bk h^2j(N-j+1)T(m+1)^4\abs{\nabla\phi}^4_{0,\infty}+\frac{1}{6} h\sum_{\ell=0}^{N-j} \abs{\bu^{\ell+j}-\bu^\ell}_{0,2}^4\\
				&\leq t_j^2 T^3\bk_m+\frac{1}{6} h\sum_{\ell=0}^{N-j} \abs{\bu^{\ell+j}-\bu^\ell}_{0,2}^4.
			\end{split}
		\end{equation*}
		From the above inequalities, we derive that $\frac{1}{2} h\sum_{\ell=0}^{N-j} \abs{\bu^{\ell+j}-\bu^\ell}_{0,2}^4\leq t_j^2 \bk_m$ and by taking the expectation on this inequality, we obtain the first inequality of Lemma \ref{Lemmas2.6}. 
		
		Now, we proceed to the proof of the second inequality. Summing \re{s2.3}$_2$ from $i=\ell+1$ to $i=\ell+j$ and using an integration by parts and \re{3.3} imply for any  $\psi\in \h^1$,
		\begin{equation*}
			\begin{split}
				\frac{1}{h}(c^{\ell+j}-c^{\ell},\psi)&\leq\sum_{i=1}^{j}\left[\eps \abs{\nabla c^{\ell+i}}_{0,2}+\abs{\bu^{\ell+i}}_{0,\infty}\abs{\nabla c^{\ell+i}}_{0,2} +(m+\eps_m+1)\abs{c^{\ell+i}}_{0,2}\right]\abs{\psi}_{1,2}\\
				&\qquad+\frac{\gamma}{h}\abs{\sum_{i=1}^{j} F^1_m(c^{\ell+i-1})g(c^{\ell+i-1})  \Delta^{\ell+i}\beta}_{0,2}\abs{\psi}_{0,2}.
			\end{split}
		\end{equation*}
		From this estimate and using   the equivalence of norms on $\bh_m$ and \re{s2.21}$_1$  we infer that 
		\begin{equation*}
			\begin{split}
				\be h\sum_{\ell=0}^{N-j}\abs{c^{\ell+j}-c^\ell}^4_{(H^1)'}&\leq \bk_m  h^5\be \sum_{\ell=0}^{N-j}\left(\sum_{i=1}^{j}\abs{\nabla c^{\ell+i}}_{0,2}\right)^4 +\bk  h^5 \be\sum_{\ell=0}^{N-j}\left(\sum_{i=1}^{j}\abs{c^{\ell+i}}_{0,2}\right)^4\\
				&\qquad+\gamma^4h\sum_{\ell=0}^{N-j}\be\abs{\sum_{i=1}^{j} F^1_m(c^{\ell+i-1})g( c^{\ell+i-1})  \Delta^{\ell+i}\beta}_{0,2}^4.
			\end{split}
		\end{equation*}
		Now, by	using  $(\sum_{i=1}^j a_i)^2\leq j\sum_{i=1}^j a_i^2$ and  \re{s2.21}$_3$ we get
		\begin{equation*}
			\begin{split}
				\bk_m  h^5\be \sum_{\ell=0}^{N-j}\left(\sum_{i=1}^{j}\abs{\nabla c^{\ell+i}}_{0,2}\right)^4 &\leq \bk_m h^5\be \sum_{\ell=0}^{N-j}\left(j\sum_{i=1}^{j}\abs{\nabla c^{\ell+i}}^2_{0,2}\right)^2\\
				&\leq \bk_m j^2 h^3 \sum_{\ell=0}^{N-j}\be\left(h\sum_{i=1}^{N}\abs{\nabla c^{i}}^2_{0,2}\right)^2\\
				&\leq \bk_m t^2_jh(N-j+1)\leq \bk_mT t^2_j,
			\end{split}
		\end{equation*}
		and 
		\begin{equation*}
			\bk  h^5 \be\sum_{\ell=0}^{N-j}\left(\sum_{i=1}^{j}\abs{c^{\ell+i}}_{0,2}\right)^4\leq \bk  h^5 \be\sum_{\ell=0}^{N-j}j^4\max_{1\leq i\leq N}\abs{c^{i}}^4_{0,2}\leq \bk_m t^2_j h^3j^2(N-j+1)\leq \bk_mT^3 t^2_j.
		\end{equation*}
		For the stochastic term, we write it as a stochastic integral of a piecewise constant integral, apply  the Burkholder–Davis–Gundy inequality, use the fact that $\abs{\mathbf{g}}_{\sobm^{1,\infty}} \leq 1$ and $F^1_m(c^{\ell+i-1})\abs{\nabla c^{\ell+i-1}}_{0,2}\leq m$ and we obtain
		\begin{equation*}
			\begin{split}
				\gamma^4h\sum_{\ell=0}^{N-j}\be\abs{\sum_{i=1}^{j} F^1_m(c^{\ell+i-1})g( c^{\ell+i-1})  \Delta^{\ell+i}\beta}_{0,2}^4&\leq \bk h\sum_{\ell=0}^{N-j}\be\left(h\sum_{i=1}^{j}(F^1_m(c^{\ell+i-1}))^2\abs{\nabla c^{\ell+i-1}}^2_{0,2} \right)^2 \\
				&\leq \bk m^4 j^2 h^3(N-j+1)\\
				&\leq \bk m^4t_j^2T.
			\end{split}
		\end{equation*}
		By combining the four last inequalities, we obtain the second inequality of Lemma \ref{Lemmas2.6}. 
		
		For the third inequality,  by summing \re{s2.3}$_3$ from $i=\ell+1$ to $i=\ell+j$ and using an integration by parts and \re{3.3}, we deduce that for any  $\psi\in \h^1$,
		\begin{equation*}
			\frac{1}{h}(n^{\ell+j}-n^{\ell},\psi)\leq\sum_{i=1}^{j}\left[\delta \abs{\nabla n^{\ell+i}}_{0,2}+\abs{\bu^{\ell+i}}_{0,\infty}\abs{\nabla n^{\ell+i}}_{0,2} +(m+1)\abs{\nabla c^{\ell+i}}_{0,2}\right]\abs{\psi}_{1,2},
		\end{equation*}
		from which the equivalence of norms on $\bh_m$,  \re{s2.21}$_1$,  \re{s2.21}$_3$ and \re{s2.21}$_6$  we  infer  that 
		\begin{equation*}
			\begin{split}
				\be h\sum_{\ell=0}^{N-j}\abs{n^{\ell+j}-n^\ell}^4_{(H^1)'}&\leq \bk_m h^5\be \sum_{\ell=0}^{N-j}\left(\sum_{i=1}^{j}\abs{\nabla n^{\ell+i}}_{0,2}\right)^4 +\bk h^5 \be\sum_{\ell=0}^{N-j}\left(\sum_{i=1}^{j}\abs{\nabla c^{\ell+i}}_{0,2}\right)^4\\
				&\leq \bk_m j^2h^3 \sum_{\ell=0}^{N-j}\be\left(h\sum_{i=1}^{N}\abs{\nabla n^{i}}^2_{0,2}\right)^2+\bk_m j^2h^3 \sum_{\ell=0}^{N-j}\be\left(h\sum_{i=1}^{N}\abs{\nabla c^{i}}^2_{0,2}\right)^2\\
				&\leq \bk_m t^2_jh(N-j+1)\leq \bk_mT t^2_j.
			\end{split}
		\end{equation*}
		This ends the proof of Lemma \ref{Lemmas2.6}.
	\end{prev}	 
	
	\section{Construction of the interpolants and proof of the main result} 
	In this section,  we will study the compactness of some interpolants of the sequences of the  random variables $\{\bu^\ell\}^N_{\ell=1}$, $\{c^\ell\}^N_{\ell=1}$ and $\{n^\ell\}^N_{\ell=1}$ constructed in the previous section.  More precisely, we define the  following piecewise continuous processes $\bu_N:[0,T]\times \Omega\to \bh_m$, $c_N:[0,T]\times \Omega\to D(A_1)$ and $n_N:[0,T]\times \Omega\to D(A_1)$ respectively by: For $t\in [0,T]$,
	\begin{equation*}
		\begin{split}
			&\bu_N(t)=\sum_{\ell=1}^N\left(\bu^{\ell-1}+\frac{\bu^\ell-\bu^{\ell-1}}{h}\left(t-t_{\ell-1}\right)\right)1_{[t_{\ell-1},t_\ell]}(t),\\
			&c_N(t)=\sum_{\ell=1}^N\left(c^{\ell-1}+\frac{c^\ell-c^{\ell-1}}{h}\left(t-t_{\ell-1}\right)\right)1_{[t_{\ell-1},t_\ell]}(t),\text{ and }\\
			&n_N(t)=\sum_{\ell=1}^N\left(n^{\ell-1}+\frac{n^\ell-n^{\ell-1}}{h}\left(t-t_{\ell-1}\right)\right)1_{[t_{\ell-1},t_\ell]}(t),
		\end{split}
	\end{equation*}
	and their piecewise constant extensions defined by
	\begin{equation*}
		\begin{split}
			&\hat{\bu}_N(t)=\sum_{\ell=1}^N\bu^{\ell}1_{(t_{\ell-1},t_\ell]}(t),\qquad\hat{c}_N(t)=\sum_{\ell=1}^Nc^{\ell}1_{(t_{\ell-1},t_\ell]}(t),\qquad\hat{n}_N(t)=\sum_{\ell=1}^Nn^{\ell}1_{(t_{\ell-1},t_\ell]}(t),\\
			&\hspace{2.5cm}\check{\bu}_N(t)=\sum_{\ell=1}^N\bu^{\ell-1}1_{[t_{\ell-1},t_\ell)}(t), \text{ and }\check{c}_N(t)=\sum_{\ell=1}^Nc^{\ell-1}1_{[t_{\ell-1},t_\ell)}(t).%,\text{ and } \check{n}_N(t)=\sum_{\ell=1}^Nn^{\ell-1}1_{[t_{\ell-1},t_\ell)}(t).
		\end{split}
	\end{equation*}
	We note that   the paths of the processes $\hat{\bu}_N$, $\hat{c}_N$, $\hat{n}_N$,  $\check{\bu}_N$ and  $\check{c}_N$   are not  continuous. The processes  $\check{\bu}_N$, $\check{c}_N$ and $\check{n}_N$ are $(\mathcal{F}_t)_{t\in [0,T]}$-adapted, but $\hat{\bu}_N$ and $\hat{c}_N$  are not.  The following result is a corollary of Lemma \ref{Lemmas2.5}.
	\begin{lemma}\label{Lemmas2.7}
		Let  $m\geq 1$ be a fixed integer. For any $p\in [1,2]$, there exists a constant $\bk_m>0$ such that for any $N\in \mathbb{N}$,
		\begin{equation}\label{s2.25}
			\begin{split}
				&\be \sup_{0\leq s\leq T}\abs{\bu_N(s)}_{0,2}^{2p}+\be \sup_{0\leq s\leq T}\abs{\hat{\bu}_N(s)}_{0,2}^{2p}+\be \sup_{0\leq s\leq T}\abs{\check{\bu}_N(s)}_{0,2}^{2p}\leq \bk_m,\\
				&\be \sup_{0\leq s\leq T}\abs{c_N(s)}_{0,2}^{2p}+\be \sup_{0\leq s\leq T}\abs{\hat{c}_N(s)}_{0,2}^{2p}+\be \sup_{0\leq s\leq T}\abs{\check{c}_N(s)}_{0,2}^{2p}\leq \bk_m,\\
				&\be \sup_{0\leq s\leq T}\abs{c_N(s)}_{1,2}^{2}+\be \sup_{0\leq s\leq T}\abs{\hat{c}_N(s)}_{1,2}^{2}+\be \sup_{0\leq s\leq T}\abs{\check{c}_N(s)}_{1,2}^{2}\leq \bk_m,\\
				&\be \sup_{0\leq s\leq T}\abs{n_N(s)}_{0,2}^{2p}+\be \sup_{0\leq s\leq T}\abs{\hat{n}_N(s)}_{0,2}^{2p}\leq \bk_m,
			\end{split}
		\end{equation}
		and 
		\begin{equation}\label{s2.26}
			\begin{split}
				&\be\int_0^T\abs{A_1c_N(s)}_{0,2}^2ds+\be\int_0^T\abs{A_1\hat{c}_N(s)}_{0,2}^2ds+\be\int_0^T\abs{A_1\check{c}_N(s)}_{0,2}^2ds\leq \bk_m,\\
				&\be\left(\int_0^T\abs{c_N(s)}_{1,2}^2ds\right)^{p}+\be\left(\int_0^T\abs{\hat{c}_N(s)}_{1,2}^2ds\right)^{p}+\be\left(\int_0^T\abs{\check{c}_N(s)}_{1,2}^2ds\right)^{p}\leq \bk_m,\\
				&\be\left(\int_0^T\abs{\bu_N(s)}_{0,2}^2ds\right)^{p}+\be\left(\int_0^T\abs{n_N(s)}_{1,2}^2ds\right)^{p}+\be\left(\int_0^T\abs{\hat{n}_N(s)}_{1,2}^2ds\right)^{p}\leq \bk_m.
			\end{split}
		\end{equation}
	\end{lemma}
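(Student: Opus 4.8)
The plan is to read off every assertion directly from the discrete bounds already established in Lemma \ref{Lemmas2.5}, since each interpolant is built pointwise in $t$ out of the iterates $\{\bu^\ell\}$, $\{c^\ell\}$, $\{n^\ell\}$. The one structural observation I would isolate first is the following: on each subinterval $[t_{\ell-1},t_\ell]$ the coefficient $(t-t_{\ell-1})/h$ lies in $[0,1]$, so the piecewise linear process $\bu_N(t)$ is a convex combination of $\bu^{\ell-1}$ and $\bu^\ell$, and the same holds for $c_N$, $n_N$ and, $A_1$ being linear, for $A_1 c_N$. Hence, by convexity of $x\mapsto x^2$, for any of the relevant norms and any $t\in[t_{\ell-1},t_\ell]$,
\[
\abs{\bu_N(t)}_{0,2}\le\max\{\abs{\bu^{\ell-1}}_{0,2},\abs{\bu^\ell}_{0,2}\}\le\max_{0\le i\le N}\abs{\bu^i}_{0,2},
\]
while the piecewise constant extensions obey $\sup_s\abs{\hat\bu_N(s)}_{0,2}=\max_{1\le\ell\le N}\abs{\bu^\ell}_{0,2}$ and $\sup_s\abs{\check\bu_N(s)}_{0,2}=\max_{0\le\ell\le N-1}\abs{\bu^\ell}_{0,2}$, with the analogous identities for $c$ and $n$. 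In this way every supremum appearing in \re{s2.25} is dominated by the corresponding discrete maximum.

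Given this, \re{s2.25} follows at once. The velocity bounds come from the pathwise estimate \re{s2.21}$_1$, which controls $\max_\ell\abs{\bu^\ell}^2_{0,2}$ uniformly in $\omega$, so every power $2p$ is trivially integrable. For the $\elm^2$-bounds on $c$ and $n$ at a general power $p\in[1,2]$ I would invoke the elementary inequality $x^{2p}\le x^2+x^4$ (for $x\ge0$) to reduce to the two endpoint powers, which are exactly \re{s2.21}$_2$--\re{s2.21}$_3$ for $c$ and \re{s2.21}$_5$--\re{s2.21}$_6$ for $n$; the $\h^1$-bound on $c$ in \re{s2.25}$_3$ is \re{s2.21}$_4$ combined with the $\elm^2$-bound already obtained.

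For the time-integral estimates \re{s2.26} I would pass from integrals to Riemann sums. Convexity gives $\abs{A_1 c_N(s)}^2_{0,2}\le\abs{A_1 c^{\ell-1}}^2_{0,2}+\abs{A_1 c^\ell}^2_{0,2}$ on $[t_{\ell-1},t_\ell]$, so that $\int_0^T\abs{A_1 c_N}^2_{0,2}\,ds\le 2h\sum_{i=0}^N\abs{A_1 c^i}^2_{0,2}$, whereas $\int_0^T\abs{A_1\hat c_N}^2_{0,2}\,ds=h\sum_{\ell=1}^N\abs{A_1 c^\ell}^2_{0,2}$ for the constant extensions; taking expectations, both are bounded by \re{s2.21}$_4$ together with the regularity $c^m_0\in\h^2$ of the initial datum. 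For the remaining bounds I would write $\int_0^T\abs{c_N}^2_{1,2}\,ds\le 2h\sum_i\abs{c^i}^2_{1,2}$, and similarly for $\bu_N$ and for $n_N$, $\hat n_N$ in $\h^1$, and then raise to the power $p\in[1,2]$ and use Jensen's inequality $\be X^p\le(\be X^2)^{p/2}$ to match the right-hand sides against the squared ``sum'' quantities $\eps^2\be(h\sum\abs{\nabla c^i}^2_{0,2})^2$ in \re{s2.21}$_3$ and $\delta^2\be(h\sum\abs{\nabla n^i}^2_{0,2})^2$ in \re{s2.21}$_6$, the $\bu_N$ estimate being again pathwise from \re{s2.21}$_1$.

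Since the whole statement is a reformulation of Lemma \ref{Lemmas2.5}, no genuine difficulty is expected; the only points that need care are the convexity argument, which is what lets the piecewise linear interpolants inherit the discrete maxima and Riemann sums without producing uncontrolled cross terms, and the systematic use of Jensen's inequality to cover the entire range $p\in[1,2]$ from the power-$2$ discrete estimates instead of treating each power separately.
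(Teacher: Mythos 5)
Your proposal is correct and takes essentially the same approach as the paper: the paper omits this proof entirely, stating only that it follows from Lemma \ref{Lemmas2.5} (as in the reference it cites), and your argument supplies exactly those details. The convexity observation reducing the interpolants to the discrete maxima, the Riemann-sum identities for the piecewise constant processes, the inequality $x^{2p}\le x^{2}+x^{4}$ together with Jensen's inequality to cover the whole range $p\in[1,2]$, and the use of the $\h^{2}$-regularity of the initial data to absorb the $\ell=0$ terms are all sound.
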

	The proof of Lemma \ref{Lemmas2.7} uses Lemma \ref{Lemmas2.5} and is very similar to  \cite[Proof of Lemma 4.1]{Deugoue}. Thus we omit the proof.  We also state the following proof, because its uses Lemma \ref{Lemmas2.5} and very similar to \cite[Proposition 3.11]{Razafimandimby}.
	\begin{lemma}\label{lemmas2.8}
		We have,
		\begin{equation*}
			\begin{split}
				&\lim_{N\longrightarrow\infty}\be\int_0^T\abs{\bu_N(s)-\hat{\bu}_N(s)}_{0,2}^2ds+\lim_{N\longrightarrow\infty}\be\int_0^T\abs{\bu_N(s)-\check{\bu}_N(s)}_{0,2}^2ds=0,\\
				&\lim_{N\longrightarrow\infty}\be\int_0^T\abs{c_N(s)-\hat{c}_N(s)}_{1,2}^2ds+\lim_{N\longrightarrow\infty}\be\int_0^T\abs{c_N(s)-\check{c}_N(s)}_{1,2}^2ds=0,\\
				&\hspace{3cm}\lim_{N\longrightarrow\infty}\be\int_0^T\abs{n_N(s)-\hat{n}_N(s)}_{0,2}^2ds=0.
			\end{split}
		\end{equation*} 
	\end{lemma}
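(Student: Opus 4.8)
The plan is to use the explicit piecewise-linear structure of the interpolants to reduce every limit to one of the increment bounds already established in Lemma~\ref{Lemmas2.5}, the extra factor $h=T/N$ then forcing each expression to vanish.

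First I would fix $t\in[t_{\ell-1},t_\ell]$ and record the pointwise identities. For the velocity,
\[
\bu_N(t)-\hat{\bu}_N(t)=\Big(\tfrac{t-t_{\ell-1}}{h}-1\Big)(\bu^\ell-\bu^{\ell-1}),\qquad
\bu_N(t)-\check{\bu}_N(t)=\tfrac{t-t_{\ell-1}}{h}\,(\bu^\ell-\bu^{\ell-1}),
\]
and the same identities hold verbatim with $\bu$ replaced by $c$ or $n$. The elementary computation $\int_{t_{\ell-1}}^{t_\ell}\big(\tfrac{t-t_{\ell-1}}{h}-1\big)^2dt=\int_{t_{\ell-1}}^{t_\ell}\big(\tfrac{t-t_{\ell-1}}{h}\big)^2dt=\tfrac{h}{3}$ (via the substitution $s=(t-t_{\ell-1})/h$) then gives, after summing over $\ell$ and taking expectations,
\[
\be\int_0^T\abs{\bu_N(s)-\hat{\bu}_N(s)}_{0,2}^2\,ds=\frac{h}{3}\,\be\sum_{\ell=1}^N\abs{\bu^\ell-\bu^{\ell-1}}_{0,2}^2,
\]
and likewise for the $\check{\bu}_N$ term.

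Next I would feed in the uniform increment bounds. By \re{s2.21}$_1$ the sum $\sum_{\ell=1}^N\abs{\bu^\ell-\bu^{\ell-1}}_{0,2}^2$ is bounded by a constant $\bk_m$ independent of $N$, so the right-hand side above is at most $\tfrac{T}{3N}\bk_m\to0$. For the $c_N$ statement I would split $\abs{\cdot}_{1,2}^2=\abs{\cdot}_{0,2}^2+\abs{\nabla\cdot}_{0,2}^2$ and apply the same identity to each piece, using the summability of $\sum_\ell\abs{c^\ell-c^{\ell-1}}_{0,2}^2$ from \re{s2.21}$_2$ together with that of $\sum_\ell\abs{\nabla c^\ell-\nabla c^{\ell-1}}_{0,2}^2$ from \re{s2.21}$_4$; again the prefactor $h/3$ drives the limit to zero, for both $\hat{c}_N$ and $\check{c}_N$. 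For $n_N$ in the $\elm^2$-norm the same argument uses the bound on $\sum_\ell\abs{n^\ell-n^{\ell-1}}_{0,2}^2$ from \re{s2.21}$_5$.

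There is no genuine obstacle here: the whole proof is the observation that the $\elm^2$-in-time distance between a piecewise-linear function and its nodal piecewise-constant companions on a uniform grid equals $h/3$ times the sum of squared nodal jumps, and Lemma~\ref{Lemmas2.5} controls exactly those jump-sums uniformly in $N$. The only point needing mild attention is that the $c_N$ limit in the $\h^1$-norm requires invoking the zeroth- and first-order increment estimates simultaneously.
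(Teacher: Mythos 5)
Your proof is correct and follows essentially the route the paper intends: the paper omits this proof, stating only that it relies on Lemma \ref{Lemmas2.5} and is analogous to Proposition 3.11 of Razafimandimby, and your argument is precisely that standard computation, reducing each $\elm^2$-in-time distance to $\tfrac{h}{3}$ times the sum of squared nodal increments controlled uniformly in $N$ by \re{s2.21}$_1$, \re{s2.21}$_2$, \re{s2.21}$_4$ and \re{s2.21}$_5$. The only point worth flagging is the one you already noted, namely that the $\h^1$-estimate for $c_N$ needs the zeroth-order and gradient increment bounds simultaneously, which is exactly why \re{s2.21}$_4$ is invoked alongside \re{s2.21}$_2$.
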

	Now, we state and prove the following crucial result.
	\begin{lemma}\label{lemmas2.9}
		For any integer $m\geq 1$, there exists a constant $\bk_m>0$ such that  for any integer $N\in \mathbb{N}$,
		\begin{equation*}
			\begin{split}
				&\be\sup_{0<\theta\leq T}\theta^{-\frac{1}{2}}\left(\int_\theta^{T-\theta}\abs{\bu_N(s+\theta)-\bu_N(s)}_{0,2}^4ds\right)^\frac{1}{4}\leq \bk_m,\\
				& \be\sup_{0<\theta\leq T}\theta^{-\frac{1}{2}}\left(\int_\theta^{T-\theta}\abs{c_N(s+\theta)-c_N(s)}_{(H^1)'}^4ds\right)^\frac{1}{4}\leq \bk_m,\\
				& \be\sup_{0<\theta\leq T}\theta^{-\frac{1}{2}}\left(\int_\theta^{T-\theta}\abs{n_N(s+\theta)-n_N(s)}_{(H^1)'}^4ds\right)^\frac{1}{4}\leq \bk_m.
			\end{split}
		\end{equation*}
	\end{lemma}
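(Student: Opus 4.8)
The plan is to derive all three bounds from the discrete increment estimates of Lemma \ref{Lemmas2.6}, exploiting the piecewise-linear-in-time structure of the interpolants. Fix $v_N\in\{\bu_N,c_N,n_N\}$ and the relevant norm $\abs{\cdot}$ (namely $\abs{\cdot}_{0,2}$ for $\bu_N$ and $\abs{\cdot}_{(\h^1)'}$ for $c_N,n_N$). Since $x\mapsto x^{1/4}$ is concave, Jensen's inequality reduces the claim to
\begin{equation*}
\be\sup_{0<\theta\le T}\theta^{-2}\int_\theta^{T-\theta}\abs{v_N(s+\theta)-v_N(s)}^4\,ds\le \bk_m .
\end{equation*}
For a grid shift $\theta=t_j$ and $s\in[t_{\ell-1},t_\ell]$ the interpolant gives the exact identity $v_N(s+t_j)-v_N(s)=(1-\lambda)(v^{\ell-1+j}-v^{\ell-1})+\lambda(v^{\ell+j}-v^\ell)$ with $\lambda=(s-t_{\ell-1})/h\in[0,1]$; by convexity of $x\mapsto x^4$ and integration over each cell this yields $\int_{t_j}^{T-t_j}\abs{v_N(s+t_j)-v_N(s)}^4ds\le h\sum_{\ell}\abs{v^{\ell+j}-v^\ell}^4=:D_j^v$. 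A general $\theta\in(t_{j-1},t_j]$ is split as a grid shift by $t_{j-1}$ plus a sub-step shift by $r=\theta-t_{j-1}\le h$; the latter is controlled by the Lipschitz-in-time bound on each cell, contributing at most $\bk\,h^{-2}D_1^v$ after division by $\theta^2$ (uniformly in $j$, since $\theta\ge\max(r,t_{j-1})$). Hence, pointwise in $\omega$,
\begin{equation*}
\sup_{0<\theta\le T}\theta^{-2}\int_\theta^{T-\theta}\abs{v_N(s+\theta)-v_N(s)}^4ds\le \bk\,h^{-2}D_1^v+\bk\max_{1\le j\le N}t_j^{-2}D_j^v .
\end{equation*}
Because $\be D_1^v\le\bk_m h^2$ by Lemma \ref{Lemmas2.6} (with $j=1$), the first term is harmless, and everything reduces to bounding $\be\max_{1\le j\le N}t_j^{-2}D_j^v$.

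For $v_N=\bu_N$ this is immediate: the proof of the first estimate of Lemma \ref{Lemmas2.6} is in fact pathwise, since it only uses the deterministic bound $\max_\ell\abs{\bu^\ell}_{0,2}\le\bk_m$ furnished by \re{s2.21}$_1$, which holds for every $\omega$. Thus $D_j^{\bu}\le\bk_m t_j^2$ for all $\omega$ and all $j$, so that $\max_j t_j^{-2}D_j^{\bu}\le\bk_m$ pointwise and the first assertion follows at once.

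For $v_N\in\{c_N,n_N\}$ I would split the increment $v^{\ell+j}-v^\ell=\sum_{i=\ell+1}^{\ell+j}(v^i-v^{i-1})$ into its drift and stochastic parts via the scheme \re{s2.3}. Inserting this into $D_j^v$ and following the estimates of Lemma \ref{Lemmas2.6}, the drift contribution is dominated, after division by $t_j^2$ and uniformly in $j$, by fixed random variables such as $\left(h\sum_i\abs{\nabla c^i}^2_{0,2}\right)^2$ and $\max_i\abs{c^i}^4_{0,2}$ (and their $n$-analogues built from \re{s2.21}$_6$), whose expectations are bounded by Lemma \ref{Lemmas2.5}; taking the maximum over $j$ therefore costs nothing. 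The stochastic contribution to $c^{\ell+j}-c^\ell$ is the increment $\gamma\sum_{i=\ell+1}^{\ell+j}F^1_m(c^{i-1})g(c^{i-1})\Delta^i\beta=\mathcal M(t_{\ell+j})-\mathcal M(t_\ell)$ of the stochastic integral $\mathcal M(t)=\gamma\int_0^t F^1_m(\check c_N)g(\check c_N)\,d\beta$, whose integrand is bounded by $\gamma m$ pathwise thanks to the truncation $F^1_m$ and $\abs{\mathbf g_k}_{1,\infty}\le1$. Consequently $\max_j t_j^{-2}h\sum_\ell\abs{\mathcal M(t_\ell+t_j)-\mathcal M(t_\ell)}^4\le\sup_{0<\theta\le T}\theta^{-2}\int_0^{T-\theta}\abs{\mathcal M(s+\theta)-\mathcal M(s)}^4ds$, and the required uniform-in-$N$ bound on its expectation is exactly the endpoint Nikolskii ($B^{1/2}_{4,\infty}$) regularity of a stochastic integral with bounded integrand, which I would obtain from a discrete Burkholder–Davis–Gundy/Doob maximal inequality together with the moment bound for $\Delta^i\beta$ already used in Lemma \ref{Lemmas2.5}. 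The $n$-equation is treated identically, and in fact more simply, since \re{s2.3}$_3$ carries no noise: only the drift analysis is needed.

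The main obstacle is the supremum over the continuous parameter $\theta$ sitting inside the expectation at the borderline smoothness $s=\tfrac12$. One cannot integrate it away: the Slobodeckij route, i.e. bounding $\be$ of the $\mathrm W^{1/2,4}$-seminorm $\int_0^T\theta^{-3}\int\abs{v_N(s+\theta)-v_N(s)}^4ds\,d\theta$, diverges logarithmically because Lemma \ref{Lemmas2.6} only gives $\theta^2$ decay. The supremum must therefore be retained and handled directly, and this is precisely why the deterministic (drift) contributions are isolated and estimated pathwise with $j$-independent dominating variables, while the genuinely stochastic contribution is reduced to the sharp endpoint maximal regularity of the discrete stochastic integral — the one place where a martingale maximal inequality, rather than the mean estimate of Lemma \ref{Lemmas2.6}, is indispensable.
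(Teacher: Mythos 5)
Your reduction to the discrete increment sums $D_j^v=h\sum_\ell\abs{v^{\ell+j}-v^\ell}^4$ is essentially the paper's own argument: the paper splits into the cases $0<\theta\le h$ and $h<\theta\le T$ and uses the cell-wise Lipschitz bound plus triangle inequalities exactly as you do, and your treatment of $\bu_N$ agrees with the paper's, since the first estimate of Lemma \ref{Lemmas2.6} is indeed pathwise (the noise term vanishes identically by the divergence-free structure, and \re{s2.21}$_1$ holds for every $\omega$), so that $\max_{1\le j\le N}t_j^{-2}D_j^{\bu}\le\bk_m$ almost surely. You are also right --- and here you are more careful than the paper, which proves only the $\bu_N$ inequality and dismisses the other two as ``very similar'' --- that the crux is the supremum over $\theta$ sitting inside the expectation, so the fixed-$j$ expectation bounds of Lemma \ref{Lemmas2.6} cannot simply be substituted under $\sup_\theta$. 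Your pathwise domination of the drift contributions by $j$-independent random variables such as $\bigl(h\sum_i\abs{\nabla c^i}^2_{0,2}\bigr)^2$ and $\bigl(h\sum_i\abs{\nabla n^i}^2_{0,2}\bigr)^2$, whose expectations are finite by Lemma \ref{Lemmas2.5}, is correct and in fact supplies a justification the paper omits; in particular it settles $n_N$ completely, since \re{s2.3}$_3$ carries no noise.

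The gap is in the stochastic contribution to $c_N$, precisely the point you yourself flag as indispensable. First, the asserted pointwise inequality $\max_j t_j^{-2}h\sum_\ell\abs{\mathcal{M}(t_\ell+t_j)-\mathcal{M}(t_\ell)}^4\le\sup_{0<\theta\le T}\theta^{-2}\int_0^{T-\theta}\abs{\mathcal{M}(s+\theta)-\mathcal{M}(s)}^4ds$ is false: grid sampling can catch the extremes of an oscillation; for instance for $\mathcal{M}(t)=\cos(\pi t/h)$ the left-hand side is at least $16T/h^2$ (take $j=1$), while the right-hand side stays below $8T/h^2$ for small $h$. This step is repairable by a three-term triangle inequality, at the price of $j$-independent local-increment corrections of the type $h^{-2}\sum_\ell\int_{t_\ell}^{t_{\ell+1}}\abs{\mathcal{M}(s)-\mathcal{M}(t_\ell)}^4ds$, which do have bounded expectation, so this flaw is not fatal. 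The fatal one is that the endpoint bound $\be\sup_{0<\theta\le T}\theta^{-2}\int_0^{T-\theta}\abs{\mathcal{M}(s+\theta)-\mathcal{M}(s)}^4ds\le\bk_m$ --- fourth-moment control of the $B^{1/2}_{4,\infty}$-norm of the stochastic integral --- is invoked, not proved, and it does \emph{not} follow ``from a discrete Burkholder--Davis--Gundy/Doob maximal inequality together with the moment bound for $\Delta^i\beta$'': those tools yield exactly the fixed-scale estimates already contained in Lemma \ref{Lemmas2.6}, and the supremum over scales is the entire difficulty (a union bound over $j$ with high moments costs a factor $N^{1/q}$ with BDG constants growing in $q$, and degenerates at the small scales $t_j\sim h$). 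To close the argument you must either quote or prove a genuine endpoint Besov/Nikolskii regularity theorem for martingales with pathwise bounded integrands (Roynette/Ciesielski-type, or a chaining argument). For what it is worth, the paper's own proof silently commits the same unjustified interchange of $\be$ and $\sup_\theta$ and never discusses $c_N$ at all; your proposal correctly isolates the obstruction, but it does not yet overcome it.
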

	\begin{prev}[\textbf{Proof of Lemma \ref{lemmas2.9}}]
		We start by noting that the idea of the proof is coming from \cite[Proof of Lemma 3.2]{Banas}.  We will prove only the first inequality since the proof of  two last inequalities is very similar. Let us take arbitrary $\theta\in (0,T]$. Since $t_N=Nh=T$, we have
		\begin{equation}\label{s2.27}
			\begin{split}
				I_N(\theta)&:=\int_\theta^{T-\theta}\abs{\bu_N(s+\theta)-\bu_N(s)}_{0,2}^4ds\\
				&\leq \int_0^{T-\theta}\abs{\bu_N(s+\theta)-\bu_N(s)}_{0,2}^4ds\\
				&= \left[\sum_{\ell=0}^{N-2}\left(\int_{t_{\ell}}^{t_{\ell+1}-\theta}+\int^{t_{\ell+1}}_{t_{\ell+1}-\theta}\right)+\int_{t_{N-1}}^{t_{N}-\theta}\right]\abs{\bu_N(s+\theta)-\bu_N(s)}_{0,2}^4ds.
			\end{split}
		\end{equation}
		We note that
		\begin{equation*}
			\be\sup_{0<\theta\leq T}\theta^{-\frac{1}{2}}\left(I_N(\theta\right))^\frac{1}{4}\leq \be\sup_{0<\theta\leq h}\theta^{-\frac{1}{2}}\left(I_N(\theta\right))^\frac{1}{4}+\be\sup_{h<\theta\leq T}\theta^{-\frac{1}{2}}\left(I_N(\theta\right))^\frac{1}{4}.
		\end{equation*}
		We then distinguish two cases: the case $0<\theta\leq h$ and the case $h<\theta\leq T$.
		
		\textbf{Case (a): $0<\theta\leq h$.}
		We recall that  for any $\ell\in \{0,1,...,N-1\}$,
		\begin{equation*}
			\abs{\bu_N(t)-\bu_N(s)}_{0,2}\leq \frac{\valabs{t-s}}{h}\abs{\bu^{\ell+1}-\bu^\ell}_{0,2}, \ \forall t,s\in [t_\ell,t_{\ell+1}].
		\end{equation*}
		Let  us fix $\ell\in \{0,...,N-1\}$ and take $s\in [t_{\ell+1} -\theta,t_{\ell+1}]\subset[t_\ell,t_{\ell+1}]$. Then $|t_{\ell+1} -s|\leq \theta$. Moreover, $s+\theta\in[t_{\ell+1}, t_{\ell+1} + \theta] \subset [t_{\ell+1}, t_{\ell+2}]$ and $|(s+\theta) -t_{\ell+1}| \leq \theta$. Hence,
		\begin{equation*}
			\begin{split}
				\abs{\bu_N(s+\theta)-\bu_N(s)}_{0,2}&\leq \abs{\bu_N(s+\theta)-\bu^{\ell+1}}_{0,2}+\abs{\bu^{\ell+1}-\bu_N(s)}_{0,2}\\
				&\leq\frac{\theta}{h} \abs{\bu_N^{\ell+2}-\bu^{\ell+1}}_{0,2}+\abs{\bu_N(t_{\ell+1})-\bu_N(s)}_{0,2}\\
				&\leq\frac{\theta}{h} \abs{\bu_N^{\ell+2}-\bu^{\ell+1}}_{0,2}+\frac{\valabs{t_{\ell+1}-s}}{h}\abs{\bu^{\ell+1}-\bu^\ell}_{0,2}\\
				&\leq\frac{\theta}{h} \abs{\bu_N^{\ell+2}-\bu^{\ell+1}}_{0,2}+\frac{\theta}{h}\abs{\bu^{\ell+1}-\bu^\ell}_{0,2}.
			\end{split}
		\end{equation*}
		This implies that  (since $\theta\leq h$)
		\begin{equation*}
			\begin{split}
				&\sum_{\ell=0}^{N-2}\int^{t_{\ell+1}}_{t_{\ell+1}-\theta}\abs{\bu_N(s+\theta)-\bu_N(s)}_{0,2}^4ds\\
				&\leq \bk\left(\frac{\theta}{h}\right)^4h\sum_{\ell=0}^{N-2}\abs{\bu_N^{\ell+2}-\bu^{\ell+1}}^4_{0,2}+\bk\left(\frac{\theta}{h}\right)^4h\sum_{\ell=0}^{N-2}\abs{\bu^{\ell+1}-\bu^\ell}^4_{0,2}\\
				&\leq\bk\left(\frac{\theta}{h}\right)^4h\sum_{\ell=0}^{N-1}\frac{\theta}{h}\abs{\bu^{\ell+1}-\bu^\ell}^4_{0,2}.
			\end{split}
		\end{equation*}
		Accordingly, now let $s \in  [t_\ell, t_{\ell+1} -\theta] \subset [t_\ell, t_{\ell+1}]$. Then also $s +\theta \in  [t_\ell, t_{\ell+1}]$ and hence
		\begin{equation*}
			\abs{\bu_N(s+\theta)-\bu_N(s)}_{0,2}\leq \frac{\theta}{h}\abs{\bu^{\ell+1}-\bu^\ell}_{0,2}.
		\end{equation*}
		Consequently, by using \re{s2.27}, we have $ I_N(\theta)\leq \bk\left(\frac{\theta}{h}\right)^4h\Sum_{\ell=0}^{N-1}\abs{\bu^{\ell+1}-\bu^\ell}^4_{0,2}$. Thus, 
		\begin{equation*}
			\sup_{0<\theta\leq h}\theta^{-\frac{1}{2}}\left(I_N(\theta\right))^\frac{1}{4}\leq\bk\sup_{0<\theta\leq h}\theta^{-\frac{1}{2}}\frac{\theta}{h}\left(h\sum_{\ell=0}^{N-1}\abs{\bu^{\ell+1}-\bu^\ell}^4_{0,2}\right)^\frac{1}{4}.
		\end{equation*}
		By using the H\"older inequality, the first inequality of Lemma \ref{Lemmas2.6} with $j=1$ (so that $t_j = t_1 = h$) and the fact that $\theta\leq h$,  we get 
		\begin{equation*}
			\be\sup_{0<\theta\leq h}\theta^{-\frac{1}{2}}\left(I_N(\theta\right))^\frac{1}{4}\leq\bk\sup_{0<\theta\leq h}\frac{\theta^\frac{1}{2}}{h}\left(\be h\sum_{\ell=0}^{N-1}\abs{\bu^{\ell+1}-\bu^\ell}^4_{0,2}\right)^\frac{1}{4}
			\leq \bk_m\sup_{0<\theta\leq h}\frac{\theta^\frac{1}{2}}{h^\frac{1}{2}}\leq \bk_m.
		\end{equation*} 
		\textbf{Case (b): $h<\theta\leq T$.} In this case, for any $\theta>h$, we can find $1\leq j\leq N-1$ and $\eta\in(0,1)$ such that $\theta=h(j+\eta)$. For any $\ell\in \{0,1,...,N-1\}$, let $s\in
		[t_\ell, t_{\ell+1}] \cap [0, T - \theta]$. Then, by the triangle inequality we have
		\begin{equation*}
			\begin{split}
				&\abs{\bu_N(s+\theta)-\bu_N(s)}_{0,2}\\
				&\leq \abs{\bu_N(s+\theta)-\bu_N(s+t_j)}_{0,2}+\abs{\bu_N(s+t_j)-\bu_N(t_{\ell+j})}_{0,2}\\
				&\qquad+\abs{\bu_N(t_{\ell+j})-\bu_N(t_\ell)}_{0,2}+\abs{\bu_N(t_\ell)-\bu_N(s)}_{0,2}\\
				&:= I+II+III+IV.
			\end{split}
		\end{equation*}
		We may proceed as in the case (a) to control the terms I, II and IV to  arrive at
		\begin{equation*}
			I_N(\theta)\leq 3\times 4^3h\sum_{\ell=0}^{N-1}\abs{\bu^{\ell+1}-\bu^\ell}^4_{0,2}+4^3h\sum_{\ell=0}^{N-j}\abs{\bu^{\ell+j}-\bu^\ell}^4_{0,2}.
		\end{equation*}
		By using the H\"older inequality and the first inequality of Lemma \ref{Lemmas2.6} as well as the fact that $t_1=h$,  $h<\theta$ and $\theta=t_j+h\eta>t_j$, with $\eta\in (0,1)$,  we get 
		\begin{equation*}
			\begin{split}
				\be\sup_{h<\theta\leq T}\theta^{-\frac{1}{2}}\left(I_N(\theta\right))^\frac{1}{4}&\leq\bk\sup_{h<\theta\leq T}\theta^{-\frac{1}{2}}\left(\be h\sum_{\ell=0}^{N-j}\abs{\bu^{\ell+1}-\bu^\ell}^4_{0,2}+\be h\sum_{\ell=0}^{N-j}\abs{\bu^{\ell+j}-\bu^\ell}^4_{0,2}\right)^\frac{1}{4}\\
				&\leq\bk_m\sup_{h<\theta\leq T}\theta^{-\frac{1}{2}}\left(t_1^2+t_j^2\right)^\frac{1}{4}\\
				&\leq\bk_m\sup_{h<\theta\leq T}\theta^{-\frac{1}{2}}\left(\theta^2+\theta^2\right)^\frac{1}{4}=\bk_m,
			\end{split}
		\end{equation*}
		which completes the proof of Lemma \ref{lemmas2.9}.
	\end{prev}
	\begin{remark}\label{remarks2.10} Owing to  \cite[Section 13, Corollary 24]{Sim1}, it follows from Lemma \ref{lemmas2.9} that for any $m\geq 1$, there exists a constant $\bk_m>0$ such that  for any $\zeta\in (0,\frac{1}{2})$ and any integer $N\in \mathbb{N}$,
		\begin{equation*}
			\be \abs{\bu_N}_{\sobm^{\zeta,4}(0,T;\bh_m)}+\be \abs{c_N}_{\sobm^{\zeta,4}(0,T;(\h^1)')}+\be \abs{n_N}_{\sobm^{\zeta,4}(0,T;(\h^1)')}\leq \bk_m.
		\end{equation*}
	\end{remark}
	
	Hereafter, for any integer $ N\in \mathbb{N}$ and  $t \in[0,T]$, we set 
	\begin{equation*}
		\ell_N (t):=\min\{\ell\in \{0,1...,N-1\}:\  t\in [t_\ell,t_{\ell+1}]\}.% \text{ and } t^N_\ell :=\ell_N(t)h.
	\end{equation*}
	Next, we claim that the processes $\bu_N$, $c_N$ and $n_N$ satisfy the following system of stochastic equation on the complete filtered probability space $(\Omega, \mathcal{F}, \mathbb{F}=(\mathcal{F}_t)_{t\in[0,T]}, \mathbb{P})$. The proof is very similar to \cite[Proof of Proposition 3.12]{Razafimandimby} (see also \cite[Section 4.2]{Glatt}).   For any integer $ N\in \mathbb{N}$, the processes $\bu_N$, $c_N$ and $n_N$ satisfy for any $t\in [0,T]$ and $\mathbb{P}$-a.s., 
	\begin{equation}\label{s2.28}
		\begin{split}
			& 	\bu_N(t)+\int_0^t[\eta A\hat{\bu}_N(s)+B^m(\check{\bu}_N(s),\hat{\bu}_N(s))-B_0^m(\theta_m(\hat{n}_N(s)),\phi)]d s\\
			&=\bu^m_0+\alpha\int_0^t \pi_m( \Pl(\mathbf{f}\check{	 \bu}_N(s))) d \bar W(s)+\bec_N^\bu(t),\\
			&	c_N(t)+ \int_0^t[\eps A_1\hat{c}_N(s) +B_1(\hat{\bu}_N(s),  \hat{c}_N(s))]d s\\
			&=c_0^m-\int_0^tB_2(\theta^{\eps_m}_m(\hat{n}_N(s)),\hat{c}_N(s))d s+\gamma\int_0^t F^1_m(\check{c}_N(s))g (\check{c}_N(s) ) d \beta(s)+\bec_N^c(t),\\
			&n_N(t)+ \int_0^t[\eps A_1\hat{n}_N(s) +B_1(\hat{\bu}_N(s),  \hat{n}_N(s))]d s=n_0^m+\int_0^tB_3(\theta_m(\hat{n}_N),\hat{c}_N)d s+\bec_N^n(t),
		\end{split}		
	\end{equation}
	in $\bh_m$, $\elm^2$ and $(\h^1)'$ respectively, where the error terms are defined by
	\begin{equation*}
		\begin{split}
			\bec_N^\bu(t)&:=\alpha\int_t^{t_{\ell_N(t)+1}}\pi_m( \Pl(\mathbf{f}\check{	 \bu}_N(s))) d \bar W(s)-\alpha\frac{h\wedge t}{h}\int_0^h\pi_m( \Pl(\mathbf{f}\check{	 \bu}_N(s))) d \bar W(s)\\
			&\qquad+ \int_0^t[\eta A\hat{\bu}_N(s)+B^m(\check{\bu}_N(s),\hat{\bu}_N(s))-B_0^m(\theta_m(\hat{n}_N(s)),\phi)]1_{[0,h]}(s)d s,
		\end{split}
	\end{equation*}
	\begin{equation*}
		\begin{split}
			\bec_N^c(t)&:=\gamma\int_t^{t_{\ell_N(t)+1}}F^1_m(\check{c}_N(s))g( \check{c}_N(s) ) d \beta(s)-\gamma\frac{h\wedge t}{h}\int_0^hF^1_m(\check{c}_N(s))g(\check{c}_N(s))  d \beta(s)\\
			&\qquad- \int_0^t[\eps A_1\hat{c}_N(s) +B_1(\hat{\bu}_N(s),  \hat{c}_N(s))-B_2(\theta^{\eps_m}_m(\hat{n}_N(s)),\hat{c}_N(s))]1_{[0,h]}(s)d s,
		\end{split}
	\end{equation*}
	and 
	\begin{equation*}
		\bec_N^n(t)=- \int_0^t[\eps A_1\hat{n}_N(s) +B_1(\hat{\bu}_N(s),  \hat{n}_N(s))+B_3(\theta_m(\hat{n}_N(s)),\hat{c}_N(s))]1_{[0,h]}(s)d s,
	\end{equation*}
	where $h\wedge t :=\min(h,t)$. 
	
	In what follows, we analyse the tightness of the law of processes we defined above. For this aim, we set 
	\begin{equation*}
		\begin{split}
			&\mathcal{Y}_\bu:=C([0,T];\bh_m)\cap  \elm^2([0,T];\bh_m),\quad  \hat{\mathcal{Y}}_n:= \elm^2_{weak}(0,T;\h^1), \quad \mathcal{Y}_W:=C([0,T];\mathbb{R}^3),\\			
			&\mathcal{Y}_c:=C([0,T];(\h^1)')\cap C([0,T];\h^1_{weak})\cap \elm^2(0,T;\h^1)\cap \elm^2_{weak}(0,T;D(A_1)),\\
			& \mathcal{Y}_n:=C([0,T];(\h^1)')\cap C([0,T];\elm^2_{weak})\cap \elm^2(0,T;\elm^2)\cap \elm^2_{weak}(0,T;\h^1),\\
			&\hat{\mathcal{Y}}_c:= \elm^2_{weak}(0,T;D(A_1)), \ \hat{\mathcal{Y}}_\bu:= \elm^2_{weak}(0,T;\bh_m).
		\end{split}
	\end{equation*}
	We define a sequence of $\mathbb{R}^3$-valued Wiener processes  $\{\beta^N, W^N\}_{N\in \mathbb{N}}$ defined by
	\begin{equation*}
		W^N:=(W_1,W_2,W_3) \text{ and } \beta^N:=(\beta_1,\beta_2,\beta_3), \ \forall N\in \mathbb{N}.
	\end{equation*}
	Let us denote the family of laws of $\{\bu_N\}_{N\in \mathbb{N}}$, $\{c_N\}_{N\in \mathbb{N}}$, $\{n_N\}_{N\in \mathbb{N}}$, $\{\hat{n}_N\}_{N\in \mathbb{N}}$,  $\{\hat{c}_N\}_{N\in \mathbb{N}}$, $\{\check{c}_N\}_{N\in \mathbb{N}}$,  $\{\hat{\bu}_N\}_{N\in \mathbb{N}}$, $\{\check{\bu}_N\}_{N\in \mathbb{N}}$,$\{W^N\}_{N\in \mathbb{N}}$ and $\{\beta^N\}_{N\in \mathbb{N}}$ on $\mathcal{Y}_\bu$, $\mathcal{Y}_c$, $\mathcal{Y}_n$, $\hat{\mathcal{Y}}_n$, $\hat{\mathcal{Y}}_c$, $\hat{\mathcal{Y}}_c$, $\hat{\mathcal{Y}}_\bu$, $\hat{\mathcal{Y}}_\bu$, $\mathcal{Y}_W$ and $\mathcal{Y}_W$ respectively by $\{\nu^\bu_N\}_{N\in \mathbb{N}}$, $\{\nu^c_N\}_{N\in \mathbb{N}}$, $\{\nu^n_N\}_{N\in \mathbb{N}}$, $\{\hat{\nu}^n_N\}_{N\in \mathbb{N}}$,   $\{\hat{\nu}^c_N\}_{N\in \mathbb{N}}$, $\{\check{\nu}^c_N\}_{N\in \mathbb{N}}$, $\{\hat{\nu}^\bu_N\}_{N\in \mathbb{N}}$, $\{\check{\nu}^\bu_N\}_{N\in \mathbb{N}}$,  $\{\nu^W_N\}_{N\in \mathbb{N}}$ and $\{\nu^\beta_N\}_{N\in \mathbb{N}}$ and  prove the following result.
	\begin{lemma}\label{Lemmas2.11}
		The family $\{(\nu^\bu_N,\nu^c_N,\nu^n_N,\hat{\nu}^n_N,  \hat{\nu}^c_N, \check{\nu}^c_N,\hat{\nu}^\bu_N, \check{\nu}^\bu_N, \nu^W_N,\nu^\beta_N)\}_{N\in \mathbb{N}}$ is  tight on  $$\mathcal{Y}:=\mathcal{Y}_\bu\times \mathcal{Y}_c\times \mathcal{Y}_n\times\hat{ \mathcal{Y}}_n\times\hat{ \mathcal{Y}}_c\times \hat{ \mathcal{Y}}_c\times \hat{ \mathcal{Y}}_\bu\times \hat{ \mathcal{Y}}_\bu\times \mathcal{Y}_W\times \mathcal{Y}_W.$$
	\end{lemma}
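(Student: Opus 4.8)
The plan is to establish tightness of each marginal separately and then to assemble them. Indeed, once every marginal law is tight on its own factor, for $\varepsilon>0$ one picks for each of the ten components a compact set carrying mass at least $1-\varepsilon/10$ uniformly in $N$; by Tychonoff's theorem their product is compact in $\mathcal{Y}$ and, by a union bound, the joint law of the whole tuple charges its complement by at most $\varepsilon$. Thus it suffices to treat the marginals one at a time.

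First I would dispose of the easy factors. The two Wiener marginals $\nu_N^W,\nu_N^\beta$ are independent of $N$ by construction, so each is a single Borel measure on the Polish space $\mathcal{Y}_W=C([0,T];\mathbb{R}^3)$ and is automatically tight. For the weak-$\elm^2$ marginals $\hat{\nu}_N^n,\hat{\nu}_N^c,\check{\nu}_N^c,\hat{\nu}_N^\bu,\check{\nu}_N^\bu$, closed balls of the separable reflexive spaces $\elm^2(0,T;\h^1)$, $\elm^2(0,T;D(A_1))$ and $\elm^2(0,T;\bh_m)$ are weakly compact and metrizable, so the uniform bounds of Lemma \ref{Lemmas2.5} (equivalently Lemma \ref{Lemmas2.7}) together with Chebyshev's inequality furnish, for $R$ large, a ball of mass at least $1-\varepsilon$. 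The marginal $\nu_N^\bu$ on $\mathcal{Y}_\bu=C([0,T];\bh_m)\cap\elm^2([0,T];\bh_m)$ uses Remark \ref{remarks2.10}: since $\bh_m$ is finite dimensional, $\sobm^{\zeta,4}(0,T;\bh_m)$ embeds compactly into $C([0,T];\bh_m)$ for $\zeta\in(1/4,1/2)$, hence into $\mathcal{Y}_\bu$, and Chebyshev again yields uniformly charged compact sets.

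The heart of the matter is $\nu_N^c$ and $\nu_N^n$, whose state spaces are intersections of four topologies. For a radius $R$ I would introduce the set $K_R$ of trajectories bounded by $R$ in $\elm^\infty(0,T;\h^1)\cap\elm^2(0,T;D(A_1))\cap\sobm^{\zeta,4}(0,T;(\h^1)')$ and argue that it is compact in $\mathcal{Y}_c$ by checking each factor: the strong factor $\elm^2(0,T;\h^1)$ follows from the Aubin--Lions--Simon theorem \cite{Sim1} with the compact embedding $D(A_1)\hookrightarrow\hookrightarrow\h^1$ and the fractional time bound; the factor $C([0,T];(\h^1)')$ follows from Arzel\`a--Ascoli, the $\elm^\infty(0,T;\h^1)$ bound giving pointwise relative compactness through $\h^1\hookrightarrow\hookrightarrow(\h^1)'$ and the fractional bound giving equicontinuity; the weak-$\elm^2$ factor is again boundedness; and the $C([0,T];\h^1_{weak})$ factor follows from the standard criterion for weakly continuous paths (boundedness in $\elm^\infty(0,T;\h^1)$ plus equicontinuity in $(\h^1)'$), see e.g. \cite{Razafimandimby,Glatt}. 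As the same trajectories realise all four properties, $K_R$ is compact in the intersection topology $\mathcal{Y}_c$, and the bounds of Lemma \ref{Lemmas2.5} and Remark \ref{remarks2.10} make $\nu_N^c(\mathcal{Y}_c\setminus K_R)$ uniformly small via Chebyshev. The treatment of $\nu_N^n$ is identical once $(\h^1,D(A_1),\elm^2(0,T;\h^1))$ is replaced by $(\elm^2,\h^1,\elm^2(0,T;\elm^2))$.

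The main obstacle I anticipate is precisely the simultaneous compactness in the four topologies of $\mathcal{Y}_c$ and $\mathcal{Y}_n$: the $C([0,T];X_{weak})$ factor is not reachable by Aubin--Lions and forces the weakly-continuous-path criterion, and one must check that a single family $K_R$ is relatively compact in every factor so as to be relatively compact in the intersection. The fractional time regularity of Remark \ref{remarks2.10}, which itself rests on the increment bound of Lemma \ref{lemmas2.9}, is the common input that simultaneously drives the Aubin--Lions step and supplies the equicontinuity needed for the two continuous-in-time factors; the whole scheme therefore hinges on that estimate holding with a finite moment.
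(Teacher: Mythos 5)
Your proposal is correct and follows essentially the same route as the paper's proof: componentwise tightness assembled through the product/union-bound argument, the Wiener marginals being constant in $N$, the weak-$\elm^2$ marginals handled by Banach--Alaoglu together with the bounds of Lemma \ref{Lemmas2.7} and Chebyshev, the velocity marginal by the fractional-Sobolev compact embedding in the finite-dimensional space $\bh_m$ (Remark \ref{remarks2.10} plus \re{s2.25}$_1$), and the $c$- and $n$-marginals driven by the same three inputs (the $\elm^\infty(0,T;\h^1)$ and $\elm^2(0,T;D(A_1))$ bounds and the $\sobm^{\zeta,4}(0,T;(\h^1)')$ increment estimate). The only difference is presentational: where you verify compactness of the set $K_R$ in the four-fold intersection $\mathcal{Y}_c$ (resp.\ $\mathcal{Y}_n$) factor by factor via Aubin--Lions--Simon, Arzel\`a--Ascoli and the weakly-continuous-path criterion, the paper first upgrades the $\sobm^{\zeta,4}(0,T;(\h^1)')$ bound to a $C^\beta([0,T];(\h^1)')$ bound and then cites the ready-made intersection compactness criterion of \cite[Lemma 3.3]{Bre7}, so your argument amounts to an unpacking of that cited lemma.
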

\begin{proof}[	\textbf{Proof of Lemma \ref{Lemmas2.11}}]
	  It is  sufficient to consider the tightness of each component of\\ $\{(\nu^\bu_N,\nu^c_N,\nu^n_N,\nu^W_N,\nu^\beta_N)\}_{N\in \mathbb{N}}$ since  a cartesian product of finite compact sets is compact.  To start, we note that since $\bh_m$ is a finite dimensional space, we have $\bh_m\hookrightarrow\hookrightarrow \bh_m\hookrightarrow \bh_m$. Then, by applying  \cite[Corollary 5]{Sim}, we derive that for any $\zeta\in (\frac{1}{4},\frac{1}{2})$, $\elm^\infty(0,T,\bh_m)\cap \sobm^{\zeta, 4}(0,T;\bh_m)\hookrightarrow\hookrightarrow C([0,T];\bh_m)$ and $\elm^2(0,T,\bh_m)\cap \sobm^{\zeta, 4}(0,T;\bh_m)\hookrightarrow\hookrightarrow \elm^2(0,T;\bh_m).$  With this compact embedding in hand, the tightness of  $\{\nu^\bu_N\}_{N\in \mathbb{N}}$ on $\mathcal{Y}_\bu$ follows from Remark \ref{remarks2.10} and the a priori estimate \re{s2.25}$_1$. Next,  we fix $\zeta\in (\frac{1}{4},\frac{1}{2})$ and $p=4$. Since $\zeta p\in (1,2)$, we see  that  $\sobm^{\zeta,4}(0,T;(\h^1)')\hookrightarrow C^\beta([0,T]; (\h^1)')$, for any fixed $\beta\in (0,4\zeta-1)$ (see \cite[Proof of Theorem 2.2]{Flan}). It  follows from Remark \ref{remarks2.10} that\\
	$\sup_{N\in \mathbb{N}}\be \abs{c_N}_{C^\beta([0,T]; (\h^1)')}<\infty \text{ and }\sup_{N\in \mathbb{N}} \be \abs{n_N}_{C^\beta([0,T]; (\h^1)')}<\infty.$ 
Thanks to these two  last inequalities and the estimates \re{s2.25} and \re{s2.26}, we can apply \cite[Lemma 3.3 ]{Bre7} and the Markov inequality to obtain the tightness of  $\{\nu^c_N\}_{N\in \mathbb{N}}$ on $\mathcal{Y}_c$ and  of $\{\nu^n_N\}_{N\in \mathbb{N}}$ on $\mathcal{Y}_n$.  By the Banach Alaoglu theorem, Bounded sets of  $\elm^2(0,T;\h^1)$ are compact in $L_{weak}^2(0,T;\h^1)$,  the tightness of $\{\hat{\nu}^n_N\}_{N\in \mathbb{N}}$ on $\hat{\mathcal{Y}}_n$ easily follows from  \re{s2.26}$_3$. In a very similar way, we can obtain the tightness of $\check{\nu}^n_N$, $\hat{\nu}^c_N$, $\check{\nu}^c_N$, $\hat{\nu}^\bu_N$ and $ \check{\nu}^\bu_N$. The tightness of $\{\nu^W_N\}_{N\in \mathbb{N}}$ on $\mathcal{Y}_W$ and $\{\nu^\beta_N\}_{N\in \mathbb{N}}$ on $\mathcal{Y}_W$ comes from the fact that by construction each of those families is reduced to   one element, and the Lemma \ref{Lemmas2.11} is then proved.
\end{proof}
	
	\textbf{Step 4: Passage to the limit.} In this step, we construct a martingale solution of problem \re{3.2}.  By the  Prokhorov Theorem \cite[Theorem 5.1]{Billing}, one can  find a subsequence of $$\{(\nu^\bu_N,\nu^c_N,\nu^n_N,\hat{\nu}^n_N,  \hat{\nu}^c_N, \check{\nu}^c_N,\hat{\nu}^\bu_N, \check{\nu}^\bu_N, \nu^W_N,\nu^\beta_N)\}_{N\in \mathbb{N}},$$ still labelled by $N$, which convergences to a probability measure  $\{(\nu^\bu,\nu^c,\nu^n,\hat{\nu}^n,   \hat{\nu}^c, \check{\nu}^c,\hat{\nu}^\bu, \check{\nu}^\bu, \nu^W,\nu^\beta)\}$ on  $\mathcal{Y}$. Since  $\mathcal{Y}_c$, $\mathcal{Y}_n$ and $\hat{\mathcal{Y}}_n$  are not  metric space, we cannot apply  the classical  Skorokhod representation theorem. We instead use the  Jakubowski-Skorokhod  representation  theorem, see \cite[Theorem 2]{Jakubowski}, which is appropriate to our framework to infer the existence of  a new probability space $(\bar {\Omega}_m,\bar {\mathcal{F}}_m,\bar {\mathbb{P}}_m)$ on which one can find a sequence of $\mathcal{Y}$-valued random variables $\{(\bar \bu_N, \bar c_N, \bar n_N,  \hat{\bar n}_N,    \hat{\bar c}_N,  \check{\bar c}_N,   \hat{\bar \bu}_N,  \check{\bar \bu}_N, \bar {\beta}_N, \bar W_N); \ N\in \mathbb{N}\}$ such that its family of laws on $\mathcal{Y}$ is equal to $\{(\nu^\bu_N,\nu^c_N,\nu^n_N,\hat{\nu}^n_N,  \hat{\nu}^c_N, \check{\nu}^c_N,\hat{\nu}^\bu_N, \check{\nu}^\bu_N, \nu^W_N,\nu^\beta_N)\}_{N\in \mathbb{N}}$. On $(\bar {\Omega}_m,\bar{ \mathcal{F}}_m,\bar{ \mathbb{P}}_m)$  one can also find a $\mathcal{Y}$-valued random variable 
	$(\bu_m,c_m,n_m,\hat n_m, \hat c_m, \check c_m, \hat \bu_m, \check \bu_m, \bar{ \beta}^m, \bar W^m)$  such that the following convergences hold $\bar {\mathbb{P}}_m$-a.s.
	\begin{equation}\label{s2.29}
		\begin{split}
			&\bar \bu_N\longrightarrow \bu_m \text{ in } C([0,T];\bh_m)\cap \elm^2(0,T;\bh_m),\\
			&\bar c_N\longrightarrow c_m \text{ in } C([0,T];(\h^1)')\cap C([0,T];\h^1_{weak})\cap \elm^2(0,T;\h^1)\cap \elm^2_{weak}(0,T;D(A_1)),\\
			&\bar n_N\longrightarrow n_m \text{ in } C([0,T];(\h^1)')\cap C([0,T];\elm^2_{weak})\cap \elm^2(0,T;\elm^2)\cap \elm^2_{weak}(0,T;\h^1),\\
			&\hat{\bar n}_N\longrightarrow \hat n_m \text{ in }  \elm^2_{weak}(0,T;\h^1),\quad \bar W^N\longrightarrow \bar W^m\text{ and } \bar {\beta}^N\longrightarrow \bar{\beta}^m\text{ in } C([0,T];\mathbb{R}^3),\\
			&\check{\bar c}_N\longrightarrow \check c_m \text{ in }  \elm^2_{weak}(0,T;D(A_1)),\quad\hat{\bar c}_N\longrightarrow \hat c_m \text{ in }  \elm^2_{weak}(0,T;D(A_1)),\\
			&\check{\bar \bu}_N\longrightarrow \check \bu_m \text{ in }  \elm^2_{weak}(0,T;\bh_m),\quad\hat{\bar \bu}_N\longrightarrow \hat \bu_m \text{ in }  \elm^2_{weak}(0,T;\bh_m),.
		\end{split}
	\end{equation}
	Since      $(\bar {\Omega}_m,\bar {\mathcal{F}}_m,\bar{\mathbb{P}}_m)$ is constructed from the   Jakubowski-Skorokhod  representation  theorem, it is independent of $m$. Indeed,  one may choose for any $m\geq 1$,  $(\bar {\Omega}_m,\bar {\mathcal{F}}_m,\bar{\mathbb{P}}_m)= ([0, 1], \mathcal{B}([0, 1]), \mu_0)$ where $\mu_0$ denotes the $1$-dimensional Lebesgue measure,   see for instance  \cite[Proof of Theorem 2]{Jakubowski}. Hereafter, we set
	\begin{equation*}
		(\bar {\Omega},\bar {\mathcal{F}},\bar{\mathbb{P}}):=([0, 1], \mathcal{B}([0, 1]), \mu_0)=(\bar {\Omega}_m,\bar {\mathcal{F}}_m,\bar{\mathbb{P}}_m), \ \forall m\geq 1.
	\end{equation*}
	
	Sine $\elm^2(0,T;\elm^2)$ and $\elm^2(0,T;\h^1)$ are Polish spaces, by the Kuratowski theorem (\cite[Theorem 1.1, P.5]{Vakhania}), $\elm^\infty(0,T;\elm^2)$ is a Borel set of  $\elm^2(0,T;\elm^2)$ and in the same way, $\elm^\infty(0,T;\h^1)$ is  Borel set of $\elm^2(0,T;\h^1)$. In addition of this, the equality of law of \\$\{(\bar \bu_N, \bar c_N, \bar n_N,\hat{\bar n}_N,\bar {\beta}_N, \bar W_N); \ N\in \mathbb{N}\}$ and  $\{(\nu^\bu_N,\nu^c_N,\nu^n_N,\hat \nu^n_N,\nu^W_N,\nu^\beta_N)\}_{N\in \mathbb{N}}$ on $\mathcal{Y}$ and Lemma \ref{Lemmas2.7}, imply that  for any $p\in [1,2]$, there exists a constant $\bk_m>0$ such that for any $N\in \mathbb{N}$,
	\begin{equation}\label{s2.30}
		\begin{split}
			&\bar \be\sup_{0\leq s\leq T}\abs{\bar\bu_N}_{0,2}^{2p}+\bar \be\sup_{0\leq s\leq T}\abs{\bar c_N}_{0,2}^{2p}+\bar \be\sup_{0\leq s\leq T}\abs{\bar c_N}_{1,2}^{2}+\bar \be\sup_{0\leq s\leq T}\abs{\bar n_N}_{0,2}^{2p}\leq \bk_m,\\
			&\bar \be\left(\int_0^T\abs{\bar \bu_N(s)}_{0,2}^2ds\right)^{p}+\bar \be\int_0^T\abs{A_1\bar c_N(s)}_{0,2}^2ds+\bar \be\left(\int_0^T\abs{\check{\bar c}_N(s)}_{1,2}^2ds\right)^{p}\leq \bk_m,\\
			&\bar \be\left(\int_0^T\abs{\bar c_N(s)}_{1,2}^2ds\right)^{p}+\bar \be\left(\int_0^T\abs{\bar n_N(s)}_{1,2}^2ds\right)^{p}+\bar \be\left(\int_0^T\abs{\hat{\bar n}_N(s)}_{1,2}^2ds\right)^{p}\leq \bk_m,\\
			&\bar \be\left(\int_0^T\abs{\hat{\bar \bu}_N(s)}_{0,2}^2ds\right)^{p}+\bar \be\left(\int_0^T\abs{\check{\bar \bu}_N(s)}_{0,2}^2ds\right)^{p}\leq \bk_m,\\
			%&\bar \be\sup_{0\leq s\leq T}\abs{\hat{\bar \bu}_N(s)}_{0,2}^{2p}+\bar \be\sup_{0\leq s\leq T}\abs{\check{\bar \bu}_N(s)}_{0,2}^{2p}+\bar \be\sup_{0\leq s\leq T}\abs{\hat{\bar c}_N(s)}_{0,2}^{2p}+\bar \be\sup_{0\leq s\leq T}\abs{\check{\bar c}_N(s)}_{0,2}^{2p}\leq \bk_m,\\
			%&\bar \be\sup_{0\leq s\leq T}\abs{\hat{\bar n}_N(s)}_{0,2}^{2p}+\bar \be\sup_{0\leq s\leq T}\abs{\hat{\bar c}_N(s)}_{1,2}^{2}+\bar \be\sup_{0\leq s\leq T}\abs{\check{\bar c}_N(s)}_{1,2}^{2}\leq \bk_m,\\
			&\bar \be\int_0^T\abs{A_1\hat{\bar c}_N(s)}_{0,2}^2ds+\bar \be\int_0^T\abs{A_1\check{\bar c}_N(s)}_{0,2}^2ds+\bar \be\left(\int_0^T\abs{\hat{\bar c}_N(s)}_{1,2}^2ds\right)^{p}\leq \bk_m,\\
			&\bar \be\sup_{0\leq s\leq T}\abs{\hat{\bar \bu}_N(s)}_{0,2}^{2p}+\bar \be\sup_{0\leq s\leq T}\abs{\check{\bar \bu}_N(s)}_{0,2}^{2p}+\bar \be\sup_{0\leq s\leq T}\abs{\hat{\bar c}_N(s)}_{1,2}^{2p}\leq \bk_m.
		\end{split}
	\end{equation}
	
	In fact, since the space $\elm^2(0,T;\h^1)$ is a separable Banach space, the borel set of $\elm^2(0,T;\h^1)$ coincides with the borel sets of $\elm^2_{weak}(0,T;\h^1)$  (the same remark holds for $\elm^2(0,T;D(A_1))$).
	
	By using \re{s2.30} and the convergence \re{s2.29}, we derive that  for any $p\in [1,2]$, 
	\begin{equation}\label{s2.31}
		\begin{split}
			& \elm^{2p}(\bar {\Omega},\bar{ \mathcal{F}},\bar{ \mathbb{P}}; \elm^2(0,T;\bh_m)),\qquad c_m\in  \elm^{2p}(\bar {\Omega},\bar{ \mathcal{F}},\bar{ \mathbb{P}}; \elm^2(0,T;\h^1)),\\
			&\hspace{3 cm}n_m\in \elm^{2p}(\bar {\Omega},\bar{ \mathcal{F}},\bar{ \mathbb{P}}; \elm^2(0,T;\elm^2)).
		\end{split}
	\end{equation}
	In fact,   due to the strong convergence almost surely in $\elm^2(0,T;\elm^2)$, we have for some integer $N_0>0$, $\int_0^T\abs{n_m(s)}^2_{0,2}ds\leq \int_0^T\abs{\bar n_N(s)}^2_{0,2}ds$, $\bar{\mathbb{P}}$-a.s., for any $N\geq N_0$ and  therefore,  $$\bar \be\left( \int_0^T\abs{n_m(s)}^2_{0,2}ds\right)^p\leq \sup_{N\geq N_1}\bar \be\left(\int_0^T\abs{\bar n_N(s)}^2_{0,2}ds\right)^p\leq \bk_m.$$
	Moreover, by using \re{s2.31}, \re{s2.30}, the convergence \re{s2.29} and the Vitali convergence theorem, we derive that 
	\begin{equation}\label{s2.32}
		\begin{split}
			&\lim_{N\longrightarrow\infty}\bar \be\int_0^T\abs{\bar \bu_N(s)-\bu_m(s)}_{0,2}^2ds = \lim_{N\longrightarrow\infty}\bar \be\int_0^T\abs{\bar c_N(s)-c_m(s)}_{1,2}^2ds=0,\\
			&\hspace{3cm}\lim_{N\longrightarrow\infty}\bar \be\int_0^T\abs{\bar n_N(s)-n_m(s)}_{0,2}^2ds=0.
		\end{split}
	\end{equation}
	From the equality of laws of $(\bu_N,c_N,\hat{\bu}_N,\hat{c}_N, \hat{n}_N, \check{\bu}_N,\check{c}_N)$ and $(\bar \bu_N,\bar c_N,\hat{\bar \bu}_N,\hat{\bar c}_N, \hat{\bar n}_N, \check{\bar \bu}_N,\check{\bar c}_N)$, the convergence \re{s2.32} and the Lemma \ref{lemmas2.8} we deduce that 
	\begin{equation*}
		\begin{split}
			&\lim_{N\longrightarrow\infty}\bar \be\int_0^T\abs{\hat{\bar \bu}_N(s)-\bu_m(s)}_{0,2}^2ds = \lim_{N\longrightarrow\infty}\bar \be\int_0^T\abs{\check{\bar \bu}_N(s)-\bu_m(s)}_{0,2}^2ds =0,\\
			& \lim_{N\longrightarrow\infty}\bar \be\int_0^T\abs{\hat{\bar c}_N(s)-c_m(s)}_{1,2}^2ds= \lim_{N\longrightarrow\infty}\bar \be\int_0^T\abs{\check{\bar c}_N(s)-c_m(s)}_{1,2}^2ds=0,\\
			&\hspace{3cm}\lim_{N\longrightarrow\infty}\bar \be\int_0^T\abs{\hat{\bar n}_N(s)-n_m(s)}_{0,2}^2ds=0.
		\end{split}
	\end{equation*}
	Therefore, up to a subsequence, the following convergences holds, $\bar{ \mathbb{P}}$-a.s.
	\begin{equation}\label{s2.33}
		\begin{split}
			&\lim_{N\longrightarrow\infty} \int_0^T\abs{\hat{\bar \bu}_N(s)-\bu_m(s)}_{0,2}^2ds =  \lim_{N\longrightarrow\infty}\int_0^T\abs{\check{\bar \bu}_N(s)-\bu_m(s)}_{0,2}^2ds =0,\\
			& \lim_{N\longrightarrow\infty}\int_0^T\abs{\hat{\bar c}_N(s)-c_m(s)}_{1,2}^2ds=  \lim_{N\longrightarrow\infty}\int_0^T\abs{\check{\bar c}_N(s)-c_m(s)}_{1,2}^2ds=0,\\
			&\hspace{3cm}\lim_{N\longrightarrow\infty}\int_0^T\abs{\hat{\bar n}_N(s)-n_m(s)}_{0,2}^2ds=0.
		\end{split}
	\end{equation}
	Hereafter, we work with these subsequences.

	Let $\mathcal{N}$, and $\bar{ \mathcal{N}}$ be the set of null sets of $\mathcal{F}$ and $\bar{\mathcal{F}}$ respectively. Let us  consider the following filtrations of $\mathcal{F}$ and of  $\bar{\mathcal{F}}$  defined respectively  by for $t\in [0,T]$,
	\begin{equation*}
		\mathcal{F}^N_t=\sigma\left( \sigma\left((\check{\bu}_N(s), \check{c}_N(s), W^N(s), \beta^N(s)): s\leq t \right)\cup \mathcal{N}\right),  \ N \geq 1,
	\end{equation*}
	and 
	\begin{equation*}
		\bar{	\mathcal{F}}^{N}_t=\sigma\left( \sigma\left((\check{\bar{\bu}}_N(s),\check{\bar c}_N(s), \bar W^N(s),\bar \beta^N(s)): s\leq t \right)\cup \mathcal{N}'\right),  \ N \geq 1.
	\end{equation*}
	Since  $(W^N,\beta^N)$ has the same law as $(W,\beta)$, it is not difficult to prove that $(W^N,\beta^N)$  (resp. $(\bar W^N,\bar \beta^N)$ ) are $\{\mathcal{F}^{N}_t\}_{t\in [0,T]}$ (resp. $\{\bar{\mathcal{F}}^{N}_t\}_{t\in [0,T]}$)  Wiener processes. Since $\bu_N$, $n_N$, $n_N$, $\hat{\bu}_N$, $\check{ c}_N$, $\check{n}_N$, $\check{\bu}_N$, $\check{ c}_N$, $\check{ n}_N$, $ W^N$ and  $ \beta^N(s)$ satisfy   \re{s2.28},  we can follow the exact same lines as the proof of   \cite[Proposition 6.3]{Raza1} to  derive that on the new filtered probability space  $(\bar {\Omega},\bar {\mathcal{F}}_m,\{\bar {\mathcal{F}}^{N}_t\}_{t\in [0,T]},\bar{ \mathbb{P}})$ the processes $\bar \bu_N$, $\bar n_N$, $\bar{n}_N$, $\hat{\bar{\bu}}_N$, $\check{\bar c}_N$, $\check{\bar n}_N$, $\check{\bar{\bu}}_N$, $\check{\bar c}_N$, $\check{\bar n}_N$, $\bar W^N$ and  $\bar \beta^N$ satisfy the following system for any $t\in [0,T]$ and $\bar{ \mathbb{P}}$-a.s., 
	\begin{equation}\label{s2.35}
		\begin{split}
			& 	\bar \bu_N(t)+\int_0^t[\eta A\hat{	\bar \bu}_N(s)+B^m(\check{	\bar \bu}_N(s),\hat{	\bar \bu}_N(s))-B_0^m(\theta_m(\hat{	\bar n}_N(s)),\phi)]d s\\
			&=\bu^m_0+\alpha\int_0^t  \pi_m( \Pl(\mathbf{f}\check{	\bar \bu}_N(s))) d \bar W^N(s)+	\bar \bec_N^\bu(t),\\
			&		\bar c_N(t)+ \int_0^t[\eps A_1\hat{	\bar c}_N(s) +B_1(\hat{	\bar \bu}_N(s),  \hat{	\bar c}_N(s))]d s\\
			&=c_0^m-\int_0^tB_2(\theta^{\eps_m}_m(\hat{	\bar n}_N(s)),\hat{\bar c}_N(s))d s+\gamma\int_0^t F^1_m(\check{	\bar c}_N(s))g( \check{	\bar c}_N(s))  d \bar \beta^N(s)+	\bar \bec_N^c(t),\\
			&	\bar n_N(t)+ \int_0^t[\eps A_1\hat{	\bar n}_N(s) +B_1(\hat{	\bar \bu}_N(s),  \hat{	\bar n}_N(s))]d s=n_0^m+\int_0^tB_3(\theta_m(\hat{	\bar n}_N),\hat{\bar c}_N)d s+	\bar \bec_N^n(t),
		\end{split}		
	\end{equation}
	in $\bh_m$, $\elm^2$ and $(\h^1)'$ respectively, where the error terms are defined as,
	\begin{equation*}
		\begin{split}
			\bar \bec_N^\bu(t)&:=\alpha\int_t^{t_{\ell_N(t)+1}} \pi_m( \Pl(\mathbf{f}\check{	\bar \bu}_N(s))) d \bar W^N(s)-\alpha\frac{h\wedge t}{h}\int_0^h \pi_m( \Pl(\mathbf{f}\check{	\bar \bu}_N(s))) d \bar W^N(s)\\
			&\qquad+ \int_0^t[\eta A\hat{	\bar \bu}_N(s)+B^m(\check{	\bar \bu}_N(s),\hat{	\bar \bu}_N(s))-B_0^m(\theta_m(\hat{	\bar n}_N(s)),\phi)]1_{[0,h]}(s)d s,,
		\end{split}
	\end{equation*}
	\begin{equation*}
		\begin{split}
			\bar \bec_N^c(t)&:=\gamma\int_t^{t_{\ell_N(t)+1}}F^1_m(\check{	\bar c}_N(s))g(\check{	\bar c}_N(s))  d \bar \beta^N(s)-\gamma\frac{h\wedge t}{h}\int_0^hF^1_m(\check{	\bar c}_N(s))g( \check{	\bar c}_N(s)  )d \bar \beta^N(s)\\
			&\qquad- \int_0^t[\eps A_1\hat{	\bar c}_N(s) +B_1(\hat{	\bar \bu}_N(s),  \hat{	\bar c}_N(s))-B_2(\theta^{\eps_m}_m(\hat{	\bar n}_N(s)),\hat{	\bar c}_N(s))]1_{[0,h]}(s)d s,
		\end{split}
	\end{equation*}
	and 
	\begin{equation*}
		\bar \bec_N^n(t)=- \int_0^t[\eps A_1\hat{	\bar n}_N(s) +B_1(\hat{	\bar \bu}_N(s),  \hat{	\bar n	}_N(s))+B_3(\theta_m(\hat{	\bar n}_N(s)),\hat{\bar c}_N(s))]1_{[0,h]}(s)d s,
	\end{equation*}
	By following  \cite[Proposition 4.4]{Razafimandimby},  we can prove that the limit processes  $\bar W^m$ and  $\bar \beta^m$ are $\mathbb{R}^3$-valued Brownian motions on the filtered probability space $(\bar {\Omega},\bar {\mathcal{F}},\{\bar {\mathcal{F}}^m_t\}_{t\in [0,T]},\bar{ \mathbb{P}})$  where the filtration $\{\bar {\mathcal{F}}^m_t\}_{t\in [0,T]}$ is defined by 
	\begin{equation*}
		\bar {\mathcal{F}}^m_t=\sigma\left( \sigma\left((\bu_m(s), c_m(s), n_m(s),\bar W^m(s),\bar \beta^m(s)): s\leq t \right)\cup \bar{\mathcal{N}}\right),
	\end{equation*}
	Our aim now is to pass to the limit to the system \re{s2.35}. We start by proving the following lemma. 
	\begin{lemma}\label{lemma 3.17}
		We have the following equalities $\bar{ \mathbb{P}}$-a.s.,
		\begin{equation*}
			\hat \bu_m=\bu_m, 	\ \check \bu_m=\bu_m, \	\check c_m=c_m, 	\ \hat c_m=c_m, \	\hat n_m=n_m.
		\end{equation*}
		\begin{proof}[\textbf{Proof of Lemma \ref{lemma 3.17}}]
			We will only  prove that $\hat n_m=n_m$, $\bar{ \mathbb{P}}$-a.s., since the rest will be similar. By the convergence \re{s2.33}, we derive that $\{\hat{\bar n}_N\}_{N\in \mathbb{N}}$ convergences to $n_m$ strongly on $\elm^2(0,T;\elm^2)$ and therefore converges also to $n_m$ in  $L_{weak}^2(0,T;\elm^2)$.  In the other hand, from the convergence \re{s2.29}$_3$, $\{\hat{\bar n}_N\}_{N\in \mathbb{N}}$ convergences to $\hat n_m$  in $L_{weak}^2(0,T;\h^1)$. Due to the embedding $L_{weak}^2(0,T;\h^1)\hookrightarrow L_{weak}^2(0,T;\elm^2)$, we derive that  $\{\hat{\bar n}_N\}_{N\in \mathbb{N}}$ convergences also to  $\hat n_m$ in  $L_{weak}^2(0,T;\elm^2)$ and due to the uniqueness of the limit, we derive that $\hat n_m=n_m$.
		\end{proof}
	\end{lemma}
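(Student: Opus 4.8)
The plan is to establish each of the five equalities by the same \emph{two-limit plus uniqueness-of-weak-limit} argument that the authors sketch for $\hat n_m=n_m$. The structural observation is that, along the chosen subsequence, every piecewise-constant interpolant carries two a priori distinct limits: a \emph{strong} $\elm^2$-in-time limit towards the continuous process $\bu_m$, $c_m$ or $n_m$ recorded in \re{s2.33}, and a \emph{weak} limit towards its own Skorokhod label ($\hat\bu_m$, $\check\bu_m$, $\hat c_m$, $\check c_m$, $\hat n_m$) recorded in \re{s2.29}. First I would note that a strong limit in $\elm^2(0,T;X)$ is in particular a weak limit in $L^2_{weak}(0,T;X)$; then I would push both convergences into one common weak-topology space and invoke uniqueness of the weak limit there.

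For the velocity, both $\hat{\bar\bu}_N$ and $\check{\bar\bu}_N$ converge strongly to $\bu_m$ in $\elm^2(0,T;\bh_m)$ by \re{s2.33}, hence weakly in $L^2_{weak}(0,T;\bh_m)$; by the last line of \re{s2.29} they converge weakly to $\hat\bu_m$, respectively $\check\bu_m$, in the \emph{same} space $L^2_{weak}(0,T;\bh_m)$, so uniqueness of the weak limit yields $\hat\bu_m=\bu_m$ and $\check\bu_m=\bu_m$ (here this is immediate since $\bh_m$ is finite-dimensional). For the chemical concentration, \re{s2.33} gives $\hat{\bar c}_N,\check{\bar c}_N\to c_m$ strongly in $\elm^2(0,T;\h^1)$, while \re{s2.29} gives $\hat{\bar c}_N\to\hat c_m$ and $\check{\bar c}_N\to\check c_m$ in $L^2_{weak}(0,T;D(A_1))$; using the continuous embedding $D(A_1)\hookrightarrow\h^1$ I would transport the latter convergences into $L^2_{weak}(0,T;\h^1)$, so that both limits live in $L^2_{weak}(0,T;\h^1)$ and uniqueness forces $\hat c_m=c_m$ and $\check c_m=c_m$. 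The identity $\hat n_m=n_m$ is exactly the case the authors treat, combining $\hat{\bar n}_N\to n_m$ strongly in $\elm^2(0,T;\elm^2)$ with $\hat{\bar n}_N\to\hat n_m$ in $L^2_{weak}(0,T;\h^1)\hookrightarrow L^2_{weak}(0,T;\elm^2)$.

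The only point requiring care is the bookkeeping of the embeddings: they must run $D(A_1)\hookrightarrow\h^1\hookrightarrow\elm^2$ so that the weakly convergent Skorokhod sequence can be reinterpreted in the coarser weak-topology space in which the strong limit already lives, together with the validity of uniqueness of weak limits, which holds because each $\elm^2(0,T;X)$ with $X$ a separable Hilbert space is reflexive. I do not anticipate any genuine obstacle here: the uniform bounds of Lemma \ref{Lemmas2.7} ensure that all relevant sequences are bounded in the stronger spaces, guaranteeing existence and identification of the weak limits, and no analysis beyond the convergences \re{s2.29}, \re{s2.33} and these elementary functional-analytic facts is needed.
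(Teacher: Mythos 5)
Your proposal is correct and follows essentially the same argument as the paper: pair the strong $\elm^2$-in-time convergence from \re{s2.33} with the weak convergence from \re{s2.29}, transport the latter through the continuous embeddings into the coarser $L^2_{weak}$ space, and conclude by uniqueness of weak limits. The paper carries this out only for $\hat n_m=n_m$ and declares the remaining cases similar, whereas you spell out all five; the substance is identical.
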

	Next, we prove that the error terms $	\bar \bec_N^\bu$, 	$	\bar \bec_N^c$ and  $	\bar \bec_N^n$ vanishes when $N$ goes to infinity. 
	\begin{lemma}\label{Lemmas2.12}
		We have
		\begin{equation*}
			\lim_{N\longrightarrow\infty}\bar \be\int_0^T\abs{\bar \bec_N^\bu(s)}_{0,2}^2ds= \lim_{N\longrightarrow\infty}\bar \be\int_0^T\abs{\bar \bec_N^c(s)}_{0,2}^2ds=0  \lim_{N\longrightarrow\infty}\bar \be\int_0^T\abs{\bar \bec_N^n(s)}_{(\h^1)'}^2ds=0.
		\end{equation*}	
	\end{lemma}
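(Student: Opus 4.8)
\begin{prev}[\textbf{Proof of Lemma \ref{Lemmas2.12}}]
The plan is to treat the three error terms in a single, unified way. Each of $\bar\bec_N^\bu$, $\bar\bec_N^c$ and $\bar\bec_N^n$ splits into pieces of two kinds: stochastic integrals over the short random interval $[t,t_{\ell_N(t)+1}]$ (or over $[0,h]$, pre-multiplied by the factor $\frac{h\wedge t}{h}\le1$), and time integrals that the indicator $1_{[0,h]}$ localises to $[0,h]$. Since $t_{\ell_N(t)+1}-t\le h$ and $h=T/N\to0$, the idea is to bound, at fixed $t$, the expected squared norm of each piece by $\bk_m h$, and then integrate in $t\in[0,T]$ to obtain a bound of order $\bk_m T h$, which vanishes as $N\to\infty$. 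Throughout, $\check{\bar\bu}_N$ and $\check{\bar c}_N$ are adapted to $\{\bar{\mathcal{F}}^N_t\}_{t\in[0,T]}$ and $\bar W^N,\bar\beta^N$ are Wiener processes for this filtration, so the It\^o calculus applies on the short interval.

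For the stochastic pieces I would apply the It\^o isometry. For instance,
\begin{equation*}
\bar\be\abs{\alpha\int_t^{t_{\ell_N(t)+1}}\pi_m(\Pl(\mathbf{f}(\check{\bar\bu}_N(s))))\,d\bar W^N(s)}_{0,2}^2=\alpha^2\bar\be\int_t^{t_{\ell_N(t)+1}}\abs{\pi_m(\Pl(\mathbf{f}(\check{\bar\bu}_N(s))))}_{\mathcal{L}_{HS}}^2\,ds.
\end{equation*}
Using that $\pi_m$ and $\Pl$ are orthogonal projections, that $\mathbf{f}(\bv)z=(\nabla\bv)z$ so that $\abs{\mathbf{f}(\bv)}_{\mathcal{L}_{HS}}^2=\abs{\nabla\bv}_{0,2}^2$, and the equivalence of all norms on the finite-dimensional space $\bh_m$, the right-hand side is bounded by $\bk(m)\bar\be\int_t^{t_{\ell_N(t)+1}}\abs{\check{\bar\bu}_N(s)}_{0,2}^2\,ds\le\bk_m h$ thanks to \re{s2.30}. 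The stochastic piece of $\bar\bec_N^c$ is handled identically once one controls $\abs{F^1_m(\check{\bar c}_N)g(\check{\bar c}_N)}_{\mathcal{L}_{HS}}^2$: here $\abs{\mathbf{g}_k}_{1,\infty}\le1$ together with the bound $(F^1_m(c))^2\abs{\nabla c}_{0,2}^2\le m^2$ coming from \re{s1.4} gives the uniform estimate $\bk m^2$, hence again a contribution of order $\bk_m h$. Because $\frac{h\wedge t}{h}\le1$, the pieces integrated over $[0,h]$ are no larger.

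For the deterministic pieces I would use the Cauchy--Schwarz inequality in time to gain the factor $(t\wedge h)\le h$, for example
\begin{equation*}
\abs{\int_0^t[\eta A\hat{\bar\bu}_N(s)+B^m(\check{\bar\bu}_N(s),\hat{\bar\bu}_N(s))-B_0^m(\theta_m(\hat{\bar n}_N(s)),\phi)]1_{[0,h]}(s)\,ds}_{0,2}^2\le h\int_0^h\abs{\eta A\hat{\bar\bu}_N(s)+B^m(\check{\bar\bu}_N(s),\hat{\bar\bu}_N(s))-B_0^m(\theta_m(\hat{\bar n}_N(s)),\phi)}_{0,2}^2\,ds.
\end{equation*}
The integrand is uniformly (in $N$) square-integrable over $\bar\Omega\times(0,T)$: on $\bh_m$ the operators $A$ and $B^m$ are controlled polynomially by $\abs{\cdot}_{0,2}$ via the equivalence of norms, while $\theta_m$ is bounded by \re{3.3}, so \re{s2.30} yields $\bar\be\int_0^h(\cdots)\,ds\le\bk_m$, and integrating in $t$ produces a bound of order $\bk_m T h$. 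The pieces of $\bar\bec_N^c$ and $\bar\bec_N^n$ are treated the same way; the only difference is that $\bar\bec_N^n$ is measured in $(\h^1)'$, which is in fact easier, since $A_1:\h^1\to(\h^1)'$ is bounded and $B_1(\cdot,\cdot)$ and $B_3(\theta_m(\cdot),\cdot)$ map boundedly into $(\h^1)'$ under the $\elm^2(0,T;\h^1)$-bounds already available from \re{s2.30}.

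The main obstacle is not any single estimate but the bookkeeping required to certify that every nonlinear term $B^m,B_1,B_2,B_3$ occurring in the integrands is genuinely uniformly square-integrable in $(N,\bar\omega,s)$. This is precisely where the $m$-dependent truncation $\theta_m$ (bounded, with bounded first derivative, by \re{3.3}) and the cut-off $F_m$ (bounded via \re{s1.4}) are indispensable: they convert the a priori estimates \re{s2.30} into uniform bounds on the nonlinearities. Once this is in place, collecting the two types of contributions and letting $h=T/N\to0$ yields the three stated limits, which completes the proof.
\end{prev}
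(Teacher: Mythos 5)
Your proposal is correct and follows essentially the same route as the paper: split each error term into the short-interval stochastic integrals and the $1_{[0,h]}$-localised time integrals, control the former via the It\^o isometry together with the bounds $\abs{\mathbf{f}(\bv)}_{\mathcal{L}_{HS}}^2=\abs{\nabla\bv}_{0,2}^2$, $\abs{\mathbf{g}_k}_{1,\infty}\leq 1$ and $(F^1_m(c))^2\abs{\nabla c}_{0,2}^2\leq m^2$ from \re{s1.4}, control the latter by Cauchy--Schwarz in time plus the uniform moment bounds \re{s2.30}, \re{3.3} and the equivalence of norms on $\bh_m$, and conclude with an $O(h)=O(T/N)$ bound. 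The only cosmetic difference is that the paper writes out the details for $\bar\bec_N^c$ and declares the $\bu$- and $n$-terms analogous, whereas you present all three uniformly.
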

\begin{proof}[	\textbf{Proof of Lemma \ref{Lemmas2.12}. }]
	 The first convergence is easy to obtain  because of the equivalence of norms on $\bh_m$, the inequalities of  \re{3.3} and the inequalities \re{s2.30}$_1$ and \re{s2.30}$_6$. We will only  focus on the proof of the second convergence. First, notice that thanks to the equivalence of norms on $\bh_m$,  \re{3.3},  \re{s2.30} and \re{s2.30}$_6$  it is not difficult to show that
	\begin{equation*}
		\begin{split}
			&\bar \be\int_0^T \abs{A_1\hat{	\bar c}_N(s) }^2_{0,2}ds\leq \bk_m,\ \bar \be\int_0^T \abs{B_2(\theta^{\eps_m}_m(\hat{	\bar n}_N(s)),\hat{	\bar c}_N(s))}^2_{0,2}ds\leq\bk_m\bar \be\sup_{0\leq s\leq T}\abs{\hat{	\bar c}_N(s)}^2_{0,2}\leq \bk_m,\\
			&\bar \be\int_0^T \abs{B_1(\hat{	\bar \bu}_N(s),  \hat{	\bar c}_N(s))}^2_{0,2}ds\leq\bk_m\left[\bar \be\sup_{0\leq s\leq T}\abs{\hat{	\bar \bu}_N(s)}^4_{0,2}\right]^\frac{1}{2}\left[\bar \be\left(\int_0^T\abs{\nabla\hat{	\bar c}_N(s)}^2_{0,2}\right)^2\right]^\frac{1}{2}\leq \bk_m.
		\end{split}
	\end{equation*}
	Thanks to these inequalities and the H\"older inequality, we easily infer that
	\begin{equation*}
		\begin{split}
			\bar \be\int_0^T \abs{\int_0^tA_1\hat{	\bar c}_N(s) 1_{[0,h]}ds}^2_{0,2}dt&\leq  h \bar \be\int_0^T\int_0^{t\wedge h} \abs{A_1\hat{	\bar c}_N(s) }^2_{0,2}dsdt\\
			&\leq  h \bar \be\int_0^T\int_0^{T} \abs{A_1\hat{	\bar c}_N(s) }^2_{0,2}dsdt\leq \bk_mh,
		\end{split}
	\end{equation*}
	and 
	\begin{equation*}
		\bar \be\int_0^T \abs{\int_0^tB_1(\hat{	\bar \bu}_N(s),  \hat{	\bar c}_N(s))1_{[0,h]}ds}^2_{0,2}dt+\bar \be\int_0^T \abs{\int_0^tB_2(\theta^{\eps_m}_m(\hat{	\bar n}_N(s)),\hat{	\bar c}_N(s)) 1_{[0,h]}ds}^2_{0,2}dt\leq \bk_m h.
	\end{equation*}
	By making use of the Fubini theorem, the It\^o  isometry we arrive at
	\begin{equation*}
		\begin{split}
			\bar \be\int_0^T \abs{\int_t^{t_{\ell_N(t)+1}}F^1_m(\check{	\bar c}_N)g( \check{	\bar c}_N)  d \bar \beta^N(s)}^2_{0,2}dt
			&\leq\int_0^T \bar \be  \int_t^{t_{\ell_N(t)+1}}(F^1_m(\check{	\bar c}_N(s)))^2\abs{\nabla \check{	\bar c}_N(s)}^2_{0,2}dsdt\\
			&\leq\int_0^T \bar \be  \int_{t_{\ell_N(t)}}^{t_{\ell_N(t)+1}}(F^1_m(\check{	\bar c}_N(s)))^2\abs{\nabla \check{	\bar c}_N(s)}^2_{0,2}dsdt\\
			&\leq m^2Th.
		\end{split}
	\end{equation*}
	Here, we used the fact that $\abs{\mathbf{g}_k}_{\sobm^{1,\infty}}\leq 1$,  $(F^1_m(\check{	\bar c}_N))^2\abs{\nabla \check{	\bar c}_N}^2_{0,2}\leq m^2$ (see \re{s1.4}) and  the fact that  by definition,  amongst the subdivision intervals of $[0, T ]$, $[t_{\ell_N(t)}, t_{\ell_N(t)+1}]$ is the first interval containing $t$. Analogously, we have 
	\begin{equation*}
		\bar \be\int_0^T \abs{\frac{h\wedge t}{h}\int_0^hF^1_m(\check{	\bar c}_N(s))\sum_{k=1}^3\mathbf{g}_k\cdot\nabla \check{	\bar c}_N(s)  d \bar \beta^N_k(s)}^2_{0,2}dt\leq \bk_mh.
	\end{equation*}
	By summing up the above inequality and using the fact that $h=\frac{T}{N}$, we infer that 
	\begin{equation*}
		\lim_{N\longrightarrow\infty}\bar \be\int_0^T\abs{\bar \bec_N^c(s)}_{0,2}^2ds\leq \lim_{N\longrightarrow\infty}\frac{\bk_mT}{N}=0,
	\end{equation*}
	and the second convergence on Lemma \ref{Lemmas2.12} follows.  The third convergence can be proved in a similar way  using the fact that  due to the equivalence of norms on $\bh_m$,  \re{3.3},  \re{s2.30} and \re{s2.30}$_6$ the following hold,
	\begin{equation*}
		\begin{split}
			&\bar \be\int_0^T \abs{A_1\hat{	\bar n}_N(s) }^2_{(\h^1)'}ds\leq \bar \be\int_0^T \abs{\nabla\hat{	\bar n}_N(s) }^2_{(\h^1)'}ds \leq \bk_m,\\
			& \bar \be\int_0^T \abs{B_3(\theta_m(\hat{	\bar n}_N(s)),\hat{	\bar c}_N(s))}^2_{(\h^1)'}ds\leq287(m+1)^2\bar \be\int_0^T\abs{\nabla\hat{	\bar c}_N(s)}^2_{0,2}\leq \bk_m,\\
			&\bar \be\int_0^T \abs{B_1(\hat{	\bar \bu}_N(s),  \hat{	\bar n}_N(s))}^2_{(\h^1)'}ds\leq\bk_m\left[\bar \be\sup_{0\leq s\leq T}\abs{\hat{	\bar \bu}_N(s)}^4_{0,2}\right]^\frac{1}{2}\left[\bar \be\left(\int_0^T\abs{\nabla\hat{	\bar n}_N(s)}^2_{0,2}\right)^2\right]^\frac{1}{2}\leq \bk_m.
		\end{split}
	\end{equation*}
	Thus, we omit the proof of the third convergence.
	
\end{proof}
	
	To complete this step, we need to pass to the limit in the other terms of \re{s2.35}. To this end, we will have three lemmas. The first is stated as follows.
	\begin{lemma}\label{Lemmas2.13}
		For every $\bv\in \bh_m$ and  $t\in [0,T]$, the following convergences hold $\bar{ \mathbb{P}}$-a.s.,
		\begin{equation*}
			\begin{split}
				&\lim_{N\longrightarrow\infty}(	\bar \bu_N(t),\bv)=(	\bu_m(t),\bv),\\
				&\lim_{N\longrightarrow\infty}\int_0^t(A\hat{	\bar \bu}_N(s),\bv)ds=\int_0^t(A\bu_m(s),\bv)ds,\\
				&\lim_{N\longrightarrow\infty}\int_0^t(B^m(\check{	\bar \bu}_N(s),\hat{	\bar \bu}_N(s)),\bv)ds=\int_0^t(B^m( \bu_m(s), \bu_m(s)),\bv)ds,\\
				&\lim_{N\longrightarrow\infty}\int_0^t(B_0^m(\theta_m(\hat{	\bar n}_N(s)),\phi),\bv)ds=\int_0^t(B_0^m(\theta_m( n_m(s)),\phi),\bv)ds.
			\end{split}
		\end{equation*}
		In addition, 
		\begin{equation*}
			\lim_{N\longrightarrow\infty}\int_0^t ( \pi_m( \Pl\mathbf{g}(\check{	\bar \bu}_N(s)))  ,\bv)d \bar W^N(s)=\int_0^t ( \pi_m( \Pl\mathbf{f}(\bu_m(s))) ,\bv)d \bar W(s),
		\end{equation*}
		\text{in probability in } $\elm^2(0,T;\mathbb{R})$.
	\end{lemma}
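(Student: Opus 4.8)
The plan is to establish each of the five convergences in Lemma \ref{Lemmas2.13} by exploiting the strong convergences in \re{s2.29} and \re{s2.33}, together with the uniform a priori estimates \re{s2.30} and the crucial identifications $\hat{\bar\bu}_N,\check{\bar\bu}_N\to\bu_m$, $\hat{\bar c}_N,\check{\bar c}_N\to c_m$, $\hat{\bar n}_N\to n_m$ obtained from Lemma \ref{lemma 3.17}. Since $\bh_m$ is finite-dimensional, all topologies on $\bh_m$ coincide, so the first convergence $(\bar\bu_N(t),\bv)\to(\bu_m(t),\bv)$ is immediate from the strong convergence $\bar\bu_N\to\bu_m$ in $C([0,T];\bh_m)$ in \re{s2.29}$_1$, which gives pointwise (in $t$) convergence in $\bh_m$.

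For the linear term involving $A$, I would write $\int_0^t(A\hat{\bar\bu}_N(s),\bv)\,ds=\int_0^t(\nabla\hat{\bar\bu}_N(s),\nabla\bv)\,ds$ after integration by parts, and use the $\elm^2(0,T;\bh_m)$-convergence of $\hat{\bar\bu}_N$ to $\bu_m$ (this is \re{s2.33}$_1$ combined with Lemma \ref{lemma 3.17}) together with the equivalence of norms on $\bh_m$ to pass to the limit via Cauchy-Schwarz. For the bilinear term $B^m(\check{\bar\bu}_N,\hat{\bar\bu}_N)$, I would exploit the decomposition
\begin{equation*}
B^m(\check{\bar\bu}_N,\hat{\bar\bu}_N)-B^m(\bu_m,\bu_m)=B^m(\check{\bar\bu}_N-\bu_m,\hat{\bar\bu}_N)+B^m(\bu_m,\hat{\bar\bu}_N-\bu_m),
\end{equation*}
estimate each piece in $\bh_m$ using the continuity of $B^m$ and the equivalence of norms (so that the $\bh_m$-norm controls all Sobolev norms appearing in $B$), and conclude using the $\elm^2(0,T;\bh_m)$ convergences of both $\check{\bar\bu}_N$ and $\hat{\bar\bu}_N$ to $\bu_m$ from \re{s2.33}$_1$ and the uniform bounds \re{s2.30}. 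The term $B_0^m(\theta_m(\hat{\bar n}_N),\phi)$ is handled by the Lipschitz property of $\theta_m$ (recorded after \re{3.3}) which, combined with the strong convergence $\hat{\bar n}_N\to n_m$ in $\elm^2(0,T;\elm^2)$ from \re{s2.33}$_3$, gives convergence of $\theta_m(\hat{\bar n}_N)\to\theta_m(n_m)$ in $\elm^2(0,T;\elm^2)$ and hence of the integral.

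The main obstacle will be the final stochastic-integral convergence, since one must pass to the limit in an It\^o integral where both the integrand $\pi_m(\Pl\mathbf{f}(\check{\bar\bu}_N))$ and the driving process $\bar W^N$ depend on $N$. The standard strategy is to convert this into a convergence in probability in $\elm^2(0,T;\mathbb{R})$, as the statement requests: one shows that $\pi_m(\Pl\mathbf{f}(\check{\bar\bu}_N))\to\pi_m(\Pl\mathbf{f}(\bu_m))$ in $\elm^2(0,T;\mathcal{L}_{HS}(\mathbb{R}^3;\mathbb{L}^2))$ using the strong convergence $\check{\bar\bu}_N\to\bu_m$ and the linearity of $\mathbf{f}$ (recall $\mathbf{f}(\bv)(z)=(\nabla\bv)z$ by Assumption \ref{assum1}(i), so $\mathbf{f}$ is linear in $\bv$ and the equivalence of norms on $\bh_m$ makes it Lipschitz there), while $\bar W^N\to\bar W$ in $C([0,T];\mathbb{R}^3)$. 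I would invoke a stochastic-integral convergence lemma of the type in \cite[Lemma 2.1]{Debussche} or the argument underlying \cite[Proposition 4.4]{Razafimandimby}, which guarantees that if the integrands converge in $\elm^2$ in probability and the Wiener processes converge uniformly with matching filtrations, then the stochastic integrals converge in probability in $\elm^2(0,T;\mathbb{R})$; the measurability/adaptedness needed here is precisely what was set up through the filtrations $\{\bar{\mathcal{F}}^N_t\}$ and the fact that $\bar W^N$ is an $\{\bar{\mathcal{F}}^N_t\}$-Wiener process. The care required in correctly aligning the filtrations and justifying the passage to the limit in the stochastic term is the technical heart of the proof; the deterministic convergences are comparatively routine consequences of the finite-dimensionality of $\bh_m$ and the strong convergences already established.
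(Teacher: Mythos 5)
Your proposal is correct and takes essentially the same route as the paper: the paper dismisses the first four convergences as standard consequences of the strong convergences \re{s2.29} and \re{s2.33} (citing \cite{Bre7}), and for the stochastic integral it does exactly what you describe—convergence of the integrands in probability in $\elm^2(0,T;\mathcal{L}_{HS})$ combined with the uniform convergence of $\bar W^N$, fed into the stochastic-integral convergence lemma \cite[Lemma 2.15]{Debuss}, by deferring to the proof of $Q_5$ in Lemma \ref{Lemmas2.14}. Your additional observation that the linearity of $\mathbf{f}$ and the equivalence of norms on $\bh_m$ make the integrand convergence immediate (no Vitali argument needed, unlike for the $F^1_m g$ term in $Q_5$) is precisely the simplification implicit in the paper's remark that the fifth convergence is ``very similar'' to that of $Q_5$.
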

	The proof of first four convergences are now standard, see for instance \cite{Bre7}. The proof of the $5^{th}$ convergence is very similar to the proof of the convergence $Q_5$ in Lemma \ref{Lemmas2.14}, hence we omit it.
	\begin{lemma}\label{Lemmas2.14}
		For every $\psi\in \elm^2$ and $t\in [0,T]$,   the following convergences hold $\bar{ \mathbb{P}}$-a.s.,
		\begin{equation*}
			\begin{split}
				&Q_1:=\lim_{N\longrightarrow\infty}\left[(	\bar c_N(t),\psi)-(	c_m(t),\psi)\right]=0,\\
				&Q_2:=\lim_{N\longrightarrow\infty}\left[\int_0^t(A_1\hat{	\bar c}_N(s),\psi)ds-\int_0^t(A_1c_m(s),\psi)ds\right]=0,\\
				&Q_3:=\lim_{N\longrightarrow\infty}\left[\int_0^t(B_1(\hat{	\bar \bu}_N(s),  \hat{	\bar c}_N(s)),\psi)ds-\int_0^t(B_1( \bu_m(s),   c_m(s)),\psi)ds\right]=0,\\
				&Q_4:=\lim_{N\longrightarrow\infty}\left[\int_0^t(B_2(\theta^{\eps_m}_m(\hat{	\bar n}_N(s)),\hat{\bar c}_N(s)),\psi)ds-\int_0^t(B_2(\theta^{\eps_m}_m(n_m(s)), c_m(s)),\psi)ds\right]=0.
			\end{split}
		\end{equation*}
		In addition, 
		\begin{equation*}
			Q_5:=\lim_{N\longrightarrow\infty}\left[\int_0^tF^1_m(\check{	\bar c}_N(s)(g( \check{	\bar c}_N(s)) ,\psi)d \bar \beta^N(s)-\int_0^tF^1_m(c_m(s)(g(c_m(s)) ,\psi)d \bar \beta^m(s)\right]=0,
		\end{equation*}
		\text{in probability in } $\elm^2(0,T;\mathbb{R})$.
	\end{lemma}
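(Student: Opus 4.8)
The plan is to prove each of the five convergences $Q_1$ through $Q_5$ by exploiting the strong convergences established in \re{s2.33} together with the uniform a priori bounds \re{s2.30}, passing from the interpolants $\hat{\bar c}_N,\check{\bar c}_N,\hat{\bar n}_N,\hat{\bar \bu}_N$ to the limits $c_m,n_m,\bu_m$ (using Lemma \ref{lemma 3.17} to identify all the auxiliary limits with the main ones). The convergences $Q_1$ and $Q_2$ are the softest: $Q_1$ follows directly from the almost-sure convergence $\bar c_N\to c_m$ in $C([0,T];(\h^1)')$ from \re{s2.29}$_2$, tested against $\psi\in\elm^2\subset\h^1$; for $Q_2$ I would integrate by parts to write $(A_1\hat{\bar c}_N,\psi)=(\nabla\hat{\bar c}_N,\nabla\psi)$ and use the weak $\elm^2(0,T;D(A_1))$ convergence of $\hat{\bar c}_N$ (equivalently the strong $\elm^2(0,T;\h^1)$ convergence from \re{s2.33}$_2$) after identifying $\hat{\bar c}_N\to c_m$.

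First I would treat $Q_3$, the convective term. I would split
\begin{equation*}
B_1(\hat{\bar\bu}_N,\hat{\bar c}_N)-B_1(\bu_m,c_m)=B_1(\hat{\bar\bu}_N-\bu_m,\hat{\bar c}_N)+B_1(\bu_m,\hat{\bar c}_N-c_m),
\end{equation*}
and estimate each piece against $\psi$ using the equivalence of norms on $\bh_m$ (so that $\abs{\bu}_{0,\infty}\leq\bk(m)\abs{\bu}_{0,2}$), the strong $\elm^2(0,T;\bh_m)$ convergence $\hat{\bar\bu}_N\to\bu_m$ and the strong $\elm^2(0,T;\h^1)$ convergence $\hat{\bar c}_N\to c_m$ from \re{s2.33}, together with the uniform bounds of \re{s2.30} to control the factors that are not converging. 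A Cauchy--Schwarz/H\"older argument in time then sends both pieces to zero. The term $Q_4$ is handled the same way, decomposing
\begin{equation*}
B_2(\theta_m^{\eps_m}(\hat{\bar n}_N),\hat{\bar c}_N)-B_2(\theta_m^{\eps_m}(n_m),c_m)
\end{equation*}
into a piece with $\theta_m^{\eps_m}(\hat{\bar n}_N)-\theta_m^{\eps_m}(n_m)=\theta_m(\hat{\bar n}_N)-\theta_m(n_m)$ and a piece with $\hat{\bar c}_N-c_m$; here the Lipschitz property of $\theta_m$ together with \re{3.3} (the uniform bound $\valabs{\theta_m}\leq 17(m+1)$) and the strong $\elm^2(0,T;\elm^2)$ convergence $\hat{\bar n}_N\to n_m$ from \re{s2.33}$_3$ give the result.

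The hard part will be $Q_5$, the stochastic integral, since the driving Brownian motions $\bar\beta^N$ themselves vary with $N$ and the integrands converge only in suitable $\elm^2$-type topologies. My plan is to invoke a martingale-representation / stochastic-integral convergence argument in the spirit of \cite{Bre7} and \cite{Razafimandimby}: I would first show that the integrand $F_m^1(\check{\bar c}_N)\,g(\check{\bar c}_N)$ converges to $F_m^1(c_m)\,g(c_m)$ in $\elm^2(\bar\Omega\times(0,T);\mathcal{L}_{HS}(\mathbb{R}^3;\elm^2))$, using the Lipschitz estimate \re{s1.5} for $F_m$, the bound \re{s1.4}, the assumption $\abs{\mathbf{g}_k}_{1,\infty}\leq 1$, and the strong convergence $\check{\bar c}_N\to c_m$ in $\elm^2(0,T;\h^1)$ (again via Lemma \ref{lemma 3.17} identifying $\check c_m=c_m$), together with a Vitali argument justified by the uniform moment bounds \re{s2.30}. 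Then, because $\bar\beta^N\to\bar\beta^m$ in $C([0,T];\mathbb{R}^3)$ $\bar{\mathbb{P}}$-a.s.\ (\re{s2.29}$_4$) and $\bar\beta^m$ is the limiting Brownian motion on the new filtered space, I would apply the It\^o isometry together with the convergence-of-stochastic-integrals lemma to conclude the convergence in probability in $\elm^2(0,T;\mathbb{R})$. The main obstacle is precisely ensuring that the integrand convergence is strong enough (in $L^2$ of the product space) to pass to the limit in the stochastic integral despite the $N$-dependent integrator; the boundedness $(F_m^1(\check{\bar c}_N))^2\abs{\nabla\check{\bar c}_N}_{0,2}^2\leq m^2$ coming from \re{s1.4} is the key uniform control that makes the Vitali/dominated-convergence step work.
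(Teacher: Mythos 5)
Your proposal follows essentially the same route as the paper: the identical decompositions $B_1(\hat{\bar\bu}_N-\bu_m,\hat{\bar c}_N)+B_1(\bu_m,\hat{\bar c}_N-c_m)$ for $Q_3$ and the Lipschitz/bounded splitting for $Q_4$, the soft arguments via \re{s2.29} and Lemma \ref{lemma 3.17} for $Q_1$ and $Q_2$, and for $Q_5$ the same two-step scheme (a.e.\ convergence of the integrand from \re{s1.5}, uniform control from \re{s1.4}, Vitali, then a convergence-of-stochastic-integrals lemma to handle the $N$-dependent integrator $\bar\beta^N$, which the paper takes from \cite[Lemma 2.15]{Debuss}). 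One small caveat: for $Q_2$ the integration by parts $(A_1\hat{\bar c}_N,\psi)=(\nabla\hat{\bar c}_N,\nabla\psi)$ is not available for $\psi\in\elm^2$, and strong $\elm^2(0,T;\h^1)$ convergence alone does not suffice either; the correct (and sufficient) tool is the one you also name, namely the weak $\elm^2(0,T;D(A_1))$ convergence of $\hat{\bar c}_N$ combined with the identification $\hat c_m=c_m$.
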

\begin{proof}[\textbf{Proof of Lemma \ref{Lemmas2.14}}]
		 Let us fix arbitrary $\psi\in \elm^2$.   The proof of the two convergences follow from   \re{s2.29}$_2$, \re{s2.29}$_5$ and Lemma \ref{lemma 3.17}. So, we omit the details
	
	By using the H\"older inequality, we get 
	\begin{equation*}
		\begin{split}
			Q_3&\leq  \lim_{N\longrightarrow\infty}\int _0^T\valabs{\int_0^t((\hat{	\bar \bu}_N(s)-\bu_m(s))\nabla\hat{	\bar c}_N(s),\psi)ds}+\lim_{N\longrightarrow\infty}\int _0^T\valabs{\int_0^t(\bu_m(s)\nabla(\hat{	\bar c}_N(s)-c_m(s)),\psi)ds}\\
			&\leq \abs{\psi}_{0,2} \lim_{N\longrightarrow\infty}\left( \int _0^T\abs{\hat{	\bar \bu}_N(s)-\bu_m(s)}^2_{0,\infty}ds\right)^\frac{1}{2}\left(\int _0^T\abs{\nabla\hat{	\bar c}_N(s)}^2_{0,2}ds\right)^\frac{1}{2}\\
			&\qquad+\abs{\psi}_{0,2} \lim_{N\longrightarrow\infty}\left( \int _0^T\abs{\bu_m(s)}^2_{0,\infty}ds\right)^\frac{1}{2}\left( \int _0^T\abs{\nabla(\hat{	\bar c}_N(s)-c_m(s))}^2_{0,2}ds\right)^\frac{1}{2}
			=0.
		\end{split}
	\end{equation*}
	Here, we have used the equivalence of norms on $\bh_m$,  the convergence \re{s2.33} and the fact that all weakly convergence sequences in Banach space is bounded. 
	
	Since  $\valabs{\theta^{\eps_m}_m(\hat{	\bar n}_N)-\theta^{\eps_m}_m( n_m)}= \valabs{\theta_m(\hat{	\bar n}_N)-\theta_m( n_m)}\leq \br_\theta\valabs{\hat{	\bar n}_N-n_m}$ (Lipschitz property of $\theta_m$) and  $\valabs{ \theta^{\eps_m}_m(n_m)}= \valabs{ \theta_m(n_m)+\eps_m}\leq m+1+\eps_m$ (see \re{3.3}) ,  the H\"older inequality yields
	\begin{equation*}
		\begin{split}
			Q_4&\leq  \lim_{N\longrightarrow\infty}\valabs{\int_0^t([\theta^{\eps_m}_m(\hat{	\bar n}_N(s))-\theta^{\eps_m}_m( n_m(s))]\hat{	\bar c}_N(s),\psi)ds}\\
			&\qquad+\lim_{N\longrightarrow\infty} \valabs{\int_0^t(\theta^{\eps_m}_m(n_m(s))[\hat{	\bar c}_N(s)-c_m(s)],\psi)ds}\\
			&\leq \br_\theta\abs{\psi}_{0,2} \lim_{N\longrightarrow\infty}\left(\int _0^T\abs{\hat{	\bar n}_N(s)-n_m(s)}^2_{0,2}ds\right)^\frac{1}{2}\left( \int _0^T\abs{\hat{	\bar c}_N(s)}^2_{0,\infty}ds\right)^\frac{1}{2}\\
			&\qquad+(m+1+\eps_m)\abs{\psi}_{0,2} \lim_{N\longrightarrow\infty}\left( \int _0^T\abs{\hat{	\bar c}_N(s)-c_m(s)}^2_{0,2}ds\right)^\frac{1}{2}=0.
		\end{split}
	\end{equation*}
	In the last line, we used the fact that  $D(A_1)\hookrightarrow \elm^\infty$, the convergence \re{s2.33} and the boundedness due to the weak convergence.  
	
	To prove that $Q_5=0$, we note that  up to a subsequence
	$
	\abs{\hat{	\bar c}_N(s)-c_m(s)}_{1,2}\to 0$, $\text{ a.e. } s\in[0,T]. 
	$
	By the H\"older inequality and \re{s1.5}, we note that  by using  $\valabs{F^1_m(c_m)}\leq 1$ we see that 
	\begin{equation*}
		\begin{split}
			&\sum_{k=1}^3\valabs{F^1_m(\check{	\bar c}_N)(\mathbf{g}_k\cdot\nabla \check{	\bar c}_N ,\psi)-F^1_m(c_m)(\mathbf{g}_k\cdot\nabla c_m ,\psi)}\\
			&\leq \abs{\psi}_{0,2}\sum_{k=1}^3\valabs{F^1_m(\check{	\bar c}_N)\mathbf{g}_k\cdot\nabla \check{	\bar c}_N-F^1_m(c_m)\mathbf{g}_k\cdot\nabla c_m }_{0,2}\\
			&\leq \abs{\psi}_{0,2}\sum_{k=1}^3\valabs{F^1_m(\check{	\bar c}_N)-F^1_m(c_m)}\abs{\mathbf{g}_k\cdot\nabla \check{	\bar c}_N }_{0,2}+\abs{\psi}_{0,2}\sum_{k=1}^3\valabs{F^1_m(c_m)}\abs{\mathbf{g}_k\cdot\nabla \check{	\bar c}_N-\mathbf{g}_k\cdot\nabla c_m }_{0,2}\\
			&\leq \abs{\psi}_{0,2}\sum_{k=1}^3\frac{\valabs{\abs{\check{	\bar c}_N}_{1,2}-\abs{c_m}_{1,2}}}{\abs{c_m}_{1,2}+1}\sum_{k=1}^3\abs{\mathbf{g}_k}_{0,\infty}\abs{\nabla \check{	\bar c}_N }_{0,2}+\abs{\psi}_{0,2}\sum_{k=1}^3\valabs{F^1_m(c_m)}\abs{\mathbf{g}_k}_{0,\infty}\abs{\nabla \check{	\bar c}_N-\nabla c_m }_{0,2}\\
			&\leq \abs{\psi}_{0,2}\sum_{k=1}^3\abs{\check{	\bar c}_N-c_m}_{1,2}\abs{\mathbf{g}_k}_{0,\infty}\abs{\check{	\bar c}_N }_{1,2}+\abs{\psi}_{0,2}\sum_{k=1}^3\abs{\mathbf{g}_k}_{0,\infty}\abs{\check{	\bar c}_N- c_m }_{1,2}.
		\end{split}
	\end{equation*}
%	In the last line, we use the fact that 
	Since $\abs{\check{	\bar c}_N}_{1,2}\leq \abs{\check{	\bar c}_N-c_m}_{1,2}+\abs{c_m}_{1,2}$, we can pass to the limit in the above inequality  and derive that 
	\begin{equation*}
		\lim_{N\longrightarrow\infty}\valabs{F^1_m(\check{	\bar c}_N(s))(g( \check{	\bar c}_N(s)) ,\psi)-F^1_m(c_m(s))(g( c_m(s)) ,\psi)}_{\mathcal{L}_{HS}(\mathbb{R}^3;\mathbb{R})}=0, 
	\end{equation*}
	for almost every  $s\in [0,T]$.  In addition, by using the H\"older inequality and the second inequality of  \re{s1.4} we get
	\begin{equation*}
		\begin{split}
			&\int_0^T\abs{F^1_m(\check{	\bar c}_N(s))(g( \check{	\bar c}_N(s)) ,\psi)-F^1_m(c_m(s))(g( c_m(s)) ,\psi)}_{\mathcal{L}_{HS}(\mathbb{R}^3;\mathbb{R})}^4ds\\
			&\leq 8\abs{\psi}^4_{0,2}\sum_{k=1}^3\abs{\mathbf{g}_k}^4_{0,\infty}\int_0^T(F^1_m(\check{	\bar c}_N(s)))^4\abs{\nabla \check{	\bar c}_N(s)}^4_{0,2}ds\\
			&\qquad+8\abs{\psi}^4_{0,2}\sum_{k=1}^3\abs{\mathbf{g}_k}^4_{0,\infty}\int_0^T(F^1_m( c_m(s)))^4\abs{\nabla  	c_m(s)}^4_{0,2}ds\\
			&\leq 16\abs{\psi}^4_{0,2}Tm^4\sum_{k=1}^3\abs{\mathbf{g}_k}^4_{0,\infty}.
		\end{split}
	\end{equation*}
	With this uniform integrability and almost everywhere convergence in hand,  we apply the Vitali convergence theorem and  derive that 
	\begin{equation*}
		\lim_{N\longrightarrow\infty}\int_0^T\abs{F^1_m(\check{	\bar c}_N(s))(g( \check{	\bar c}_N(s)) ,\psi)-F^1_m(c_m(s))(g( c_m(s)) ,\psi)}_{\mathcal{L}_{HS}(\mathbb{R}^3;\mathbb{R})}^2ds=0,  \ \bar{ \mathbb{P}}\text{-a.s.}.
	\end{equation*}
	In particular,
	\begin{equation*}
		\lim_{N\longrightarrow\infty}F^1_m(\check{	\bar c}_N(\cdot))(g( \check{	\bar c}_N(\cdot)) ,\psi)-F^1_m(c_m(\cdot))(g( c_m(\cdot)) ,\psi)=0, \text{ in probability in } \elm^2(0,T;\mathcal{L}_{HS}(\mathbb{R}^3;\mathbb{R})).
	\end{equation*}
	Since the convergence \re{s2.29}$_4$ holds, we apply   \cite[Lemma 2.15]{Debuss} and end with the proof of Lemma \ref{Lemmas2.14}.
\end{proof}
 Now, we give the last convergence result of this step.
	\begin{lemma}\label{Lemmas2.15}
		For every $\varphi\in \h^1$ and $t\in [0,T]$, the following convergences hold $\bar{ \mathbb{P}}$-a.s.,
		\begin{equation*}
			\begin{split}
				&Q_6:=\lim_{N\longrightarrow\infty}\left[(\bar n_N(t),\varphi)-(n_m(t),\varphi)\right]=0,\\
				&Q_7:=\lim_{N\longrightarrow\infty}\left[\int_0^t\langle A_1\hat{	\bar n}_N(s),\varphi\rangle ds-\int_0^t\langle A_1n_m(s),\varphi \rangle ds\right]=0,\\
				&Q_8:=\lim_{N\longrightarrow\infty}\left[\int_0^t\langle B_1(\hat{	\bar \bu}_N(s),  \hat{	\bar n}_N(s)),\varphi\rangle ds-\int_0^t\langle B_1( \bu_m(s),   n_m(s)),\varphi\rangle ds\right]=0,\\
				&Q_9:=\lim_{N\longrightarrow\infty}\left[\int_0^t\langle B_3(\theta_m(\hat{	\bar n}_N(s)),\hat{\bar c}_N(s)),\varphi\rangle ds-\int_0^t\langle B_3(\theta_m(n_m(s)),  c_m(s)),\varphi \rangle ds\right]=0.
			\end{split}
		\end{equation*}
	\end{lemma}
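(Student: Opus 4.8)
The plan is to fix $\varphi\in\h^1$ and $t\in[0,T]$ and treat $Q_6$--$Q_9$ one at a time, relying throughout on the $\bar{\mathbb{P}}$-almost sure convergences \re{s2.29} and \re{s2.33}, the identifications of Lemma \ref{lemma 3.17}, the properties \re{3.3} of $\theta_m$, and the equivalence of all norms on the finite-dimensional space $\bh_m$. The first two convergences are immediate. For $Q_6$, the convergence $\bar n_N\to n_m$ in $C([0,T];(\h^1)')$ from \re{s2.29}$_3$ gives $\bar n_N(t)\to n_m(t)$ in $(\h^1)'$, whence the pairing with $\varphi$ converges. For $Q_7$, I write $\langle A_1 n,\varphi\rangle=(\nabla n,\nabla\varphi)$ using that $A_1$ is the Neumann Laplacian, and then use $\hat{\bar n}_N\to\hat n_m=n_m$ in $\elm^2_{weak}(0,T;\h^1)$ (from \re{s2.29}$_4$ and Lemma \ref{lemma 3.17}); testing against the fixed element $\nabla\varphi\,1_{[0,t]}\in\elm^2(0,T;\elm^2)$ passes to the limit by definition of weak convergence.

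For the transport term $Q_8$, the plan is to split $B_1(\hat{\bar\bu}_N,\hat{\bar n}_N)-B_1(\bu_m,n_m)=B_1(\hat{\bar\bu}_N-\bu_m,\hat{\bar n}_N)+B_1(\bu_m,\hat{\bar n}_N-n_m)$. For the first summand I bound $\valabs{((\hat{\bar\bu}_N-\bu_m)\cdot\nabla\hat{\bar n}_N,\varphi)}\leq\bk(m)\abs{\hat{\bar\bu}_N-\bu_m}_{0,2}\abs{\nabla\hat{\bar n}_N}_{0,2}\abs{\varphi}_{0,2}$ via the equivalence of norms on $\bh_m$, integrate over $[0,T]$ with the Cauchy--Schwarz inequality, and conclude from the strong convergence $\int_0^T\abs{\hat{\bar\bu}_N-\bu_m}_{0,2}^2ds\to0$ (\re{s2.33}) together with the bound on $\int_0^T\abs{\nabla\hat{\bar n}_N}_{0,2}^2ds$ in \re{s2.30}$_3$. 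The only subtlety is the second summand: since $\hat{\bar n}_N\to n_m$ only \emph{weakly} in $\elm^2(0,T;\h^1)$, I cannot pass to the limit in $\nabla(\hat{\bar n}_N-n_m)$ directly; instead I integrate by parts, using $\nabla\cdot\bu_m=0$ and $\bu_m|_{\partial\bo}=0$, to get $(\bu_m\cdot\nabla(\hat{\bar n}_N-n_m),\varphi)=-(\bu_m(\hat{\bar n}_N-n_m),\nabla\varphi)$, which is controlled by $\bk(m)\abs{\bu_m}_{0,2}\abs{\hat{\bar n}_N-n_m}_{0,2}\abs{\nabla\varphi}_{0,2}$ and vanishes by the \emph{strong} $\elm^2(0,T;\elm^2)$ convergence $\hat{\bar n}_N\to n_m$ in \re{s2.33}.

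The main obstacle is $Q_9$, the chemotactic term $B_3$, since it couples a gradient product with the nonlinearity $\theta_m$. The key structural step is to rewrite, using the Neumann condition $\frac{\partial c}{\partial\nu}=0$ on $\partial\bo$, the pairing as $\langle B_3(\theta_m(n),c),\varphi\rangle=(\theta_m(n)\nabla c,\nabla\varphi)$, and then split $\theta_m(\hat{\bar n}_N)\nabla\hat{\bar c}_N-\theta_m(n_m)\nabla c_m=[\theta_m(\hat{\bar n}_N)-\theta_m(n_m)]\nabla\hat{\bar c}_N+\theta_m(n_m)[\nabla\hat{\bar c}_N-\nabla c_m]$. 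For the second piece I use $\valabs{\theta_m(n_m)}\leq 17(m+1)$ (\re{3.3}) and the strong convergence $\hat{\bar c}_N\to c_m$ in $\elm^2(0,T;\h^1)$ (\re{s2.33}), so that $\valabs{(\theta_m(n_m)[\nabla\hat{\bar c}_N-\nabla c_m],\nabla\varphi)}\leq 17(m+1)\abs{\nabla\varphi}_{0,2}\abs{\nabla\hat{\bar c}_N-\nabla c_m}_{0,2}$ integrates to $0$. For the first piece I extract from the strong $\elm^2(0,T;\elm^2)$ convergence of $\hat{\bar n}_N$ a subsequence converging a.e.\ on $(0,T)\times\bo$; by continuity and boundedness of $\theta_m$ and dominated convergence, $\theta_m(\hat{\bar n}_N)-\theta_m(n_m)\to0$ strongly in $\elm^2(0,T;\elm^3)$, while $\nabla\hat{\bar c}_N$ is bounded in $\elm^2(0,T;\elm^6)$ (via $\h^1\hookrightarrow\elm^6$, the bound on $\int_0^T\abs{A_1\hat{\bar c}_N}_{0,2}^2ds$ in \re{s2.30}$_5$, and $\bar{\mathbb{P}}$-a.s.\ boundedness coming from the weak convergence \re{s2.29}$_5$). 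Hölder's inequality with exponents $(3,6,2)$ then gives $\valabs{([\theta_m(\hat{\bar n}_N)-\theta_m(n_m)]\nabla\hat{\bar c}_N,\nabla\varphi)}\leq\abs{\nabla\varphi}_{0,2}\abs{\theta_m(\hat{\bar n}_N)-\theta_m(n_m)}_{0,3}\abs{\nabla\hat{\bar c}_N}_{0,6}$, and integrating in time forces this term to $0$, so $Q_9=0$.

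I expect the delicate part to be precisely this $Q_9$ bookkeeping: the integration by parts that converts $B_3$ into the divergence form $(\theta_m(n)\nabla c,\nabla\varphi)$, the dominated-convergence upgrade of $\theta_m(\hat{\bar n}_N)$ from $\elm^2(0,T;\elm^2)$ to $\elm^2(0,T;\elm^3)$, and the matching of the Hölder exponents against the $\elm^2(0,T;\elm^6)$ bound on $\nabla\hat{\bar c}_N$ inherited from the $D(A_1)$ estimate. Since the limits are identified uniquely, convergence along the extracted subsequence yields convergence of the whole sequence; all remaining manipulations mirror those already performed for $Q_3$ and $Q_4$ in Lemma \ref{Lemmas2.14}, so I would only sketch them.
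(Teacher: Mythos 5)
Your treatment of $Q_6$, $Q_7$ and $Q_8$ coincides with the paper's: the first two follow from the almost sure convergences \re{s2.29} (and, for $Q_7$, Lemma \ref{lemma 3.17}), and for $Q_8$ the paper likewise integrates by parts (using that $\hat{\bar\bu}_N$, $\bu_m$ are divergence free and vanish on $\partial\bo$) precisely because $\hat{\bar n}_N\to n_m$ is only strong in $\elm^2(0,T;\elm^2)$, then concludes as in $Q_3$. For $Q_9$ both you and the paper pass to the divergence form $(\theta_m(n)\nabla c,\nabla\varphi)$ and use the same splitting, and you handle the piece $\theta_m(n_m)\nabla(\hat{\bar c}_N-c_m)$ identically; the genuine difference is the piece $[\theta_m(\hat{\bar n}_N)-\theta_m(n_m)]\nabla\hat{\bar c}_N$. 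The paper argues quantitatively: Lipschitz continuity of $\theta_m$ gives $\valabs{\theta_m(\hat{\bar n}_N)-\theta_m(n_m)}\leq \br_\theta\valabs{\hat{\bar n}_N-n_m}$, and the Gagliardo--Nirenberg interpolation $\abs{\cdot}_{0,3}\leq \bk\abs{\cdot}_{0,2}^{1/2}\abs{\cdot}_{1,2}^{1/2}$ converts the strong $\elm^2(0,T;\elm^2)$ convergence plus the $\elm^2(0,T;\h^1)$ boundedness into convergence of the $\elm^3$ factor, with no subsequence extraction. You instead argue softly: a.e.\ convergence along a subsequence, boundedness of $\theta_m$, and dominated convergence give $\theta_m(\hat{\bar n}_N)\to\theta_m(n_m)$ in $\elm^2(0,T;\elm^3)$, and the full sequence is recovered by the subsequence-uniqueness principle. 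Both routes match the same $(3,6,2)$ H\"older structure against $\nabla\hat{\bar c}_N$ bounded in $\elm^2(0,T;\elm^6)$; yours needs only continuity and boundedness of $\theta_m$ (not its Lipschitz constant), while the paper's keeps the whole sequence and yields an explicit rate in terms of $\abs{\hat{\bar n}_N-n_m}_{\elm^2(0,T;\elm^2)}^{1/2}$. One small repair to your write-up: in $Q_8$ (and wherever you need an $N$-uniform bound valid $\bar{\mathbb{P}}$-a.s.) you should cite the a.s.\ boundedness of $\int_0^T\abs{\hat{\bar n}_N}_{1,2}^2\,ds$ coming from the weak convergence \re{s2.29}$_4$ (weakly convergent sequences are bounded), not the moment bound \re{s2.30}$_3$, since a bound in expectation uniform in $N$ does not by itself give an a.s.\ bound uniform in $N$; you invoke exactly the right argument for $Q_9$, and the paper does the same for $Q_3$.
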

	\begin{proof}[\textbf{Proof of Lemma \ref{Lemmas2.15}}]
		 Let us fix arbitrary $\varphi\in \h^1$ and $t\in [0,T]$. The first and the second  convergences  come from \ref{s2.29}.  Since $\hat{	\bar \bu}_N$ and $\bu_m$ are live in $\bh_m$, they satisfy the Dirichlet  boundary condition. So, by making use an integration-by-part, the H\"older inequality,  the equivalence of norms on $\bh_m$ and the convergence \re{s2.33}, we get  $Q_8=0$ in a quite similar way as for the proof of $Q_3=0$.
	%	\begin{equation*}
		%	\begin{split}
			%	Q_8&\leq  \lim_{N\longrightarrow\infty}\abs{\int_0^t((\hat{	\bar \bu}_N(s)-\bu_m(s))\hat{	\bar n}_N(s),\nabla\varphi)ds}+\lim_{N\longrightarrow\infty} \abs{\int_0^t(\bu_m(s)(\hat{	\bar n}_N(s)-n_m(s)),\nabla\varphi)ds}\\
			%			&\leq \abs{\nabla\varphi}_{0,2} \lim_{N\longrightarrow\infty}\left( \int _0^T\abs{\hat{	\bar \bu}_N(s)-\bu_m(s)}^2_{0,\infty}ds\right)^\frac{1}{2}\left( \int _0^T\abs{\hat{	\bar n}_N(s)}^2_{0,2}ds\right)^\frac{1}{2}\\
			%			&\qquad+ \abs{\nabla\varphi}_{0,2} \lim_{N\longrightarrow\infty}\left( \int _0^T\abs{\bu_m(s)}^2_{0,\infty}ds\right)^\frac{1}{2}\left( \int _0^T\abs{\hat{	\bar n}_N(s)-n_m(s)}^2_{0,2}ds\right)^\frac{1}{2}\\
			%			&=0.
			%		\end{split}
		%	\end{equation*}
	By using the Lipschitz property of $\theta_m$,  the fact that $\abs{ \theta_m(n_m)}\leq 17(m+1)$,   the H\"older inequality the embedding $\h^1\hookrightarrow \elm^6$, the interpolation inequality, and the convergence \re{s2.33}, we obtain
	\begin{equation*}
		\begin{split}
			Q_9&\leq  \lim_{N\longrightarrow\infty} \valabs{\int_0^t([\theta_m(\hat{	\bar n}_N(s))-\theta_m( n_m(s))]\nabla\hat{\bar c}_N(s),\nabla\varphi)ds}\\
			&\qquad+\lim_{N\longrightarrow\infty}\valabs{\int_0^t(\theta_m(n_m(s)\nabla(\hat{	\bar c}_N(s)-c_m(s)),\nabla\varphi)ds}\\
			&\leq \bk\abs{\nabla\varphi}_{0,2} \lim_{N\longrightarrow\infty}\left(  \int _0^T\abs{\hat{	\bar n}_N(s)-n_m(s)}_{0,3}\abs{\nabla\hat{\bar c}_N(s)}_{0,6}ds\right)^\frac{1}{2}\\
			&\qquad+17T^\frac{1}{2}(m+1)\bk\abs{\nabla\varphi}_{0,2} \lim_{N\longrightarrow\infty}\left(\int _0^T\abs{\nabla(\hat{	\bar c}_N(s)-c_m(s))}^2_{0,2}ds\right)^\frac{1}{2}\\
			&\leq  \bk\lim_{N\longrightarrow\infty}\left( \int _0^T\abs{\hat{	\bar n}_N(s)-n_m(s)}^2_{0,2}ds\right)^\frac{1}{4}\left( \int _0^T\abs{\hat{	\bar n}_N(s)-n_m(s)}^2_{1,2}ds\right)^\frac{1}{4}\left(\int _0^T\abs{\hat{\bar c}_N(s)}^2_{2,2}ds\right)^\frac{1}{2}\\
			&=0,
		\end{split}
	\end{equation*}
	which completes  the proof of Lemma \ref{Lemmas2.15}. 
	\end{proof}
We can now complete the proof of the Proposition  \ref{lem3.2}. 	In fact,		
	by using Lemma \ref{Lemmas2.12}, Lemma \ref{Lemmas2.13}, Lemma \ref{Lemmas2.14} and Lemma \ref{Lemmas2.15}, we derive that 
	$$(\bar {\Omega},\bar{\mathcal{F}},\{\bar {\mathcal{F}}^m_t\}_{t\in [0,T]},\bar{ \mathbb{P}}, (\bu_m,c_m,n_m), (\bar W^m, \bar \beta^m))$$
	is a martingale solution of the globally modified  semi-discretization system \re{3.2} in the sense of Definition \ref{defi3.2}. The regularity and integrability stated in Proposition \ref{lem3.2} follows from  the embeddings $C([0,T],\h^1_{weak})\subset \elm^\infty(0,T;\h^1)$ and $C([0,T],\elm^2_{weak})\subset \elm^\infty(0,T;\elm^2)$, and  \re{s2.31} and \re{s2.29}.

\vspace{0.5cm}

%\subsection*{Data availability} No data was used for the research described in the article.

%\section*{Declarations}

\subsection*{Funding} Boris Jidjou Moghomye and Erika Hausenblas have been supported by  the Austrian Science Fund (FWF) under project number PAT 3076923, Grant DOI: 10.55776/PAT3076923. 

%\subsection*{Conflict of interest} Beyond the above-mentioned financial support, the authors certify that there are no conflict of interest for this work.

\subsection*{Acknowledgments}
We acknowledge financial support provided by the Austrian Science Fund (FWF). In particular, Boris Jidjou Moghomye  and partially Erika Hausenblas   were supported by the Austrian Science Fund, project no.: PAT 3076923, Grant DOI: 10.55776/PAT3076923. This work was initiated during the visit research of Boris Jidjou Moghomye to the  Dublin City University during  July 2024. He would like to thanks the School of Mathematical Science for hospitality.

\begin{center}
	
\end{center}
\end{document}